\newcommand{\seq}{\subseteq}
\newcommand{\noit}[1]{\noindent\textit{#1}}
\newcommand{\claim}{\hfill$\dashv_{\text{\scriptsize{claim}}}$}
\newcommand{\udelta}{\underline{\delta}}
\newcommand{\smd}{\raisebox{.75pt}{\textrm{\scriptsize{~\!$\triangle$\!~}}}}
\newcommand{\nv}{\text{-}}
\newcommand{\mand}{\makebox[.4in]{and}}
\newcommand{\inv}{^{\nv 1}}
\newcommand{\minf}{\underline{\mathfrak{m}}}
\newcommand{\msup}{\overline{\mathfrak{m}}}
\newcommand{\mfrak}{\mathfrak{m}}
\newcommand{\abar}{\bar{a}}
\newcommand{\bbar}{\bar{b}}
\newcommand{\cbar}{\bar{c}}
\newcommand{\mbar}{\bar{m}}
\newcommand{\nbar}{\bar{n}}
\newcommand{\rbar}{\bar{r}}
\newcommand{\ubar}{\bar{u}}
\newcommand{\vbar}{\bar{v}}
\newcommand{\xbar}{\bar{x}}
\newcommand{\ybar}{\bar{y}}
\newcommand{\zbar}{\bar{z}}
\def\C{\mathbb C}
\def\N{\mathbb N}
\def\Q{\mathbb Q}
\def\R{\mathbb R}
\def\Z{\mathbb Z}
\newcommand{\cA}{\mathcal{A}}
\newcommand{\cB}{\mathcal{B}}
\newcommand{\cL}{\mathcal{L}}
\newcommand{\cM}{\mathcal{M}}
\newcommand{\cN}{\mathcal{N}}
\newcommand{\cZ}{\mathcal{Z}}
\def\Th{\operatorname{Th}}
\def\indd{\operatorname{ind}}
\def\ext{\operatorname{ext}}
\def\acl{\operatorname{acl}}
\def\tp{\operatorname{tp}}
\def\Fac{\operatorname{Fac}}
\def\Fib{\operatorname{Fib}}
\def\ap{\operatorname{ap}}
\newtheorem{theorem}{Theorem}
\newtheorem{lemma}[theorem]{Lemma}
\newtheorem{corollary}[theorem]{Corollary}
\newtheorem{proposition}[theorem]{Proposition}
\newtheorem{fact}[theorem]{Fact}
\newtheorem{alphatheorem}{Theorem}
\newtheorem{alphacorollary}[alphatheorem]{Corollary}
\theoremstyle{definition}
\newtheorem{definition}[theorem]{Definition}
\newtheorem{convention}[theorem]{Convention}
\newtheorem{example}[theorem]{Example}
\newtheorem{remark}[theorem]{Remark}
\newtheorem{question}[theorem]{Question}
\newtheorem{problem}[theorem]{Problem}
\title{Stability and sparsity in sets of natural numbers}
\author{Gabriel Conant}
\address{Department of Mathematics\\
University of Notre Dame\\
Notre Dame, IN, 46656, USA}
\email{gconant@nd.edu}
\begin{document}

\begin{abstract}

Given a set $A\seq\N$, we consider the relationship between stability of the structure $(\Z,+,0,A)$ and sparsity of the set $A$. We first show that a strong enough sparsity assumption on $A$ yields stability of $(\Z,+,0,A)$. Specifically, if there is a function $f\colon A\to\R^+$ such that $\sup_{a\in A}|a-f(a)|<\infty$ and $\{\frac{s}{t}:s,t\in f(A),~t\leq s\}$ is closed and discrete, then $(\Z,+,0,A)$ is superstable (of $U$-rank $\omega$ if $A$ is infinite). Such sets include examples considered by Palac\'{i}n and Sklinos \cite{PaSk} and Poizat \cite{PoZ}, many classical linear recurrence sequences (e.g. the Fibonaccci numbers), and any set in which the limit of ratios of consecutive elements diverges. Finally, we consider sparsity conclusions on sets $A\seq\N$, which follow  from model theoretic assumptions on $(\Z,+,0,A)$. We use a result of Erd\H{o}s, Nathanson, and S\'{a}rk\"{o}zy \cite{ENS} to show that if $(\Z,+,0,A)$ does not define the ordering on $\Z$, then the lower asymptotic density of any finitary sumset of $A$ is zero. Finally, in a theorem communicated to us by Goldbring, we use a result of Jin \cite{JinBD} to show that if $(\Z,+,0,A)$ is stable, then the upper Banach density of any finitary sumset of $A$ is zero.

\end{abstract}

\maketitle

\section{Introduction}
\setcounter{theorem}{0}
\numberwithin{theorem}{section}

The group of integers $(\Z,+,0)$ is a standard example of a very well-behaved superstable group. The structure of definable sets in $(\Z,+,0)$ is completely understood  (see Fact \ref{fact:Z}$(d)$), and generalizes to the model theoretic study of modules and $1$-based groups, which accounts for some of the earliest work in stability theory and model theory (see \cite{PiGST}, \cite{Prestbook}). On other hand, until recently very little was known about stable \emph{expansions} of $(\Z,+,0)$. Indeed, it was unknown if $(\Z,+,0)$ had any proper stable expansions, until 2014 when Palac\'{i}n and Sklinos \cite{PaSk} and Poizat \cite{PoZ} independently gave the first examples. Specifically, these authors considered the expansion of $(\Z,+,0)$ by a unary predicate for the powers of a fixed integer $q\geq 2$, which was shown to be superstable of $U$-rank $\omega$. Palac\'{i}n and Sklinos proved the same conclusion for other examples including the expansion by a predicate for the factorial numbers (see Fact \ref{fact:exPaSk}).

In this paper, we initiate a general study of stable expansions of $(\Z,+,0)$ obtained by adding a unary predicate for some distinguished subset $A\seq\Z$ (this expansion is denoted $\cZ_A:=(\Z,+,0,A)$). Thus our work is in the domain of following general problem (attributed to Goodrick in \cite{PaSk}).

\begin{problem}
Characterize the subsets $A\seq\Z$ such that $\Th(\cZ_A)$ is stable.
\end{problem}

At present, there is little evidence that a singular characterization of such sets should exist. It seems more likely that the resolution of this problem will manifest as a classification program in which naturally defined families of sets $A\seq\Z$, for which $\Th(\cZ_A)$ is stable, are isolated and studied. The aim of this paper is to begin this classification program. Our results will establish that for subsets $A\seq \N$ (or, more broadly, subsets of $\Z$ with either an upper bound or a lower bound), stability of $\Th(\cZ_A)$ is intimately tied to combinatorial and number theoretic properties of $A$. In particular, we first define a general robust condition on the asymptotic behavior of subsets $A\seq\N$, which encompasses the examples mentioned above and is sufficient to deduce stability of $\Th(\cZ_A)$. We then show that for $A\seq\N$, stability of $\Th(\cZ_A)$ implies that $A$ must be quite sparse with respect to asymptotic density. 

We now state our first main result, which deals with the first of the two tasks described above. A set $A\seq\Z$ is defined to be \emph{geometrically sparse} if there is  $f\colon A\to\R^+$ such that $\sup_{a\in A}|a-f(a)|<\infty$ and the set $\{\frac{s}{t}:s,t\in f(A),~t\leq s\}\seq[1,\infty)$ is closed and discrete. To reconcile this definition with our previous emphasis on subsets of $\N$, note that any geometrically sparse $A\seq\Z$ must be bounded below and so $\cZ_A$ and $\cZ_{A\cap\N}$ are interdefinable. The following is our main result.

\begin{alphatheorem}\label{Athm:GS}
If $A\seq\Z$ is geometrically sparse and infinite, then $\Th(\cZ_A)$ is superstable of $U$-rank $\omega$.
\end{alphatheorem}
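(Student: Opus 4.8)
The plan is to establish superstability by giving a good understanding of the definable sets in $\cZ_A$, via a quantifier-elimination or model-completeness analysis relative to the Presburger-type structure on $(\Z,+,0)$ together with the predicate $A$. The first step is to exploit geometric sparsity to understand finite sums and differences from $A$: if $f\colon A\to\R^+$ witnesses geometric sparsity, then because $\sup_{a\in A}|a-f(a)|<\infty$, questions about $\Z$-linear combinations $\sum_i n_i a_i$ with $a_i\in A$ and bounded coefficients $n_i$ reduce, up to a bounded error, to questions about $\R$-linear combinations $\sum_i n_i f(a_i)$. The closed-discrete hypothesis on $\{s/t : s,t\in f(A),\ t\le s\}$ is exactly what is needed to control when two such combinations can be close: for each fixed tuple of coefficients, the set of values of $\sum_i n_i f(a_i)$ that land in a bounded window is governed by finitely many ``dominant term'' configurations. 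So I would first prove a combinatorial lemma saying: for every $k$ and every bound $N$, there is a finite bound $B$ such that for $a_1<\dots<a_k$ in $A$ and $|n_i|\le N$, either $|\sum n_i a_i|>a_k/B$ or the tuple $(a_1,\dots,a_k)$ satisfies one of finitely many linear constraints over $\Z$; in particular $A$ meets every coset of every fixed subgroup in a ``geometrically sparse'' way, and $A$ itself has the property that consecutive ratios are bounded away from $1$ along any subsequence that stays in a bounded region.

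The second step is to use this to pin down $\acl$ and the types. I would aim to show that $\cZ_A$ admits quantifier elimination (or at least is near-model-complete) down to formulas that are Boolean combinations of: Presburger formulas in $(\Z,+,0)$, and formulas of the form $A(t(\xbar))$ for terms $t$. The mechanism is a back-and-forth / amalgamation argument: given a small set of parameters and an element $b$, the combinatorial lemma shows that the ``relevant'' part of $\tp(b/\text{parameters})$ is determined by (i) the Presburger type of $b$ over the group generated by the parameters and the finitely many elements of $A$ that are ``close'' to the relevant linear combinations, and (ii) finitely many instances of $A$. Discreteness guarantees that only finitely many elements of $A$ are relevant to any given formula, which is what prevents the order from being definable and what keeps types from proliferating.

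The third step is to run the stability count. Having reduced types over a model $M$ to data of the form ``a Presburger $1$-type over a finitely generated extension, plus a finite pattern of $A$-membership,'' I would bound the number of such types over a model of size $\lambda$ by $\lambda$ (for $\lambda\ge 2^{\aleph_0}$, say), giving superstability; and then I would compute $U$-rank. For the $U$-rank $\omega$ computation: the generic type of the group is still there with its $U$-rank-$1$ behavior in each ``bounded scale,'' but the predicate $A$ being infinite and geometrically sparse lets one build a forking chain of length $n$ for every $n$ — by specifying the element to be near $A$ at larger and larger ``scales'' — while no single formula forces infinite rank, so $U(\cZ_A)=\omega$ exactly. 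This matches the known examples ($q^{\N}$, the factorials) which are all special cases.

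The main obstacle I anticipate is the quantifier-analysis / amalgamation step: making precise the sense in which ``only finitely many elements of $A$ are relevant'' uniformly enough to get true quantifier elimination, and handling nested terms $t(\xbar)$ where a variable appears inside $A$ and the resulting value is fed into another linear combination. Controlling these iterated constraints — essentially showing the class of ``geometrically sparse configurations'' is closed under the operations needed for a back-and-forth — is where the closed-and-discrete hypothesis must be used most carefully, and where I would expect to need an inductive setup on term complexity, tracking at each stage a finite ``active'' subset of $A$ together with its induced Presburger structure.
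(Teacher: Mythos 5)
Your overall instincts---use geometric sparsity to run a ``dominant term'' analysis of linear combinations from $A$, and reduce type-counting to finitely much $A$-data---point in the right direction, but there are two genuine gaps.

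First, the target of your quantifier analysis is too strong and in fact false. Take $A$ to be the powers of $2$. The set $A+A$ is definable in $\cZ_A$, but it is not a Boolean combination of Presburger-definable sets and sets of the form $\{x:kx+c\in A\}$: each such set contributes only $O(\log N)$ elements of $[1,N]$ outside its periodic part, while $|(A+A)\cap[1,N]|$ grows like $(\log N)^2$, and $A+A$ contains no infinite arithmetic progression, so no finite Boolean combination can produce it. The achievable (and correct) target is the much weaker Casanovas--Ziegler notion that every $\cL_A$-formula is \emph{bounded}, i.e.\ equivalent to one whose quantifiers are relativized to $A$; one gets this by showing that the iterated sumsets $\Sigma_n(\pm A)$ of a geometrically sparse set have density zero (this is where your dominant-term lemma genuinely earns its keep). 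But boundedness does not give quantifier elimination; it shifts the entire burden onto the structure induced on $A$ by $(\Z,+,0)$, which your proposal never analyzes.

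That induced structure is where the real work lies. Your heuristic that ``only finitely many elements of $A$ are relevant'' controls quantification over $\Z$, but the relations $x_1+\ldots+x_k=y_1+\ldots+y_l$ \emph{restricted to tuples from $A$} involve only elements of $A$ and could a priori be unstable (such a relation can easily code an ordering on an arbitrary sparse set). What is needed is a decomposition lemma: for a geometrically sparse $A$ enumerated as $(a_n)$, any solution of $r+a_{m_1}+\ldots+a_{m_k}=a_{n_1}+\ldots+a_{n_l}$ whose index tuple is spread out must split into solutions of two strictly shorter equations, up to a bounded integer perturbation; inducting on $k+l$ then interprets all these solution sets in $(\N,\mathfrak{s})$ expanded by arbitrary unary predicates on the \emph{index} set, a structure of $U$-rank $1$. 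Your step 1 is an ingredient of this, but the induction itself is the bulk of the proof and is absent. Finally, the $U$-rank computation is asserted rather than proved: the lower bound $U(\cZ_A)\geq\omega$ is not ``no single formula forces infinite rank'' but the Palac\'{i}n--Sklinos theorem that $(\Z,+,0)$ has no proper stable expansion of finite $U$-rank, and the upper bound $U(\cZ_A)\leq\omega$ requires a Lascar-inequality argument through $U(A^{\indd})=1$ together with boundedness of formulas.
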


In Section \ref{sec:GS}, we catalog many natural examples of geometrically sparse sets. This includes all of the examples considered in \cite{PaSk} (e.g. powers and factorials), as well as any set $A\seq\N$ such that, if $(a_n)_{n=0}^\infty$ is a strictly increasing enumeration of $A$, then $\lim_{n\rightarrow\infty}\frac{a_{n+1}}{a_n}=\infty$. Geometrically sparse sets also include many famous examples of linear recurrence sequences such as the Fibonacci numbers and Pell numbers (more generally, any recurrence sequence whose characteristic polynomial is the minimal polynomial of a Pisot number or a Salem number, see Section \ref{sec:GS}). We also emphasize that the class of geometrically sparse sets is extremely robust in the following sense: if $A\seq\Z$ is geometrically sparse and $F\seq\Z$ is finite, then \emph{any subset} of $A+F$ is geometrically sparse.

To prove Theorem \ref{Athm:GS}, we use general results on stable expansions by unary predicates, due to Casanovas and Ziegler \cite{CaZi} (which were also used by Palac\'{i}n and Sklinos in \cite{PaSk}). In particular, we show that for geometrically sparse $A\seq\Z$, stability of $\Th(\cZ_A)$ is equivalent to stability of the induced structure on $A$ from $(\Z,+,0)$. We then show this induced structure is superstable of $U$-rank $1$. The $U$-rank calculation in Theorem \ref{Athm:GS} is obtained by combining a result from \cite{PaSk} with the following theorem (proved in Section \ref{sec:Ugen}). We use $U(\cM)$ to denote the $U$-rank of a structure $\cM$.

\begin{alphatheorem}\label{Athm:MU}
Let $\cM$ be an infinite first-order $\cL$-structure, with $U(\cM)=1$, and fix an infinite subset $A\seq M$. Let $\cM_A$ denote the expansion of $\cM$ in the language $\cL_A$ obtained by adding a unary predicate for $A$. Let $A^{\indd}$ denote the induced structure on $A$ from all $0$-definable sets in $\cM$. If all $\cL_A$-formulas are bounded modulo $\Th(\cM_A)$, then $U(\cM_A)\leq U(A^{\indd})\cdot\omega$.
\end{alphatheorem}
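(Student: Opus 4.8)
The plan is to work in a monster model $\cN_A\models\Th(\cM_A)$, writing $\cN$ for its $\cL$-reduct, $A=A(\cN_A)$ for the realized predicate, and $A^{\indd}$ for the induced structure on $A$. If $A^{\indd}$ is not superstable the statement is vacuous, so I assume $U(A^{\indd})=\alpha$ is an ordinal (necessarily $\alpha\geq 1$, since $A$ is infinite). First I would invoke the analysis of bounded expansions by a unary predicate due to Casanovas and Ziegler \cite{CaZi} (the same machinery used in \cite{PaSk}) to record, under the hypothesis that all $\cL_A$-formulas are bounded: that $\cM_A$ induces no more structure on $A$ than $A^{\indd}$ already carries; that $\Th(\cM_A)$ is stable, with $\acl_{\cL_A}(X)\cap A=\acl_{A^{\indd}}(X)$ for $X\seq A$; and, crucially, that forking is \emph{coordinatized} --- for $\acl_{\cL_A}$-closed $C\seq D$, if $\tp_{\cL_A}(\bar a/D)$ forks over $C$ then either $\tp_{\cL}(\bar a/D)$ forks over $C$ in $\cN$, or $\acl_{\cL_A}(\bar aC)\cap A$ is dependent on $D$ over $C\cap A$ in $A^{\indd}$.

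The key feature I would exploit is that $U(\cM)=1$ keeps the $\cL$-side cheap: by Lascar's inequalities, $U_{\cL}(\bar a/D)\leq|\bar a|$ for every finite tuple $\bar a$ and every $D$. Now fix a finite tuple $\bar a$ and an $\acl_{\cL_A}$-closed set $C$, and set $\bar c:=\acl_{\cL_A}(\bar aC)\cap A$ (when this is infinite it is replaced throughout by sufficiently large finite subtuples, inside an induction on $U$-rank). Since $\bar c\seq\acl_{\cL_A}(\bar aC)$ and $C\seq\acl_{\cL_A}(\bar aC)$, one gets $\acl_{\cL_A}(\bar a\bar cC)\cap A=\acl_{\cL_A}(\bar cC)\cap A=\bar c$; hence in the coordinatization no forking of $\tp_{\cL_A}(\bar a/\bar cC)$ over $\bar cC$ can come from the $A^{\indd}$-side, so $U_{\cL_A}(\bar a/\bar cC)=U_{\cL}(\bar a/\bar cC)\leq|\bar a|$. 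Applying the coordinatization to $\bar c$ itself (and $\acl_{\cL_A}(\cdot)\cap A=\acl_{A^{\indd}}(\cdot)$), any forking chain through $\tp_{\cL_A}(\bar c/C)$ uses its $\cL$-component at most $|\bar c|$ times and its $A^{\indd}$-component at most $U_{A^{\indd}}(\bar c/C\cap A)$ times, so $U_{\cL_A}(\bar c/C)\leq|\bar c|\oplus U_{A^{\indd}}(\bar c/C\cap A)$.

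Combining these estimates with Lascar's inequality $U_{\cL_A}(\bar a/C)\leq U_{\cL_A}(\bar a\bar c/C)\leq U_{\cL_A}(\bar a/\bar cC)\oplus U_{\cL_A}(\bar c/C)$ gives
\[
U_{\cL_A}(\bar a/C)\ \leq\ |\bar a|\oplus|\bar c|\oplus U_{A^{\indd}}(\bar c/C\cap A).
\]
Since $\bar c$ is finite, $U_{A^{\indd}}(\bar c/C\cap A)$ is bounded by a natural sum of $|\bar c|$ copies of $\alpha$, which is strictly below $\alpha\cdot\omega=\omega^{\gamma+1}$, where $\gamma$ is the leading exponent of $\alpha$ in Cantor normal form; as $\omega^{\gamma+1}$ is additively indecomposable, adding the finite ordinal $|\bar a|\oplus|\bar c|$ keeps the right-hand side below $\alpha\cdot\omega$. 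Taking the supremum over all finite $\bar a$, all $C$, and all finite approximations $\bar c$ then yields $U(\cM_A)\leq\alpha\cdot\omega=U(A^{\indd})\cdot\omega$ (and in particular $\cM_A$ is superstable).

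I expect the main obstacle to be the first step: extracting the coordinatization of forking from \cite{CaZi} in exactly the stated form, together with the identity $\acl_{\cL_A}(X)\cap A=\acl_{A^{\indd}}(X)$ for $X\seq A$ and the observation that $A^{\indd}$ is the structure induced by $\cM_A$ on $A$; and the companion finiteness reduction, which replaces the possibly infinite tuple $\acl_{\cL_A}(\bar aC)\cap A$ by finite subtuples inside an induction on $U$-rank. Once these are in place, the rank computation above is just Lascar's inequalities together with elementary ordinal arithmetic.
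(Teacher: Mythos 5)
There is a genuine gap, and it sits exactly where you flagged it: the ``coordinatization of forking'' you attribute to \cite{CaZi} is not a result of that paper (Proposition 3.1 there is a type-counting/stability-transfer statement, and Proposition 2.1 is about boundedness of formulas), and it is not a routine consequence of boundedness. Establishing anything like ``if $\tp_{\cL_A}(\bar a/D)$ forks over $C$ then either the $\cL$-type forks or the trace in $A^{\indd}$ of the algebraic closure becomes dependent'' is the actual content of the theorem, so deferring it to the literature leaves the proof without its engine. A second, concrete problem is your tuple $\bar c=\acl_{\cL_A}(\bar aC)\cap A$: whenever $C$ is (or contains) a model meeting $A$ infinitely, $\bar c$ is infinite, and your two estimates pull in opposite directions --- the identity $\acl_{\cL_A}(\bar a\bar cC)\cap A=\bar c$ (needed to kill the $A^{\indd}$-component of forking over $\bar cC$) requires the \emph{full} closure, while the bounds $U_{\cL_A}(\bar c/C)\leq|\bar c|\oplus U_{A^{\indd}}(\bar c/C\cap A)$ and $U_{A^{\indd}}(\bar c/C\cap A)\leq\alpha\otimes|\bar c|$ require $\bar c$ \emph{finite}. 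The parenthetical ``replace by sufficiently large finite subtuples inside an induction on $U$-rank'' does not reconcile these; as written the step-counting bound on $U_{\cL_A}(\bar c/C)$ is also unjustified (one cannot in general decompose a forking chain into ``$\cL$-steps'' and ``$A^{\indd}$-steps'' without first proving the coordinatization).

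For contrast, the paper avoids any general coordinatization. It first proves (Theorem \ref{thm:inddUrk}, via a type-correspondence $p\mapsto p^{\indd}$ for types of tuples \emph{from} $A^*$) that $U(A^*)=U(A^{\indd})=\beta$ as a definable set in $\cM_A$; Lascar then gives the claim that anything in $\acl(A^*B)$ has $U$-rank $<\beta\cdot\omega$ over $B$ --- this part matches the spirit and the ordinal arithmetic of your last step. The substitute for coordinatization is a direct argument for $1$-types over models: if $q\in S_1(B)$ is a forking extension of $p\in S_1^A(M)$ and $b\models q$ were \emph{not} algebraic over $A^*B$, then $b$ and a nonforking realization $a$ of $p$ would both have $U^T$-rank $1$ over $B$ and over $A^*B$ in the rank-one reduct, so by stationarity $\tp_{\cL}(a/A^*B)=\tp_{\cL}(b/A^*B)$; an induction on the quantifiers over $A$ then shows $a$ and $b$ satisfy the same bounded formulas over $B$, hence (by the boundedness hypothesis) the same $\cL_A$-type, contradicting forking. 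This is where $U(\cM)=1$, stationarity over models, and boundedness are all used, and it is the step your proposal would need to supply.
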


The proof of this result uses similar techniques as in \cite[Theorem 2]{PaSk}, which considers the case $\cM=\cZ$ and $U(A^{\indd})=1$. The condition that all $\cL_A$-formulas are bounded modulo $\Th(\cM_A)$ is defined in Section \ref{sec:Ugen}, and is a key ingredient in the work of Casanovas and Ziegler \cite{CaZi}. We will work with this condition via the stronger notion of a \emph{sufficiently small} subset $A$ of a structure $\cM$, which is a slight variation on a similar property from \cite{CaZi}.  In Section \ref{sec:ssparse}, we characterize sufficiently small subsets $A$ of $(\Z,+,0)$ using the occurrence of nontrivial subgroups of $\Z$ in finitary sumsets of $\pm A=\{x\in\Z:|x|\in A\}$, as well as the lower asymptotic density of such sumsets (via a result of Erd\H{o}s, Nathanson, and S\'{a}rk\"{o}zy \cite{ENS} on infinite arithmetic progressions in sumsets of sets with positive density). From Theorem \ref{Athm:MU} we thus obtain the following stability test for structures of the form $\cZ_A$ (see Corollary \ref{cor:Acor}).

\begin{alphacorollary}\label{Acor}
Suppose $A\seq\Z$ is infinite and, for all $n>0$, the set $\{a_1+\ldots+a_k:k\leq n,~a_i\in\pm A\}$ does not contain a nontrivial subgroup of $\Z$. If $\Th(A^{\indd})$ is stable then $\Th(\cZ_A)$ is stable, with $\omega\leq U(\cZ_A)\leq U(A^{\indd})\cdot \omega$.
\end{alphacorollary}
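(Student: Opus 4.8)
The plan is to derive this corollary from Theorem \ref{Athm:MU} applied to the structure $\cM = \cZ = (\Z,+,0)$, which is superstable of $U$-rank $1$, together with the characterization (to be established in Section \ref{sec:ssparse}) of sufficiently small subsets of $(\Z,+,0)$. First I would observe that, since $(\Z,+,0)$ has $U$-rank $1$ and is superstable, the set $A$ falls under the hypotheses of Theorem \ref{Athm:MU} provided we can verify that all $\cL_A$-formulas are bounded modulo $\Th(\cZ_A)$. To get this, I would invoke the characterization of sufficiently small subsets of $(\Z,+,0)$: the condition that for every $n>0$ the finitary sumset $\{a_1+\ldots+a_k : k\leq n,\ a_i\in\pm A\}$ contains no nontrivial subgroup of $\Z$ is exactly what guarantees (combined with the Erd\H{o}s--Nathanson--S\'{a}rk\"{o}zy density input) that $A$ is sufficiently small in $\cZ$, and sufficient smallness implies that all $\cL_A$-formulas are bounded. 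With boundedness in hand, Theorem \ref{Athm:MU} gives $U(\cZ_A)\leq U(A^{\indd})\cdot\omega$.

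For the stability conclusion, I would again use the Casanovas--Ziegler machinery: when $A$ is sufficiently small (so all $\cL_A$-formulas are bounded), stability of $\Th(\cZ_A)$ is equivalent to stability of the induced structure $A^{\indd}$ together with stability of the ambient structure $\cZ$. Since $(\Z,+,0)$ is stable and $\Th(A^{\indd})$ is assumed stable, we conclude $\Th(\cZ_A)$ is stable. (This is presumably packaged as a named result earlier in the paper using \cite{CaZi}; I would cite that.)

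For the lower bound $\omega\leq U(\cZ_A)$, the point is that $\cZ_A$ is a proper expansion of the $U$-rank $1$ structure $(\Z,+,0)$ in which $A$ is an infinite, co-infinite predicate, and the $U$-rank of such an expansion cannot be finite. Concretely, I would argue that any finite $U$-rank would force $\cZ_A$ to be, up to interdefinability, too close to $(\Z,+,0)$; more precisely, one can exhibit an infinite decreasing chain of types witnessing that a global type extending the generic type of $\Z$ has $U$-rank at least $\omega$ — this is essentially the lower bound computation already carried out in \cite[Theorem 2]{PaSk} for the powers-of-$q$ example, and the same argument applies whenever $A$ is infinite and $\Th(\cZ_A)$ is stable. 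Alternatively, since $A$ is infinite and $\Th(A^{\indd})$ is stable, $U(A^{\indd})\geq 1$, and one shows the expansion strictly increases $U$-rank past every finite ordinal.

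The main obstacle I anticipate is not in this corollary itself, which is a fairly direct synthesis, but in the supporting results it quotes: establishing in Section \ref{sec:ssparse} that the stated sumset condition genuinely characterizes sufficient smallness in $(\Z,+,0)$ — this is where the arithmetic combinatorics (the \cite{ENS} result on arithmetic progressions in sumsets of positive-density sets, and the analysis of when subgroups appear in finitary sumsets of $\pm A$) does real work — and in proving Theorem \ref{Athm:MU}, whose proof refines the techniques of \cite[Theorem 2]{PaSk}. Granting those, the corollary is essentially a matter of checking that the hypotheses line up and invoking the Casanovas--Ziegler equivalence between stability of $\Th(\cZ_A)$ and stability of $\Th(A^{\indd})$ in the bounded (sufficiently small) case, plus the $U$-rank bound from Theorem \ref{Athm:MU} and the elementary lower bound $U(\cZ_A)\geq\omega$.
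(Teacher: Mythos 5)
Your proposal is correct and follows essentially the same route as the paper: the sumset hypothesis is shown (via Proposition \ref{prop:sparse} and the Erd\H{o}s--Nathanson--S\'{a}rk\"{o}zy result) to be equivalent to sufficient sparsity, hence sufficient smallness, so all $\cL_A$-formulas are bounded; Casanovas--Ziegler then transfers stability from $\Th(A^{\indd})$ to $\Th(\cZ_A)$, Theorem \ref{thm:UM} gives the upper bound, and the Palac\'{i}n--Sklinos fact that $\cZ$ has no proper stable expansions of finite $U$-rank gives the lower bound. The one detail to make explicit in the lower bound is that $\cZ_A$ is a \emph{proper} expansion: this holds because an infinite sufficiently sparse set is not $\cZ$-definable by Fact \ref{fact:Z}$(d)$, whereas ``infinite and co-infinite'' alone would not rule out definability.
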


In Section \ref{sec:GSstab} we begin the main technical work necessary for the proof of Theorem \ref{Athm:GS}, using Corollary \ref{Acor} as a guide. First, we modify an unpublished argument of Poonen to show that any geometrically sparse set $A\seq\Z$ satisfies the sumset assumption in Corollary \ref{Acor}. The rest of Section \ref{sec:GSstab} is devoted to interpreting $A^{\indd}$ in a superstable structure of $U$-rank $1$ (specifically, $\N$ with the successor function and unary predicates for all subsets of $\N$, see Corollary \ref{cor:Hex}). In Section \ref{sec:index}, we refine the analysis of $A^{\indd}$ for well-known examples of geometrically sparse sets (see Theorem \ref{thm:index}). For example, we use the Skolem-Mahler-Lech Theorem to show that if $A\seq\Z$ is a recurrence sequence of the kind discussed after Theorem \ref{Athm:GS}, then $A^{\indd}$ is interpreted in $\N$ with the successor function and unary predicates for all arithmetic progressions. 

Theorem \ref{Athm:GS} establishes a sparsity assumption on sets $A\seq\N$ which is sufficient for stability of $\Th(\cZ_A)$. In Section \ref{sec:sstable}, we turn to the second task mentioned above, which concerns sparsity assumptions \emph{necessary} for stability. We prove:

\begin{alphatheorem}
Fix $A\seq\N$. 
\begin{enumerate}[$(a)$]
\item If $\cZ_A$ does not define the ordering on $\Z$ then any sumset $A+\ldots+A$ has lower asymptotic density $0$.
\item If $\Th(\cZ_A)$ is stable then any sumset $A+\ldots+A$ has upper Banach density $0$.
\end{enumerate}
\end{alphatheorem}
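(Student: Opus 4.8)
The plan is to handle the two parts separately, each by a combinatorial transfer: first translate the model-theoretic hypothesis into a statement about the combinatorial structure of sumsets of $A$, then invoke the cited density results to derive a contradiction with positive density.

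For part $(a)$, suppose toward a contradiction that some sumset $S = A + \ldots + A$ ($n$ copies) has positive lower asymptotic density. The theorem of Erd\H{o}s, Nathanson, and S\'{a}rk\"{o}zy \cite{ENS} then provides an infinite arithmetic progression $P = \{b, b+d, b+2d, \ldots\}$ contained in the sumset $S + S = A + \ldots + A$ ($2n$ copies). The key point is that an infinite arithmetic progression inside a finitary sumset of $A$ should let $\cZ_A$ recover the ordering on a cofinal subset, hence on all of $\Z$ after translating and scaling (which are definable in $(\Z,+,0)$). Concretely, I would define $x \preceq y$ for $x,y$ far enough out by saying there are $2n$ elements of $A$ summing to $y - x + $ (a fixed element), i.e. that $y - x$ lands in a suitable shift of $S+S$; using that $S+S$ contains a full arithmetic progression eventually, membership in that progression is equivalent (modulo the fixed modulus $d$, which can be absorbed) to an inequality. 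Making this precise — ensuring the formula one writes down genuinely defines a linear order rather than something weaker, and descending from the order on the progression to the order on $\Z$ — will be the main obstacle in part $(a)$, and is presumably where the hypotheses on finitary sumsets and the reduction modulo $d$ need care. Note also that parts $(a)$ and $(b)$ are consistent: stability implies $\cZ_A$ does not define the order (an ordered structure is unstable), so $(b)$'s conclusion is formally stronger only in that Banach density dominates lower density, but logically neither statement subsumes the other since their hypotheses differ.

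For part $(b)$, assume $\Th(\cZ_A)$ is stable but some sumset $S = A + \ldots + A$ ($n$ copies) has positive upper Banach density $\delta$. Here I would use the result of Jin \cite{JinBD}: the sumset $S + S$ of a set of positive upper Banach density is \emph{piecewise syndetic}, meaning it contains arbitrarily long intervals up to a fixed bounded gap — equivalently, $(S+S) + F \supseteq I_k$ for some fixed finite $F$ and arbitrarily long intervals $I_k$. A set that is piecewise syndetic cannot live in a stable structure without the order: piecewise syndeticity of a definable (here, $\cL_A$-definable via the sumset) set lets one build, inside $\cZ_A$, a configuration encoding arbitrarily long initial segments of $(\Z,+,0)$ with its order, using that the intervals $I_k$ have unbounded length. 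More carefully, the standard route is that a definable piecewise syndetic set in an expansion of $(\Z,+,0)$ yields the formula $\varphi(x;y)$ asserting $y - x \in (S+S)+F$ has the order property (one can find $a_i, b_j$ with $b_j - a_i$ in a long interval exactly when $i \leq j$), contradicting stability. The main obstacle in $(b)$ is verifying that the finite translate $F$ and the bounded gaps do not destroy the order configuration — i.e. that piecewise syndeticity, as opposed to actual syndeticity or containing a genuine interval, still suffices to witness the order property — which should follow by choosing the witnesses $a_i, b_j$ inside a single long interval $I_k$ whose length exceeds the relevant bounds.

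In both parts, the model theory is essentially packaged into a single implication — "defines the order" in $(a)$, "has the order property hence unstable" in $(b)$ — and the real content is the combinatorial number theory supplied by \cite{ENS} and \cite{JinBD}. I would therefore structure the proof as: (1) reduce to a purely combinatorial statement about some finitary sumset containing an arithmetic progression (resp. being piecewise syndetic); (2) cite \cite{ENS} (resp. \cite{JinBD}) to get this from the density hypothesis; (3) write down the explicit $\cL_A$-formula and check it defines the order (resp. has the order property). The cleanest presentation likely isolates step (3) as a short lemma: if a finitary sumset of $A \seq \N$ contains an infinite arithmetic progression then $\cZ_A$ defines the order; if a finitary sumset of $A$ is piecewise syndetic then $\Th(\cZ_A)$ is unstable.
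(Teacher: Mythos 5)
Your architecture is exactly the paper's: translate the density hypothesis via \cite{ENS} (resp.\ \cite{JinBD}) into a combinatorial statement about a definable finitary sumset, then convert that into definability of the order (resp.\ the order property). The one idea you are missing — and it is precisely the step you flag in both parts as "the main obstacle" without resolving — is that since $A\seq\N$, every finitary sumset of $A$ is itself contained in $\N$, hence \emph{bounded below by $0$}. This single observation closes both gaps. In part $(a)$, once some $\Sigma_n(A)$ contains an infinite progression $m\N+r$, the paper does not try to define the order relation directly; it defines the \emph{set} $\N$ by $z\in\N\leftrightarrow\big(mz+r\in\Sigma_n(A)\wedge z\notin X\big)$, where $X$ is a finite (hence definable) exceptional set: the containment $m\N+r\seq\Sigma_n(A)$ gives one direction, and $\Sigma_n(A)\seq\N$ forces $mz+r\geq 0$, i.e.\ $z\geq\lceil\frac{\nv r}{m}\rceil$, in the other. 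This sidesteps entirely your worry about whether the formula "genuinely defines a linear order," since $x\leq y$ iff $y-x\in\N$. In part $(b)$, the boundedness below is again what supplies the "only if" direction of the order property: your witnesses must satisfy $b_j-a_i\notin C$ for $i>j$, and the only way to guarantee this from piecewise syndeticity alone is to arrange $b_j-a_i<0$ there and use $C\seq\N$ (a piecewise syndetic definable set that is \emph{not} bounded below, such as $\Z$ itself, is perfectly compatible with stability). Two presentational differences from the paper, neither essential: the paper first uses $1\in A$ (WLOG) to upgrade Jin's piecewise syndeticity to "$\Sigma_n(A)$ contains arbitrarily long intervals," and then quotes the stable-group dichotomy (neither $B$ nor $\Z\backslash B$ generically covers $\Z$, via \cite[Lemma 5.1]{PoStG}) rather than exhibiting order-property witnesses by hand as you propose; also, in $(a)$ the ENS theorem gives the progression in $k(S)$ for \emph{some} $k$, not necessarily in $S+S$, though this costs nothing.
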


The two parts of this result are proved in Section \ref{sec:sstable} as Theorems \ref{thm:stabudelta} and \ref{thm:stabBD}, respectively. Part $(a)$ uses the previously mentioned result of Erd\H{o}s, Nathanson, and S\'{a}rk\"{o}zy \cite{ENS}, and part $(b)$ uses a result of Jin \cite{JinBD} on sumsets of sets with positive upper Banach density.  The proof of part $(b)$ was suggested to us by Goldbring, and  is included here with his permission.

\section{Expansions of first-order structures by unary predicates}\label{sec:Ugen}
\setcounter{theorem}{0}
\numberwithin{theorem}{section}

Throughout this section, we fix a first-order language $\cL$. Let $\cL^*$ be the language consisting of relations $R_\varphi(\xbar)$, where $\varphi(\xbar)$ is an $\cL$-formula over $\emptyset$ in variables $\xbar$. Let $\cM$ be an $\cL$-structure with universe $M$.

\begin{definition}\label{def:indd}
Fix a subset $A\seq M$.
\begin{enumerate}
\item Let $\cL_A=\cL\cup\{A\}$ where, abusing notation, we use $A$ for a unary relation symbol not in $\cL$.
\item Let $\cM_A$ denote the unique $\cL_A$-structure, with underlying universe $M$, satisfying the following properties:
\begin{enumerate}[$(i)$]
\item $\cM$ is the reduct of $\cM_A$ to $\cL$,
\item the unary relation $A$ is interpreted in $\cM_A$ as the subset $A$.
\end{enumerate}
\item Let $A^{\indd}$ be the $\cL^*$-structure, with universe $A$, such that $R_\varphi$ interpreted as $\varphi(A^n)$. 
\item An $\cL_A$-formula $\varphi(\xbar)$ is \textbf{bounded modulo $\Th(\cM_A)$} if it is equivalent, modulo $\Th(\cM_A)$, to a formula of the form
$$
Q_1z_1\in A\ldots Q_mz_m\in A \big[\psi(\xbar,z_1,\ldots,z_m)\big],
$$
where $Q_1,\ldots,Q_m$ are quantifiers and $\psi(\xbar,\zbar)$ is an $\cL$-formula. 
\end{enumerate}
\end{definition}

We will use the following result of Casanovas and Ziegler \cite{CaZi}, which relates the stability of $\Th(\cM_A)$ to stability of $\Th(\cM)$ and $\Th(A^{\indd})$.

\begin{fact}\textnormal{\cite[Proposition 3.1]{CaZi}}\label{fact:CaZi}
Let $\cM$ be an $\cL$-structure and fix $A\seq M$. Suppose every $\cL_A$-formula is bounded modulo $\Th(\cM_A)$. Then, for any cardinal $\lambda\geq|\Th(\cM)|$, $\Th(\cM_A)$ is $\lambda$-stable if and only if $\Th(\cM)$ is $\lambda$-stable and $\Th(A^{\indd})$ is $\lambda$-stable.
\end{fact}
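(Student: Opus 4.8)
The plan is to prove the two directions of the biconditional separately; the forward direction is soft and the backward direction carries the content. For the forward direction, suppose $\Th(\cM_A)$ is $\lambda$-stable. Since $\cM$ is a reduct of $\cM_A$, and $A^{\indd}$ is (uniformly) interpreted in $\cM_A$ — its universe being $A$ and the relation $R_\varphi$ being defined on $A^n$ by $\varphi(\xbar)\wedge\bigwedge_i A(x_i)$ — working inside a monster model of $\Th(\cM_A)$ and using saturation one gets natural surjections $S^{\cL_A}_1(B)\twoheadrightarrow S^{\cL}_1(B)$ for $B$ a subset of a model of $\Th(\cM)$, and $S^{\cL_A}_1(B)\twoheadrightarrow S^{\cL^*}_1(B)$ for $B$ a subset of a model of $\Th(A^{\indd})$. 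As $|\Th(\cM)|,|\Th(A^{\indd})|\le\lambda$, counting types yields that both $\Th(\cM)$ and $\Th(A^{\indd})$ are $\lambda$-stable.

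For the backward direction, assume $\Th(\cM)$ and $\Th(A^{\indd})$ are $\lambda$-stable, and work inside a monster $\mathfrak{M}_A\models\Th(\cM_A)$ with $\cL$-reduct $\mathfrak{M}$ and distinguished set $A':=A(\mathfrak{M}_A)$, so $(A')^{\indd}\models\Th(A^{\indd})$; fix $B$ with $|B|\le\lambda$ and aim for $|S^{\cL_A}_1(B)|\le\lambda$. By the boundedness hypothesis, $\tp^{\cL_A}(a/B)$ is determined by the truth values, over all $\cL$-formulas $\psi$ and all $\bbar\in B$, of the formulas $Q_1z_1\in A\cdots Q_mz_m\in A\,[\psi(a,\zbar,\bbar)]$. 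Since each quantifier ranges over $A'$ and the matrix only asks whether $\zbar$ lies in the $\cL$-definable set $\psi(a,\mathfrak{M}^m,\bbar)$, each such truth value is determined by the trace $\psi(a,(A')^m,\bbar)\seq(A')^m$; hence $\tp^{\cL_A}(a/B)$ is determined by $\tp^{\cL}(a/B)$ together with all of these traces, each of which — quantified over $A'$ according to the pattern $Q_1\cdots Q_m$ — is an assertion about $(A')^{\indd}$.

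The core is to show these traces are controlled by $\lambda$-many $\cL^*$-types over $B\cap A'$, and here stability of $\Th(\cM)$ enters. Passing to $\cM^{\mathrm{eq}}$: for a finite tuple $\bar c$ the type $\tp^{\cL}(\bar c/A')$ is definable and does not fork over a subset of $A'$ of size $\le|\Th(\cM)|$, so the trace on $A'$ of any $\cL$-definable set with external parameters $\bar c$ is cut out by an $\cL^{\mathrm{eq}}$-formula over $A'$ with parameters in $\acl^{\mathrm{eq}}$ of such a small set; coding those imaginaries by real elements of $A'$, the trace becomes a set definable in $(A')^{\indd}$ over a small tuple from $A'$. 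Running this for the sets $\psi(a,\mathfrak{M}^m,\bbar)$ — absorbing the part of $\bbar$ lying outside $A'$ into the external parameters and applying the device once more to that part — each trace becomes an $(A')^{\indd}$-definable subset of $(A')^m$ with parameters a small tuple from $A'$ together with the part of $\bbar$ inside $A'$. Thus $\tp^{\cL_A}(a/B)$ is pinned down by $\tp^{\cL}(a/B)$, a finite amount of coding data per formula, and the $\cL^*$-types over $B\cap A'$ of those small tuples, which are coherent since they are read off from $a$ and its canonical bases over $A'\cup B$. The count closes: $\le\lambda$ choices for $\tp^{\cL}(a/B)$ by $\lambda$-stability of $\Th(\cM)$, and $\le\lambda$ choices for the $\cL^*$-type data by $\lambda$-stability of $\Th(A^{\indd})$ applied to tuples of length $\le|\Th(\cM)|$ — legitimate, since $\lambda$-stability forces $\lambda^{|\Th(\cM)|}=\lambda$.

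The main obstacle is precisely this last step: $A'$ is large, so neither counting $\cL$-types over $A'\cup B$ nor allotting an independent choice to each of the (up to $\lambda$ many) parameter tuples $\bbar\in B$ is affordable — the latter gives only the useless bound $2^\lambda$. The real work is to see that all the information recording how $a$ sits relative to $A'$ is concentrated in a bounded packet of canonical-base data, and to organize it — passing through $\cM^{\mathrm{eq}}$, descending to real parameters via algebraic closure, and handling the part of $B$ outside $A'$ — so that it is captured by boundedly many $\cL^*$-types over $B\cap A'$, at which point $\lambda$-stability of $\Th(A^{\indd})$ applies. Getting this coherence right across the infinitely many $\psi$ is the delicate point.
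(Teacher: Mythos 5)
First, a point of reference: the paper does not prove this statement at all --- it is quoted as a black box from Casanovas and Ziegler \cite[Proposition 3.1]{CaZi} --- so there is no in-paper argument to compare yours against; you are in effect reproving their proposition. Your forward direction is fine, and the backward direction begins correctly: boundedness reduces $\tp^{\cL_A}(a/B)$ to the traces $\psi(a,(A')^m,\bbar)$, and the whole problem is to show that these traces are governed by few $\cL^*$-types. You correctly identify this as the crux, but the two steps you use to cross it both fail as written.

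(a) The passage from ``the trace is cut out by a formula with parameters in $\acl^{\mathrm{eq}}$ of a small subset of $A'$'' to ``the trace is a set definable in $(A')^{\indd}$ over a small tuple from $A'$'' is precisely the content of the theorem and is asserted rather than proved. No elimination of imaginaries is assumed, and ``coding those imaginaries by real elements of $A'$'' has no justification: the canonical base of $\tp^{\cL}(a\bbar/A')$ lies in $\acl^{\mathrm{eq}}(A_0)$ for a small $A_0\seq A'$, which is a set of imaginaries of $\cM^{\mathrm{eq}}$, not of elements of $A'$; moreover a subset of $(A')^m$ of the form $\chi((A')^m,\cbar)$ with $\cbar$ outside $A'$ need not be definable in $(A')^{\indd}$ --- whose language only names traces of $\emptyset$-definable $\cL$-sets --- even allowing parameters from $A'$. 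The same difficulty recurs, unaddressed, when you ``absorb the part of $\bbar$ lying outside $A'$ into the external parameters,'' and the coherence across the infinitely many $\psi$ that you flag as delicate is never established. (b) The closing cardinal arithmetic is false: $\lambda$-stability of a theory $T'$ does not force $\lambda^{|T'|}=\lambda$. By the stability spectrum theorem a superstable $T'$ is $\lambda$-stable for every $\lambda\geq 2^{|T'|}$, including cardinals of countable cofinality, where $\lambda^{\aleph_0}>\lambda$; so counting ``$\cL^*$-types of tuples of length $\leq|\Th(\cM)|$'' is not licensed. Any repair must work with tuples of length $<\kappa(\Th(\cM))$ and argue separately in the superstable and strictly stable cases. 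A smaller issue: one should first extend $B$ to a model $(N,A_N)\models\Th(\cM_A)$ of size $\leq\lambda$, so that the induced-structure types are taken over $A_N$ with $(A_N)^{\indd}\prec(A')^{\indd}$; for a raw set $B$ the set $B\cap A'$ may even be empty and controls nothing. As it stands the proposal locates the difficulty accurately but does not resolve it.
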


Casanovas and Ziegler also provide a way to show that formulas are bounded.

\begin{definition}\label{def:locsmall}
Let $\cM$ be an $\cL$-structure. A subset $A\seq M$ is \textbf{sufficiently small} if, for any $\cL$-formula $\varphi(x,\ybar,\zbar)$, there is a model $\cN\models\Th(\cM_A)$ such that, for any $\bbar\in N^{\zbar}$, if $\{\varphi(x,\abar,\bbar):\abar\in A(N)^{\ybar}\}$ is consistent then it is realized in $\cN$.
\end{definition}

\begin{fact}\textnormal{\cite[Proposition 2.1]{CaZi}}\label{fact:CaZi2}
Let $\cM$ be an $\cL$-structure and fix a sufficiently small subset $A\seq M$. If $\Th(\cM)$ is stable and $\cM$ does not have the finite cover property over $A$, then every $\cL_A$-formula is bounded modulo $\Th(\cM_A)$. 
\end{fact}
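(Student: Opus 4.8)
The plan is to follow the argument of Casanovas and Ziegler \cite{CaZi}. Since bounded $\cL_A$-formulas include all $\cL$-formulas and are closed under Boolean combinations and under the relativized quantifiers $\exists z\in A$ and $\forall z\in A$, an induction on the complexity of $\cL_A$-formulas reduces everything to a single claim: if $\theta(\xbar,y)$ is bounded modulo $\Th(\cM_A)$, then so is $\exists y\,\theta(\xbar,y)$. Fix such a $\theta$.

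First I would record that, for bounded $\theta$, the truth value of $\theta(\cbar,d)$ in any model of $\Th(\cM_A)$ is determined by the $\cL$-type of $\cbar d$ over $A$: unwinding the relativized quantifiers, $\theta(\cbar,d)$ becomes a finitary (but possibly infinitely branching) combination of assertions $\chi(\cbar,d,\bbar)$ with $\chi$ an $\cL$-formula and $\bbar$ from $A$, and each such assertion is decided by $\operatorname{tp}_{\cL}(\cbar d/A)$. Write $\mathcal{P}_\theta$ for the induced condition on complete $\cL$-types in $\xbar,y$ over $A$, so that $\theta(\cbar,d)$ holds exactly when $\operatorname{tp}_{\cL}(\cbar d/A)$ satisfies $\mathcal{P}_\theta$.

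Next I would use that $\Th(\cM)$ is stable and $\cM$ has no finite cover property over $A$ to get uniform definability of $\cL$-types over $A$-sets with defining parameters \emph{in} $A$: for each $\cL$-formula $\varphi(\xbar,\ybar)$ there is an $\cL$-formula $d_\varphi(\ybar,\zbar)$, with $|\zbar|$ depending only on $\varphi$, so that in any $\cN\models\Th(\cM_A)$ every complete $\cL$-type over $A(N)$ has its $\varphi$-part defined by $d_\varphi(\ybar,\bar e)$ for some $\bar e$ from $A(N)$; and, again using the no-finite-cover-property hypothesis, whether a candidate parameter family $(\bar e_\varphi)_\varphi$ actually comes from a consistent complete $\cL$-type can be tested finitely many instances at a time, with all parameters in $A(N)$. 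Granting this, $\exists y\,\theta(\cbar,y)$ can be rewritten as: there is a tuple $\bar e$ from $A$ such that (i) $(\bar e_\varphi)$ defines a consistent complete $\cL$-type $q(\xbar,y)$ over $A$; (ii) the $\xbar$-reduct of $q$ is $\operatorname{tp}_{\cL}(\cbar/A)$, i.e.\ $\forall\bbar\in A\,[\varphi(\cbar,\bbar)\leftrightarrow d_\varphi(\bbar,\bar e_\varphi)]$ for the finitely many relevant $\varphi$; and (iii) $q$ satisfies $\mathcal{P}_\theta$, which after substituting the definitions becomes a statement whose quantifiers over elements of $A$ are relativized to $A$ and whose matrix is an $\cL$-formula in $\cbar,\bar e$. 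Each of (i), (ii), (iii) is then a bounded condition, so the resulting existential over $\bar e\in A$ is a bounded $\cL_A$-formula, and it is the desired equivalent of $\exists y\,\theta(\xbar,y)$.

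The direction of this equivalence that uses sufficient smallness is that the existence of such an $\bar e$ actually produces a witness: the type $q$ from (i)--(iii), specialized at the finite tuple $\cbar$, is consistent and, apart from those finitely many parameters, a type over $A$ satisfying $\mathcal{P}_\theta$, so applying sufficient smallness of $A$ — formula by formula, to the finitely many $\cL$-formulas occurring in $\theta$ and in the definition schemes, and combining the resulting models — yields some $d$ with $\operatorname{tp}_{\cL}(\cbar d/A)$ in $\mathcal{P}_\theta$, i.e.\ with $\theta(\cbar,d)$. The main obstacle, I expect, is precisely this point together with the bookkeeping in the previous paragraph: verifying that consistency of a complete $\cL$-type over $A$, agreement of its reduct with $\operatorname{tp}_{\cL}(\cbar/A)$, and membership in $\mathcal{P}_\theta$ are all genuinely bounded conditions, and manufacturing from the weak (formula-by-formula) smallness hypothesis a single model in which the required witness exists. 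The no-finite-cover-property hypothesis over $A$ is exactly what converts ``consistency of a complete type over $A$'' into a finitary, hence bounded, check, and this is the technical heart of \cite[Proposition 2.1]{CaZi}.
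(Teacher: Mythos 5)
First, a point of comparison: the paper does not prove Fact \ref{fact:CaZi2} at all. It is quoted from \cite[Proposition 2.1]{CaZi}, and the only original content here is Remark \ref{rem:locsmall}, namely the observation that the proof in \cite{CaZi} goes through with the weaker hypothesis of Definition \ref{def:locsmall} (``sufficiently small'') in place of their notion of ``small''. So your proposal has to be measured against the Casanovas--Ziegler argument rather than against anything written in this paper.

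Your outline has the right overall shape (induction reducing everything to a single unrelativized existential; truth of a bounded formula determined by the trace of the witness's $\cL$-type on $A$; no finite cover property over $A$ to make consistency of a defined family of instances a finitary, hence bounded, condition; smallness used to produce witnesses so that the equivalence can be verified), but there is a genuine gap at the definability step. You assert, as a consequence of ``stability plus no fcp over $A$'', that in every $\cN\models\Th(\cM_A)$ every complete $\cL$-type over $A(N)$ is uniformly definable by instances $d_\varphi(\ybar,\bar e)$ with the defining parameters $\bar e$ taken \emph{from} $A(N)$. Stability only gives definability of types over $\acl^{\mathrm{eq}}$ of the parameter set; there is no general reason the defining parameters can be found inside $A$ itself, and even the weaker requirement that the definition be correct on tuples from $A$, uniformly across all models of $\Th(\cM_A)$, is not a standard fact and does not follow formally from the two hypotheses as you invoke them. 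Securing this (or circumventing it, by quantifying over candidate defining parameters inside $A$ and using no fcp over $A$ solely to finitize the consistency check, while handling the forward direction that a witness actually yields such parameters) is exactly the technical heart of \cite[Proposition 2.1]{CaZi}; as written, your plan assumes the substance of what is to be proved. By contrast, the point you flag as the main obstacle is comparatively harmless: since $\Th(\cM_A)$ is complete and the desired equivalence is a single $\cL_A$-sentence for each formula, it suffices to verify it in one model, and finitely many instances of Definition \ref{def:locsmall} can be merged into one by coding finitely many formulas into a single formula with switching parameters --- this is precisely what Remark \ref{rem:locsmall} is pointing at. A further minor repair: the rewriting should be carried out with the finitely many local $\varphi$-types occurring in the given bounded formula (as you partly acknowledge), not with a ``complete $\cL$-type over $A$'', since a formula can only mention finitely many defining schemes.
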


\begin{remark}\label{rem:locsmall}
See \cite{CaZi} for the definition of ``$\cM$ has the finite cover property over $A$''.  The notion of ``sufficiently small'' is slightly weaker than what is used in \cite{CaZi}, where the authors work with so-called \emph{small} sets. The reader may verify that the proof of \cite[Proposition 2.1]{CaZi} only requires $A$ to be sufficiently small. Similar variations of these notions, and the relationships between them, are considered in \cite{BaBa}.
\end{remark}

In later results, we will calculate the $U$-rank of certain expansions of the group of integers by unary sets. Therefore, the goal of the rest of this section is to identify a general relationship between the $U$-ranks of the structures $\cM$, $A^{\indd}$, and $\cM_A$, under certain assumptions. For the rest of this section, we fix a subset $A\seq M$ such that every $\cL_A$-formula is bounded modulo $\Th(\cM_A)$. Let $\cM_A^*$ denote a $\kappa$-saturated monster model of $\Th(\cM_A)$, for some sufficiently large $\kappa$. Let $A^*=A(M^*)$. We then have the $\cL^*$-structure $(A^*)^{\indd}$ as given by Definition \ref{def:indd}. We use the following notation for the various type spaces arising in this situation.
\begin{enumerate}[$(i)$]
\item Given $n>0$ and $B\subset M^*$, 
\begin{align*}
S_n(B) & \text{ is the set of $n$-types in $\cL$ over $B$ consistent with $\Th(\cM)$,}\\
S^A_n(B) & \text{ is the set of $n$-types in $\cL_A$ over $B$ consistent with $\Th(\cM_A)$,}\\
S^A_{n,A}(B) & \text{ is the set of $p\in S^A_n(B)$ such that $p\models A(x_i)$ for all $1\leq i\leq n$.}
\end{align*}
\item Given $n>0$ and $B\subset A^*$,
\begin{align*}
S_n^{\indd}(B) & \text{ is the set of $n$-types in $\cL^*$ over $B$ consistent with $\Th(A^{\indd})$.}
\end{align*}
\end{enumerate}

By assumption on $A$, we may assume that types in $S^A_n(B)$ only contain $\cL_A$-formulas of the form $Q_1 z_1\in A\ldots Q_m z_m\in A\big(\psi(\xbar,\zbar)\big)$, for some $\cL$-formula $\psi(\xbar,\zbar)$ and quantifiers $Q_1,\ldots,Q_m$.

\begin{definition}
$~$
\begin{enumerate}
\item Fix an $\cL$-formula $\psi(\xbar,\zbar)$ and consider the $\cL_A$-formula
$$
\varphi(\xbar):=Q_1z_1\in A\ldots Q_m z_m\in A\big(\psi(\xbar,\zbar)\big),
$$
where each $Q_i$ is a quantifier. Define the $\cL^*$-formula
$$
R_\varphi(\xbar):=Q_1z_1\ldots Q_nz_n\big(R_\psi(\xbar,\zbar)\big).
$$
\item Fix $B\subset A^*$ and $n>0$. Given $p\in S_{n,A}^A(B)$, define the $\cL^*$-type
$$
p^{\indd}=\{R_\varphi(\xbar,\bbar):\varphi(\xbar,\ybar)\in\cL_A,~\bbar\in B,~\text{and }\varphi(\xbar,\bbar)\in p\}.
$$
Given $p\in S^{\indd}_A(B)$, define the $\cL_A$-type
$$
p^{\ext}=\{\varphi(\xbar,\bbar):\varphi(\xbar,\ybar)\in\cL_A,~\bbar\in B,~\text{and }R_\varphi(\xbar,\bbar)\in p\}.
$$
\end{enumerate}
\end{definition}

The following remarks are routine exercises, which we leave to the reader.

\begin{proposition}\label{prop:inddMT}
$~$
\begin{enumerate}[$(a)$]
\item $\Th(A^{\indd})$ has quantifier elimination in the language $\{R_\varphi:\varphi\in\cL_A\}$. 
\item Given $B\subset A^*$, the map $p\mapsto p^{\indd}$ is a bijection from $S^A_{n,A}(B)$ to $S^{\indd}_n(B)$, with inverse $p\mapsto p^{\ext}$.
\item $(A^*)^{\indd}$ is a $\kappa$-saturated elementary extension of $A^{\indd}$.
\item Fix $C\seq B\subset A^*$, $p\in S_{n,A}^A(C)$, and $q\in S_{n,A}^A(B)$ such that $q$ is an extension of $p$. Then $q^{\indd}$ is an extension of $p^{\indd}$ and, moreover, $q$ is a dividing extension of $p$ if and only if $q^{\indd}$ is a dividing extension of $p^{\indd}$. 
\end{enumerate}
\end{proposition}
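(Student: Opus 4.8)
The four parts are all ``routine exercises'' in the sense that each reduces to carefully unwinding the definitions of the translation maps $\varphi\mapsto R_\varphi$, $p\mapsto p^{\indd}$, and $p\mapsto p^{\ext}$, together with the standing hypothesis that every $\cL_A$-formula is bounded modulo $\Th(\cM_A)$. The plan is to prove them in order, since each later part leans on the earlier ones.

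For part $(a)$, I would argue that in the language $\{R_\varphi:\varphi\in\cL_A\}$ the $\cL^*$-structure $A^{\indd}$ already ``sees'' every formula: given an arbitrary $\{R_\varphi\}$-formula $\theta(\xbar)$, one pushes the quantifiers of $\theta$ that range over $A$ inside the matrix $\psi$ of each $R_\varphi$ appearing in $\theta$, using that a block $Q z\in A\,(\cdots)$ followed by the atomic $R_\psi$ is itself of the form $R_{\varphi'}$ for a suitable bounded $\cL_A$-formula $\varphi'$. This shows every $\{R_\varphi\}$-formula is equivalent in $A^{\indd}$ to a single atomic one $R_\varphi(\xbar)$, which is quantifier elimination. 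For part $(b)$, I would check that $p\mapsto p^{\indd}$ is well defined (consistency of $p^{\indd}$ follows because any finite conjunction of its formulas is $R_\varphi$ for a bounded $\varphi$ realized in $p$, hence in $\cM_A^*$, and realizations of $A$-tuples in $\cM_A^*$ give points of $(A^*)^{\indd}$ by part $(c)$, or more elementarily by the semantic correspondence $\cM_A^*\models\varphi(\abar,\bbar)\iff (A^*)^{\indd}\models R_\varphi(\abar,\bbar)$ for $\abar,\bbar\in A^*$), that it lands in $S^{\indd}_n(B)$, and that $p\mapsto p^{\ext}$ is its two-sided inverse; completeness of $p^{\indd}$ uses that the $R_\varphi$ for $\varphi$ ranging over bounded $\cL_A$-formulas generate all $\{R_\varphi:\varphi\in\cL_A\}$-formulas, combined with part $(a)$. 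Part $(c)$ then follows: $(A^*)^{\indd}\equiv A^{\indd}$ because both have the same theory by the semantic translation, the inclusion is elementary by quantifier elimination (part $(a)$) plus agreement on atomic $R_\varphi$, and $\kappa$-saturation of $(A^*)^{\indd}$ is inherited from $\kappa$-saturation of $\cM_A^*$ via the type correspondence in part $(b)$.

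Part $(d)$ is where the real content sits, so I expect it to be the main obstacle. That $q^{\indd}$ extends $p^{\indd}$ when $q$ extends $p$ is immediate from the definition of $(\cdot)^{\indd}$. The dividing equivalence is the crux: here I would use the semantic dictionary between $\cM_A^*$ and $(A^*)^{\indd}$ to transport an $\cM_A^*$-indiscernible sequence $(\bbar_i)_{i<\omega}$ in $\tp(\bbar/C)$ witnessing that $q=\tp(\abar/B)$ divides over $C$ into an $(A^*)^{\indd}$-indiscernible sequence witnessing that $q^{\indd}$ divides over $C$, and conversely. Concretely, $q$ divides over $C$ iff there is a $C$-indiscernible $(\bbar_i)$ with $\bbar_0=\bbar$ such that $\bigcup_i q(\xbar,\bbar_i)$ is inconsistent; because $B,C\seq A^*$ and all the formulas in play are (equivalent to) bounded $\cL_A$-formulas, $C$-indiscernibility of a tuple from $A^*$ in $\cM_A^*$ coincides with $C$-indiscernibility in $(A^*)^{\indd}$ (one direction is part $(c)$; the other uses boundedness so that $\cL_A$-type over $C$ is determined by the $\cL^*$-type of the corresponding $A$-tuple), and inconsistency of $\bigcup_i q(\xbar,\bbar_i)$ translates, formula by formula via $\varphi\leftrightarrow R_\varphi$, to inconsistency of $\bigcup_i q^{\indd}(\xbar,\bbar_i)$ by compactness. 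Running this argument in both directions gives the equivalence. The delicate point to get right is the quantifier-over-$A$ bookkeeping in the $\varphi\mapsto R_\varphi$ translation — making sure a realization of $q$ in $\cM_A^*$ genuinely corresponds to a realization of $q^{\indd}$ in $(A^*)^{\indd}$ even though $\abar$ need not lie in $A^*$ — but since $q\in S^A_{n,A}(B)$ forces $A(x_i)$ for each $i$, any realization of $q$ does lie in $A^*$, and the correspondence goes through.
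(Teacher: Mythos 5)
The paper gives no proof of this proposition — it explicitly declares all four parts ``routine exercises'' left to the reader — and your proposal correctly supplies the intended argument: the semantic translation lemma $\cM_A^*\models\varphi(\abar,\bbar)\iff(A^*)^{\indd}\models R_\varphi(\abar,\bbar)$ for tuples from $A^*$, from which $(a)$--$(c)$ follow, together with the transfer of $C$-indiscernibility and inconsistency (using that $q\models A(x_i)$ and that $C$-conjugates of tuples from $A^*$ stay in $A^*$) for $(d)$. Your handling of the one genuinely delicate point — that realizations and witnessing sequences remain inside $A^*$ — is exactly right, so the proposal is complete as a filling-in of the omitted exercise.
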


\begin{lemma}\label{lem:inddUrk}
If $B\subset A^*$ and $p\in S_{n,A}^A(B)$, then $U(p)\geq U(p^{\indd})$. Moreover, if $\Th(\cM)$ is stable, then $U(p)=U(p^{\indd})$.
\end{lemma}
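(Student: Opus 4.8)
The plan is to prove both inequalities by relating forking/dividing in $\cM_A$ over subsets of $A^*$ to forking/dividing in $(A^*)^{\indd}$, using Proposition \ref{prop:inddMT}. The $U$-rank is computed via the ordinal-valued rank built from chains of dividing extensions, so it suffices to establish a correspondence at the level of single dividing extensions and then induct on ordinals.

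First I would prove $U(p)\geq U(p^{\indd})$ by induction on $\alpha$, showing $U(p^{\indd})\geq\alpha$ implies $U(p)\geq\alpha$. The base case and limit case are immediate from the definition of $U$-rank. For the successor step, suppose $U(p^{\indd})\geq\alpha+1$; then there is $B'\supseteq B$ (with $B'\subset A^*$) and a dividing extension $q'\in S^{\indd}_n(B')$ of $p^{\indd}$ with $U(q')\geq\alpha$. By Proposition \ref{prop:inddMT}$(b)$, $q'=q^{\indd}$ for a unique $q\in S^A_{n,A}(B')$ extending $p$, and by Proposition \ref{prop:inddMT}$(d)$, $q$ is a dividing extension of $p$ (note we are working over parameter sets contained in $A^*$, so $q\in S^A_{n,A}(B')$ and part $(d)$ applies). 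By the inductive hypothesis applied to $q$, $U(q)\geq\alpha$, hence $U(p)\geq\alpha+1$. This gives $U(p)\geq U(p^{\indd})$ unconditionally.

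For the reverse inequality under the assumption that $\Th(\cM)$ is stable, I would run the symmetric induction: $U(p)\geq\alpha$ implies $U(p^{\indd})\geq\alpha$. Again base and limit cases are trivial, so assume $U(p)\geq\alpha+1$, witnessed by a dividing extension $q\in S^A_n(B')$ of $p$ with $B'\subset M^*$ and $U(q)\geq\alpha$. The key point is that, since $p$ concentrates on $A$ (i.e. $p\in S^A_{n,A}(B)$), any extension $q$ also satisfies $A(x_i)$ for all $i$, so $q\in S^A_{n,A}(B')$ provided we can arrange the new parameters to lie in $A^*$. Here is where stability of $\Th(\cM)$ — and hence of $\Th(\cM_A)$ via Fact \ref{fact:CaZi}, once one also invokes stability of $\Th(A^{\indd})$, or more directly the existence of nonforking extensions and symmetry — is used: by the standard fact that over a stable theory one may take the witnessing dividing extension to have parameters in any sufficiently saturated model, and since $A^*$ is $\kappa$-saturated as a model of $\Th(A^{\indd})$ (Proposition \ref{prop:inddMT}$(c)$), we may assume $B'\subset A^*$. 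Then Proposition \ref{prop:inddMT}$(d)$ tells us $q^{\indd}$ is a dividing extension of $p^{\indd}$, and the inductive hypothesis gives $U(q^{\indd})\geq\alpha$, so $U(p^{\indd})\geq\alpha+1$.

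The main obstacle I anticipate is the reduction, in the second half, to the case where the parameter set $B'$ of the witnessing dividing extension lies inside $A^*$ rather than in the larger $M^*$. Without stability this can genuinely fail — dividing over a set $B\subset A^*$ might be witnessed only by sequences of parameters in $M^*\setminus A^*$ whose behavior cannot be captured inside $(A^*)^{\indd}$ — which is exactly why the second assertion of the lemma carries the stability hypothesis. I would handle this by first passing to forking (= dividing, under stability) and using that nonforking extensions exist and that $q$ may be chosen finitely satisfiable in, or invariant over, a small model contained in $A^*$; alternatively, one can cite that under stability $U$-rank is computed equivalently via forking, and forking of types in $S^A_{n,A}$ over subsets of $A^*$ is detected inside the structure $(A^*)^{\indd}$ by Proposition \ref{prop:inddMT}$(d)$ together with the quantifier-elimination of Proposition \ref{prop:inddMT}$(a)$. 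Everything else is a routine ordinal induction.
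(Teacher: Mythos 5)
The first half of your argument (that $U(p)\geq U(p^{\indd})$, by pulling back a dividing chain for $p^{\indd}$ via $q\mapsto q^{\ext}$ and Proposition \ref{prop:inddMT}$(b)$,$(d)$) is correct and is exactly what the paper does.

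The second half has a genuine gap, and it is precisely at the step you flagged as the main obstacle. You propose to move the parameter set $B'$ of the witnessing forking extension into $A^*$ by appealing to ``the standard fact that over a stable theory one may take the witnessing dividing extension to have parameters in any sufficiently saturated model,'' citing the $\kappa$-saturation of $(A^*)^{\indd}$ from Proposition \ref{prop:inddMT}$(c)$. But the relevant models for forking in $\Th(\cM_A)$ are models of $\Th(\cM_A)$, and no such model is contained in $A^*$: the set $A^*$ is only the universe of a saturated model of the \emph{auxiliary} theory $\Th(A^{\indd})$ in the language $\cL^*$. Likewise your fallback of choosing $q$ finitely satisfiable in ``a small model contained in $A^*$'' is vacuous for the same reason. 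So as written you have no mechanism for converting a forking extension of $p$ over an arbitrary $B'\subset M^*$ into a forking extension over a subset of $A^*$ of the same $U$-rank, and this is the entire content of the stable direction.

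The paper's mechanism is different and is the missing idea: first extend the base of the witnessing forking extension to a small model $N\models\Th(\cM_A)$ containing $B$ (harmless under stability), obtaining $q\in S^A_{n,A}(N)$ with $U(q)\geq\alpha$. Since $q$ contains the formula $A(x)$, a type over a model containing a given formula is the unique nonforking extension of its restriction to the parameters of that formula together with its solution set in the model (\cite[Theorem 12.30]{Pobook}); hence $q$ is the nonforking extension of $q'=q|_{A'}$ where $A'=A(N)\seq A^*$, so $U(q)=U(q')$ and $q'$ is still a forking extension of $p$ over a subset of $A^*$. Only then can Proposition \ref{prop:inddMT}$(d)$ and the induction hypothesis be applied. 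If you replace your saturation argument with this ``types over models containing $A(x)$ are based on $A(N)$'' step, the proof goes through.
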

\begin{proof}
We show, by induction on ordinals $\alpha$, that $U(p^{\indd})\geq\alpha$ implies $U(p)\geq\alpha$ and, if $\Th(\cM)$ is stable, then the converse holds as well. The $\alpha=0$ and limit $\alpha$ cases are trivial, so we fix an ordinal $\alpha$ and consider $\alpha+1$.

First, suppose $U(p^{\indd})\geq\alpha+1$ and fix a forking extension $q_0\in S^{\indd}_n(C)$ of $p^{\indd}$, with $B\seq C\subset A^*$, such that $U(q_0)\geq\alpha$. Let $q=q_0^{\ext}$. By parts $(c)$ and $(d)$ of Proposition \ref{prop:inddMT}, it follows that $q$ is a forking extension of $p$. By induction, $U(q)=U(q_0)\geq\alpha$. Therefore $U(p)\geq\alpha+1$.

Now assume $\Th(\cM)$ is stable and $U(p)\geq\alpha+1$. By Fact \ref{fact:CaZi}, $\Th(\cM_A)$ is stable. We may fix a small model $N\models \Th(\cM_A)$ containing $B$, and $q\in S_{n,A}^A(N)$, such that $q$ is a forking extension of $p$ with $U(q)\geq\alpha$. Let $A'=A(N)$, and note that $B\seq A'\subset A^*$. Let $q'=q|_{A'}\in S^A_{n,A}(A')$ and let $r\in S^A_{n,A}(N)$ be a nonforking extension of $q'$ to $N$. Then $A(x)\in q$, $A(x)\in r$, and $q|_{A'}=q'=r|_{A'}$. By \cite[Theorem 12.30]{Pobook}, it follows that $q=r$, and so $U(q)=U(r)=U(q')$. Altogether, we may replace $q$ with $q'$ and assume $q\in S^A_{n,A}(A')$ is a forking extension of $p$, with $U(q)\geq \alpha$. By Proposition \ref{prop:inddMT}$(d)$, $q^{\indd}$ is a forking extension of $p^{\indd}$. Since $A'\subset A^*$, it follows by induction that $U(q^{\indd})\geq\alpha$. Therefore $U(p^{\indd})\geq\alpha+1$.
\end{proof}

\begin{definition}
Define $U(A)=\sup\{U(p):p\in S^A_{1,A}(\emptyset)\}$. 
\end{definition}

To clarify, $U(A)$ is the $U$-rank of the formula $A(x)$ with respect to $\Th(\cM_A)$ (i.e. the $U$-rank of the definable set $A^*$ in $\cM_A^*$). On the other hand, $U(A^{\indd})$ is the $U$-rank of $A^{\indd}$ as a structure (i.e. the $U$-rank of $\Th(A^{\indd})$).  

\begin{theorem}\label{thm:inddUrk}
Suppose  $\cM$ is a structure and $A\seq M$ is such that all $\cL_A$-formulas are bounded modulo $\Th(\cM_A)$. 
Then $U(A)\geq U(A^{\indd})$ and, if $\Th(\cM)$ is stable, then $U(A)=U(A^{\indd})$.
\end{theorem}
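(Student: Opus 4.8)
The plan is to derive the statement directly from Lemma \ref{lem:inddUrk} together with the bijection in Proposition \ref{prop:inddMT}$(b)$, applied at the level of types over $\emptyset$. By definition, $U(A) = \sup\{U(p) : p \in S^A_{1,A}(\emptyset)\}$ and $U(A^{\indd}) = U(\Th(A^{\indd})) = \sup\{U(q) : q \in S^{\indd}_1(\emptyset)\}$ (the $U$-rank of a complete theory being the supremum of the $U$-ranks of its $1$-types over $\emptyset$). By Proposition \ref{prop:inddMT}$(b)$ with $B = \emptyset$, the map $p \mapsto p^{\indd}$ is a bijection $S^A_{1,A}(\emptyset) \to S^{\indd}_1(\emptyset)$. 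So the two suprema are taken over sets of types that are matched up by this bijection.

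For the inequality $U(A) \geq U(A^{\indd})$: given any $q \in S^{\indd}_1(\emptyset)$, let $p = q^{\ext} \in S^A_{1,A}(\emptyset)$, so that $p^{\indd} = q$. By Lemma \ref{lem:inddUrk} (applied with $B = \emptyset$), $U(p) \geq U(p^{\indd}) = U(q)$. Taking the supremum over all such $q$ gives $U(A) \geq U(A^{\indd})$. For the reverse inequality when $\Th(\cM)$ is stable: given any $p \in S^A_{1,A}(\emptyset)$, the ``moreover'' clause of Lemma \ref{lem:inddUrk} gives $U(p) = U(p^{\indd})$, and $p^{\indd} \in S^{\indd}_1(\emptyset)$, so $U(p) \leq U(A^{\indd})$. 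Taking the supremum over $p$ gives $U(A) \leq U(A^{\indd})$, hence equality.

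I do not expect any genuine obstacle here; the theorem is essentially a repackaging of Lemma \ref{lem:inddUrk}. The only point requiring a small amount of care is the bookkeeping identification $U(\Th(A^{\indd})) = \sup\{U(q): q\in S^{\indd}_1(\emptyset)\}$, i.e.\ that it suffices to range over $1$-types rather than $n$-types; this is standard (for the theory of a one-sorted structure the $U$-rank is realized on $1$-types, or one simply observes the same argument above goes through verbatim with $S^A_{n,A}(\emptyset)$ and $S^{\indd}_n(\emptyset)$ for every $n$, matching the definitions of $U(A)$ and $U(A^{\indd})$ sort by sort). One should also note that Proposition \ref{prop:inddMT}$(b)$ and Lemma \ref{lem:inddUrk} are stated over an arbitrary $B \subset A^*$, and we are merely instantiating them at $B = \emptyset$, which is legitimate since $\emptyset \subset A^*$.
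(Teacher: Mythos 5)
Your proposal is correct and follows essentially the same route as the paper: the paper's proof is exactly the observation that $U(A^{\indd})=\sup\{U(p):p\in S_1^{\indd}(\emptyset)\}$ (justified by Proposition \ref{prop:inddMT}$(c)$, i.e.\ saturation of $(A^*)^{\indd}$) combined with Lemma \ref{lem:inddUrk} applied at $B=\emptyset$, with the bijection of Proposition \ref{prop:inddMT}$(b)$ doing the matching of the two suprema just as you describe.
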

\begin{proof}
Recall that $U(A^{\indd})=\sup\{U(p):p\in S_1^{\indd}(\emptyset)\}$. Now apply Proposition \ref{prop:inddMT}$(c)$ and Lemma \ref{lem:inddUrk}.
\end{proof}

We now restate and prove Theorem \ref{Athm:MU}, which uses similar techniques as in the proof of \cite[Theorem 2]{PaSk} by Palac\'{i}n and Sklinos. In the following, we use $\cdot$ to denote standard ordinal multiplication, and $\otimes$ to denote ``natural multiplication" using Cantor normal form. 

\begin{theorem}\label{thm:UM}
Let $\cM$ be stable of $U$-rank $1$. If $A\seq M$ is infinite, and all $\cL_A$-formulas are bounded modulo $\Th(\cM_A)$, then $U(\cM_A)\leq U(A^{\indd})\cdot\omega$.
\end{theorem}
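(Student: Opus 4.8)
The plan is to bound $U(\cM_A)$ by controlling the $U$-rank of an arbitrary $1$-type $p \in S^A_1(\emptyset)$ over the monster model $\cM_A^*$, and the key reduction is to split such a type according to whether or not it concentrates on $A^*$. If $p$ contains $A(x)$, then $p \in S^A_{1,A}(\emptyset)$ and Lemma~\ref{lem:inddUrk} (via Theorem~\ref{thm:inddUrk}) gives $U(p) = U(p^{\indd}) \leq U(A^{\indd})$, which is certainly $\leq U(A^{\indd})\cdot\omega$. So the content is in the case $\neg A(x) \in p$: here I would try to show $U(p) < U(A^{\indd})\cdot\omega$ by realizing $x$ in terms of finitely many elements of $A^*$ and one ``generic'' element of $\cM$ over those parameters. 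Since $\cM$ has $U$-rank $1$, a type in $\cM$ over a set of parameters either forks (rank $0$, i.e. is algebraic over those parameters) or is the unique nonforking extension (rank $1$); this trichotomy is what drives the induction.

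The heart of the argument is an analysis of forking chains. I would show, by induction on $\alpha$, that if $p$ is the type of some $a \in M^*$ (not in $A^*$) and $U(p) \geq \alpha$, then $\alpha$ is ``bounded by $U(A^{\indd})$ copies of $\omega$'' in the following sense: along any forking sequence witnessing the rank, each step either increases the amount of algebraic information $a$ carries over the $A^*$-parameters seen so far (a phenomenon that, because $U(\cM)=1$, can happen only boundedly often between ``$A$-events'' — this produces the factor $\omega$), or it forces a new genuine forking dependence among finitely many elements of $A^*$, which by Proposition~\ref{prop:inddMT}$(d)$ translates into a forking extension inside $A^{\indd}$ (this produces the factor $U(A^{\indd})$). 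Concretely, using that all $\cL_A$-formulas are bounded, a formula in $p$ over parameters $\bbar$ has the shape $Q_1 z_1 \in A \dots Q_m z_m \in A\, \psi(x,\bbar,\zbar)$; a realization $a$ of $p$ together with the (finitely many) witnesses $\cbar \in A^*$ for the relevant existential blocks determines $\tp^{\cL}(a/\bbar\cbar)$, and one shows $a \in \acl^{\cL}(\bbar\cbar)$ outside a controlled exceptional set, so that the ``$\cM$-part'' of the type over $\bbar\cbar$ has $U$-rank $\leq 1$. Then $U(p) \leq U\big(\tp^{A}(\cbar/\bbar)\big) + 1$ in a suitable Lascar-inequality sense, and iterating/taking suprema over the finitely many witnesses gives the clean bound $U(p) \leq U(A^{\indd})\cdot\omega$.

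To make the induction go through I would use the Lascar inequalities for $U$-rank ($U(q/bc) \leq U(q/b) \oplus$-type bounds, or the strict versions for ordinal sums) to combine the ``$A$-rank'' contribution with the bounded ``$\cM$-rank'' contribution: a type over $A^* \cup \{$one generic$\}$ whose restriction to $A^*$ has $U$-rank $\beta$ and which is algebraic over those parameters will have $U$-rank $\leq \beta + n$ for some finite $n$, and since $\beta < U(A^{\indd})\cdot\omega$ is closed under adding finite ordinals on the right within each ``$\omega$-block'', the bound $U(A^{\indd})\cdot\omega$ is preserved. The role of boundedness of $\cL_A$-formulas is precisely to guarantee that only \emph{finitely} many $A^*$-witnesses are ever needed to pin down $\tp^{\cL}(a/-)$, so that the induced-structure types in play live in $S^{\indd}_n(\emptyset)$ for finite $n$ and have $U$-rank $\leq U(A^{\indd})$; without it one could not localize to a finite tuple from $A^*$.

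The main obstacle I anticipate is the bookkeeping in the inductive step: correctly formulating and tracking the ``exceptional set'' of parameters over which $a \notin \acl^{\cL}(\bbar\cbar)$, and verifying that a forking extension of $p$ that does \emph{not} come from a forking extension of the associated $A^{\indd}$-type must strictly increase algebraic information and hence can be iterated only finitely often before an $A$-forking step is forced. This is the place where the $U(\cM)=1$ hypothesis is used in an essential, non-formal way, and where the proof most closely parallels — and must adapt — the argument of \cite[Theorem~2]{PaSk}, which handled the special case $\cM = \cZ$, $U(A^{\indd}) = 1$, yielding the bound $U(\cZ_A) \leq \omega$ there.
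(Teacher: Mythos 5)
There is a genuine gap: your sketch correctly identifies the two regimes (realizations algebraic over $A^*$ together with the base, versus generic ones) and correctly locates the difficulty in showing that forking forces the algebraic regime, but you never actually close that step --- you explicitly defer it as ``the main obstacle I anticipate.'' The paper's resolution is a short stationarity argument that your induction-on-forking-chains framework does not supply. One works over a \emph{model} $B$ (after passing to a nonforking extension), takes $b$ realizing the forking extension $q$ of $p$ and $a$ realizing a nonforking extension of $p$ to $B$, and argues: if $b\notin\acl(A^*B)$, then since $\Th(\cM)$ has $U$-rank $1$ and $B$ and $M$ are models, stationarity in the reduct gives $\tp_{\cL}(a/A^*B)=\tp_{\cL}(b/A^*B)$; an induction on the quantifiers over $A$ then shows $a$ and $b$ satisfy the same bounded $\cL_A$-formulas over $B$, and boundedness of \emph{all} $\cL_A$-formulas yields $\tp_{\cL_A}(a/B)=\tp_{\cL_A}(b/B)=q$, contradicting that $q$ forks. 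So $b\in\acl(A^*B)$. This is where $U(\cM)=1$ and boundedness are really used; there is no ``exceptional set'' bookkeeping and no alternation of ``$A$-events'' with algebraic steps.

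Two further points in your write-up would not survive being made precise. First, a complete type is not determined by finitely many $A^*$-witnesses: each bounded formula has finitely many quantifiers over $A$, but the type has infinitely many formulas, so you cannot localize $\tp_{\cL}(a/{-})$ to a finite tuple $\cbar\in A^*$; the finite tuple only appears once algebraicity is established (a single algebraic formula has finitely many parameters from $A^*$). Second, your quantitative bound is off: for $c\in\acl(A^*B)$ witnessed by $\abar\in A^*$ of length $n$, Lascar's inequalities give $U(c/B)\leq U(\abar/B)\leq U(\abar)\leq\beta\otimes n$ (natural product), where $\beta=U(A^{\indd})=U(A^*)$ by Theorem \ref{thm:inddUrk}; the factor $\omega$ in the conclusion arises precisely because $n$ is unbounded and $\sup_n(\beta\otimes n)=\beta\cdot\omega$, not from ``adding finite ordinals within an $\omega$-block'' or from a bound of the form $\beta+n$. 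Your reduction of the case $A(x)\in p$ to Lemma \ref{lem:inddUrk} is fine, but that case is also subsumed by the algebraicity claim.
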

\begin{proof}
We may assume $\Th(A^{\indd})$ is superstable, since otherwise $U(A^{\indd})=\infty$ and the result holds trivially. By Fact \ref{fact:CaZi}, $\Th(\cM_A)$ is superstable. Let $\beta=U(A^{\indd})$. Let $T=\Th(\cM)$. Unless otherwise specified, we work in $\Th(\cM_A)$. 

\noit{Claim}: Given $c\in M^*$, if $c\in\acl(A^*B)$ then $U(c/B)<\beta\cdot\omega$.

\noit{Proof}: Let $\abar\in A^*$ be a finite tuple witnessing $c\in\acl(A^*B)$. Note $U(A^*)=\beta$ by Theorem \ref{thm:inddUrk} and so, using Lascar's inequality, $U(\abar/B)\leq U(\abar)\leq \beta\otimes |\abar|$. Since $A$ is infinite, we have $\beta>0$ and so $\beta\otimes|\abar|<\beta\cdot\omega$ . Applying Lascar's inequality once more, we obtain $U(c/B)\leq U(\abar/B)<\beta\cdot\omega$. \claim

We now show $U(\cM_A)\leq\beta\cdot\omega$. In particular, we fix $p\in S_1^A(M)$, and show $U(p)\leq\beta\cdot\omega$. To do this, we fix $B\supseteq M$ and a forking extension $q\in S_1(B)$ of $p$, and show $U(q)<\beta\cdot\omega$. After replacing $q$ by some nonforking extension, we may also assume $B$ is a model. Fix $b\in M^*$ realizing $q$. Let $a\in M^*$ realize a nonforking extension of $p$ to $B$. If $a\in\acl(A^*B)$ then $U(p)=U(a/B)<\beta\cdot\omega$ by the claim, and the result follows. So we may assume $a\not\in\acl(A^*B)$, which means $U^T(a/A^*B)=1$. For a contradiction, suppose $b\not\in\acl(A^*B)$. Then $b\not\in\acl(B)$, and so $U^T(b/B)=1=U^T(a/B)$. Since $\tp_{\cL}(a/M)=p=\tp_{\cL}(b/M)$, and $M$ is a model, it follows from stationarity in $T$ that $\tp_{\cL}(a/B)=\tp_{\cL}(b/B)$. Similarly, $b\not\in\acl(A^*B)$ implies $U^T(b/A^*B)=1$ and so, since $B$ is a model,  $\tp_{\cL}(a/A^*B)=\tp_{\cL}(b/A^*B)$ by stationarity in $T$. By induction on quantifiers over $A$, it follows that $a,b$ realize the same bounded $\cL_A$-formulas over $B$. By assumption on $A$, we have $\tp_{\cL_A}(a/B)=\tp_{\cL_A}(b/B)=q$, which contradicts that $q$ is a forking extension of $p$. Therefore $b\in\acl(A^*B)$, and so $U(q)=U(b/B)<\beta\cdot\omega$ by the claim.
\end{proof}

\section{Preliminaries on the group of integers}\label{sec:Z}

Let $\cZ=(\Z,+,0)$ denote the group of integers in the group language $\{+,0\}$. For $n\geq 2$, we let $\equiv_n$ denote equivalence modulo $n$, which is definable in $\cZ$ by the formula $\exists z(x=nz+y)$. For the rest of the paper, $\cL$ denotes the expanded language $\{+,0,(\equiv_n)_{n\geq 2}\}$. Note that an $\cL$-term is a linear combination of variables with positive integer coefficients. Since $=$ and $\equiv_n$ are invariant (modulo $\Th(\cZ)$) under addition, there is no harm in abusing notation and allowing negative integer coefficients in $\cL$-terms. We list some well known facts about $\Th(\cZ)$.

\begin{fact}\label{fact:Z}$~$
\begin{enumerate}[$(a)$]
\item $\Th(\cZ)$ has quantifier elimination in the language $\cL$ (see, e.g., \cite{Mabook}).

\item $\Th(\cZ)$ is superstable of $U$-rank $1$, but not $\omega$-stable (see, e.g., \cite{BPW}, \cite{Mabook}).

\item $\cZ$ does not have the finite cover property. In particular, for any $A\seq\Z$, $\cZ$ does not have the finite cover property over $A$. See \cite{BauFCP} and \cite{CaZi}.

\item Given $n>0$, the Boolean algebra of $\cZ$-definable subsets of $\Z^n$ is generated by cosets of subgroups of $\Z^n$ (see \cite{Prestbook}). It follows that if $A\seq\Z$ is infinite and bounded above or bounded below, then $A$ is not definable in $\cZ$.
\end{enumerate}
\end{fact}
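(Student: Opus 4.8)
The statement gathers four standard facts about $\cZ$, and the plan is to obtain all of them from quantifier elimination, which I treat first. For (a) I would use the usual one-quantifier criterion: putting the matrix of $\exists x\,\varphi(x,\ybar)$ in disjunctive normal form reduces to $\varphi$ a conjunction of literals, and after pulling out the literals that omit $x$ the remaining ones read $kx=s(\ybar)$, $kx\neq s(\ybar)$, $kx\equiv_n s(\ybar)$, or $kx\not\equiv_n s(\ybar)$, with $k\in\Z\setminus\{0\}$ and $s$ an $\cL$-term (possibly with a constant summand). Multiplying each literal by a suitable nonzero integer so that $x$ occurs everywhere with one fixed coefficient $m$, and then substituting a new variable $u$ for the block $mx$, constrained by $u\equiv_m 0$, reduces the task to eliminating $\exists u$ from $(u\equiv_m 0)\wedge\psi(u,\ybar)$ in which every literal of $\psi$ is linear in $u$ with coefficient $1$. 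If $\psi$ has a literal $u=s$, substitute it throughout (recording the divisibility $s\equiv_m 0$); otherwise the literals are inequations $u\neq s_j$ together with positive and negated congruences in $u$, whose congruence part is jointly solvable exactly under an explicit quantifier-free condition on $\ybar$ (by the Chinese Remainder Theorem) and, when solvable, has an infinite solution set --- a nonempty union of residue classes modulo the least common multiple of the moduli involved --- so the finitely many inequations are harmless. Thus $\exists u\,\psi$ is equivalent to a quantifier-free disjunction of congruence systems on $\ybar$; the details are in \cite{Mabook}.

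For (b), quantifier elimination makes $\tp_\cL(a/\emptyset)$ depend only on the residue of $a$ modulo $n$ for each $n\geq 2$, and every coherent family of residues --- i.e.\ every element of the profinite completion $\widehat{\Z}$ of $\Z$ --- determines a partial type finitely satisfiable in $\cZ$ (by the Chinese Remainder Theorem), with distinct families giving contradictory partial types; hence $|S_1(\emptyset)|\geq|\widehat{\Z}|=2^{\aleph_0}>\aleph_0$ and $\Th(\cZ)$ is not $\omega$-stable. For superstability and the $U$-rank I would invoke the model theory of modules: the pp-definable subgroups of a model $M\models\Th(\cZ)$, in one free variable, are exactly the $nM$ for $n\geq 0$ (with $0\cdot M=\{0\}$), and every containment among them except $nM\supsetneq\{0\}$ has finite index, so no chain of pp-subgroups has more than one infinite-index jump; this gives superstability. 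Finally $U(\cZ)=1$: by quantifier elimination a complete $1$-type over a model is either algebraic, of rank $0$, or determined by a point of $\widehat{\Z}$, and the only proper forking extensions of such a non-algebraic type are algebraic (they pin $x$ to a linear form over the new parameters), so such a type has $U$-rank exactly $1$. One can also read off superstability, non-$\omega$-stability, and the $U$-rank by counting $n$-types directly from (a). See \cite{BPW,Mabook}.

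For (c), I would cite rather than reprove: the absence of the finite cover property for $\cZ$ is in \cite{BauFCP} and is discussed in \cite{CaZi}. The point (since $\cZ$ is a module) is that a pp-formula $\varphi(x;\ybar)$ cuts out, for each value of the parameters, a coset of the parameter-free subgroup $\varphi(M;0)=nM$ of one fixed index, so there is no parametrized family of definable cosets of finite but unbounded size, and this rules out the instability phenomena that produce the fcp. The ``in particular'' clause needs no argument: a formula witnessing the finite cover property of $\cZ$ over some $A\seq\Z$ already witnesses it outright, since that notion merely restricts some of the parameters to lie in $A$. For (d), quantifier elimination presents any definable $X\seq\Z^n$ as a Boolean combination of sets $\{\xbar:t(\xbar)=c\}$ and $\{\xbar:t(\xbar)\equiv_m c\}$ with $t$ an integer linear form, $c\in\Z$, and $m\geq 2$; the first set is empty or a coset of $\ker t$, and the second is empty or a coset of the finite-index subgroup $\{\xbar:t(\xbar)\equiv_m 0\}$, so the definable sets are precisely the Boolean algebra generated by cosets of subgroups (this is \cite{Prestbook}). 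For the consequence, suppose an infinite $A\seq\Z$ bounded on one side, say below, were definable; since the subgroups of $\Z$ are $\{0\}$ and the $m\Z$, a Boolean representation writes $A$ as the symmetric difference of a union $S$ of residue classes modulo a fixed $L$ with a finite set, and, $A$ being infinite, $S$ is a nonempty such union, hence unbounded above and below, whence so is $A$ --- a contradiction.

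The one ingredient above that goes beyond unwinding quantifier elimination is the no-fcp statement of part (c); locating and quoting the right module-theoretic input for it is the only nontrivial point in an otherwise routine proof.
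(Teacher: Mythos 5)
Your proposal is correct, and it matches the paper's treatment: the paper states these four items as known facts with no argument, simply citing \cite{Mabook}, \cite{BPW}, \cite{BauFCP}, \cite{CaZi}, and \cite{Prestbook}, and your sketches (one-variable quantifier elimination, type-counting over $\emptyset$ via coherent residue data, the pp-subgroup lattice $nM\supseteq mM$ for superstability and $U$-rank $1$, citation for the failure of the finite cover property together with the observation that fcp over $A$ only restricts parameters, and the coset decomposition for $(d)$) are exactly the standard arguments underlying those references. The only point worth noting is that your part $(c)$ is, like the paper's, ultimately a citation rather than a proof, which is entirely appropriate here.
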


From now on, given a set $A\seq\Z$, we use $A^{\indd}$ to denote the induced structure on $A$ from $\cZ$. The next corollary follows from results of \cite{PaSk} and Theorem \ref{thm:UM}.

\begin{corollary}\label{cor:UZ}
Suppose $A\seq\Z$. Assume that $A$ is not definable in $\cZ$ and every $\cL_A$-formula is bounded modulo $\Th(\cZ_A)$. Then $\omega\leq U(\cZ_A)\leq U(A^{\indd})\cdot\omega$. In particular, if $U(A^{\indd})$ is finite then $U(\cZ_A)=\omega$.
\end{corollary}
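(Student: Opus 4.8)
The plan is to derive Corollary~\ref{cor:UZ} by combining the upper bound supplied by Theorem~\ref{thm:UM} with a lower bound coming from the cited work of Palac\'{i}n and Sklinos. First I would verify the hypotheses of Theorem~\ref{thm:UM} are in place: by Fact~\ref{fact:Z}$(b)$, $\cZ$ is stable of $U$-rank $1$, and the standing assumption says every $\cL_A$-formula is bounded modulo $\Th(\cZ_A)$; moreover $A$ must be infinite, since a finite set is certainly definable in $\cZ$ (a finite union of points), contradicting the assumption that $A$ is not definable. Hence Theorem~\ref{thm:UM} applies and yields $U(\cZ_A)\leq U(A^{\indd})\cdot\omega$, which is the upper bound in the statement.

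For the lower bound, the point is that $\cZ_A$ is a proper expansion of $\cZ$: since $A$ is not definable in $\cZ$, the new predicate genuinely adds structure, so $\cZ_A$ cannot itself have $U$-rank $1$ (indeed it cannot have finite $U$-rank). The relevant input is the result from \cite{PaSk} that any stable proper expansion of $\cZ$ by a subset has $U$-rank at least $\omega$; I would cite that result here (it is the companion to \cite[Theorem 2]{PaSk} referenced in the discussion of Theorem~\ref{Athm:MU}). One also needs to know $\Th(\cZ_A)$ is superstable in order to even speak of an ordinal $U$-rank: if $U(A^{\indd})<\omega$ then $\Th(A^{\indd})$ is superstable, and by Fact~\ref{fact:CaZi} (with $\cZ$ superstable) $\Th(\cZ_A)$ is superstable; if $U(A^{\indd})$ is an infinite ordinal the upper bound $U(A^{\indd})\cdot\omega$ is still a well-defined ordinal and the argument goes through, while if $U(A^{\indd})=\infty$ there is nothing to prove for the upper bound. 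Combining $\omega\leq U(\cZ_A)$ with $U(\cZ_A)\leq U(A^{\indd})\cdot\omega$ gives the displayed inequality, and the ``in particular'' clause is immediate: if $U(A^{\indd})=n<\omega$ then $U(A^{\indd})\cdot\omega = n\cdot\omega=\omega$, forcing $U(\cZ_A)=\omega$.

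I expect the only real subtlety to be the lower bound $\omega\leq U(\cZ_A)$, i.e.\ making precise why a proper expansion of a $U$-rank-$1$ superstable group cannot have finite $U$-rank. The cleanest route is simply to quote the corresponding statement from \cite{PaSk}; if one wanted a self-contained argument, one would observe that $\cZ$ is one-based (being essentially a module, by Fact~\ref{fact:Z}$(d)$) with a unique generic type, and that a finite-$U$-rank superstable expansion would inherit enough of this rigidity to force $A$ to be definable — but spelling this out is more work than the corollary warrants, so I would lean on the cited result. Everything else is a routine assembly of Theorem~\ref{thm:UM}, Fact~\ref{fact:CaZi}, and Fact~\ref{fact:Z}.
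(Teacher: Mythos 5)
Your proof is correct and takes essentially the same route as the paper: the upper bound is Theorem~\ref{thm:UM} applied to $\cZ$ (superstable of $U$-rank $1$ by Fact~\ref{fact:Z}), and the lower bound is exactly the cited fact from \cite{PaSk} that $\cZ$ has no proper stable expansions of finite $U$-rank. Your additional check that non-definability forces $A$ to be infinite (so Theorem~\ref{thm:UM} applies) is a detail the paper leaves implicit.
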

\begin{proof}
Since $\cZ$ has $U$-rank $1$, the upper bound comes from Theorem \ref{thm:UM}. The lower bound is from the fact, due to Palac\'{i}n and Sklinos \cite{PaSk}, that $\cZ$ has no proper stable expansions of finite $U$-rank. 
\end{proof}

Finally, we record some notation and facts concerning asymptotic density and sumsets of integers. Let $\N=\{0,1,2,\ldots\}$ denote the set of \emph{nonnegative} integers. Given $n>0$, we let $[n]$ denote $\{1,\ldots,n\}$.

\begin{definition}
Suppose $A\seq\N$. 
\begin{enumerate}
\item Set $\pm A=\{x\in \Z:|x|\in A\}$.
\item Given $n\geq 1$, define $A(n)=|A\cap[n]|$. 
\item The \textbf{lower asymptotic density} of $A$ is
$$
\udelta(A)=\liminf_{n\rightarrow\infty}\frac{A(n)}{n}.
$$
\end{enumerate}
\end{definition}

Given two subsets $A,B\seq\Z$, we write $A\sim B$ if $A\smd B$ is finite.  The following facts are straightforward exercises.

\begin{fact}\label{fact:sym}
$~$
\begin{enumerate}[$(a)$]
\item If $A,B\seq\N$ and $A\sim B$, then $\udelta(A)=\udelta(B)$. In particular, $\udelta(A)=0$ for any finite $A\seq\N$.
\item If $A,B\seq\N$ and $A\seq B$, then $\udelta(A)\leq\udelta(B)$.
\item If $m,r\in\N$ and $m>0$, then $\udelta(m\N+r)=\frac{1}{m}$.
\end{enumerate}
\end{fact}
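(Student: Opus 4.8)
The plan is to verify each clause directly from the definition $\udelta(A)=\liminf_{n\to\infty}\frac{A(n)}{n}$, using only that $\liminf$ is monotone and insensitive to perturbations tending to $0$.

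For $(a)$, I would set $F=A\smd B$, which is finite by hypothesis, and observe that for every $n\geq 1$ we have $A\cap[n]\seq(B\cap[n])\cup F$ and $B\cap[n]\seq(A\cap[n])\cup F$, whence $|A(n)-B(n)|\leq|F|$. Dividing by $n$ gives $\bigl|\frac{A(n)}{n}-\frac{B(n)}{n}\bigr|\leq\frac{|F|}{n}\to 0$, so the two sequences have the same $\liminf$ and $\udelta(A)=\udelta(B)$. For the remaining assertion, a finite set $A$ satisfies $A\sim\emptyset$, and $\udelta(\emptyset)=\liminf_n\frac{0}{n}=0$, so $\udelta(A)=0$ by the first part.

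For $(b)$, $A\seq B$ immediately gives $A(n)\leq B(n)$, hence $\frac{A(n)}{n}\leq\frac{B(n)}{n}$ for all $n$; taking $\liminf$ on both sides (which preserves the inequality) yields $\udelta(A)\leq\udelta(B)$.

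For $(c)$, I would compute the counting function of $P=m\N+r$ exactly: for $n$ sufficiently large, $P(n)=\lfloor\frac{n-r}{m}\rfloor+1$ if $r\geq 1$, while $P(n)=\lfloor\frac{n}{m}\rfloor$ if $r=0$ (the two cases differing only because $0\notin[n]$). In either case $|mP(n)-n|$ is bounded by a constant independent of $n$, so $\frac{P(n)}{n}\to\frac{1}{m}$, and in particular the $\liminf$ equals $\frac{1}{m}$. The argument contains no genuine obstacle; the only place demanding any care is the endpoint bookkeeping and the separate treatment of $r=0$ in $(c)$, neither of which affects the value of the limit.
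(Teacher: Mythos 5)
Your proofs are correct and are exactly the direct computations intended: the paper states these as ``straightforward exercises'' and supplies no argument, so there is nothing to diverge from. All three verifications (the $|A(n)-B(n)|\leq|F|$ bound for $(a)$, monotonicity of $\liminf$ for $(b)$, and the exact count of $m\N+r$ in $[n]$ for $(c)$) are sound.
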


\begin{definition}
$~$
\begin{enumerate}
\item A subset $X\seq\Z$ is \textbf{symmetric} if it is closed under the map $x\mapsto \nv x$.
\item Given $X\seq\Z$ and $n\in\Z$, define $nX=\{nx:x\in X\}$.
\item Given $X,Y\seq\Z$, define $X+ Y=\{x+y:x\in X,~y\in Y\}$.
\item Given $X\seq\Z$ and $n\geq 1$, define $n(X)=X+ X+\ldots+ X$ ($n$ times).
\item Given $X\seq\Z$ and $n\geq 1$, define $\Sigma_n(X)=\bigcup_{k=1}^n k(X)$.
\end{enumerate}
\end{definition}

We also cite the following result of Erd\H{o}s, Nathanson, and S\'{a}rk\"{o}zy \cite{ENS}.

\begin{fact}\textnormal{\cite{ENS}}\label{fact:ENS}
Suppose $A\seq\N$ is such that $\udelta(A)>0$. Then, for some $n\geq 1$, $n(A)$ contains an infinite arithmetic progression (and so $\Sigma_n(A)$ does as well).
\end{fact}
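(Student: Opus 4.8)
The plan is to derive this from Kneser's theorem on the lower asymptotic density of sumsets, via the usual density-increment technique. Since $\udelta(A)>0$ already forces $A$ to be infinite, and translating $A$ by a constant affects neither $\udelta(A)$ nor the existence of an infinite arithmetic progression in a sumset (an infinite progression in $n(A)-na_0=n(A-a_0)$ transports back to one in $n(A)$), I would first replace $A$ by $A-\min A$ and so assume $0\in A$. The gain is the nested chain $A=1(A)\seq 2(A)\seq 3(A)\seq\cdots$, since adjoining a zero summand turns a sum of $k$ elements of $A$ into a sum of $k+1$ of them. Writing $\alpha=\udelta(A)>0$, it then suffices to produce a single $n\geq1$ with $n(A)$ containing an infinite arithmetic progression, as $\Sigma_n(A)\supseteq n(A)$.

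Next I would argue that the densities $\udelta\big(k(A)\big)$ cannot keep increasing by the full amount $\alpha$: if $\udelta\big((k+1)(A)\big)\geq\udelta\big(k(A)\big)+\alpha$ held for every $k\geq1$, then $\udelta\big(k(A)\big)\geq k\alpha$, which is impossible once $k>1/\alpha$. Hence for some $k\leq\lceil 1/\alpha\rceil$ we have
$$
\udelta\big(k(A)+A\big)=\udelta\big((k+1)(A)\big)<\udelta\big(k(A)\big)+\udelta(A).
$$
Since this is a density deficiency for the sumset $k(A)+A$, Kneser's theorem applies and forces $(k+1)(A)$ to be eventually periodic modulo some integer $g\geq1$ --- that is, to contain every sufficiently large integer lying in some fixed residue class mod $g$. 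Discarding the finitely many small exceptions exhibits an infinite arithmetic progression of common difference $g$ inside $(k+1)(A)\seq\Sigma_{k+1}(A)$, completing the argument with $n=k+1$.

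The main obstacle is that this really does require the sumset-structure theorem at a bounded stage, and cannot be replaced by a softer argument: the numerical semigroup $\bigcup_k k(A)$ is, once we also normalize $\gcd(A)=1$, cofinite in $\N$ and hence trivially contains infinite progressions, but this tells us nothing about how many summands a given large integer needs, so no single $n$ is produced that way. The delicate point is invoking the correct form of Kneser's theorem for subsets of $\N$, namely the version guaranteeing that a sumset exhibiting a density deficiency eventually \emph{contains} --- not merely is contained in --- a union of residue classes; this is classical, and is precisely the input supplied by \cite{ENS}.
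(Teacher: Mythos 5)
The paper does not prove this statement at all: it is quoted as a black-box \emph{Fact} with a citation to \cite{ENS}, so there is no in-paper argument to compare against. Your derivation is correct as a standalone proof. The normalization $0\in A$ (harmless, since translation commutes with taking $n$-fold sumsets and preserves both $\udelta$ and the presence of infinite progressions) gives the chain $k(A)\seq(k+1)(A)$ and the identity $\Sigma_n(A)=n(A)$; the density-increment step then produces some $k$ with $\udelta\bigl(k(A)+A\bigr)<\udelta\bigl(k(A)\bigr)+\udelta(A)$, and Kneser's theorem for lower asymptotic density (in the Halberstam--Roth form: a sumset with such a deficiency contains all sufficiently large elements of a nonempty union of residue classes mod some $g$) yields the infinite progression. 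You are right that the only delicate point is using the ``contains, not merely is contained in'' form of Kneser, and your application is legitimate since both $k(A)$ and $A$ contain $0$ and have positive lower density. The one difference worth noting is that the argument actually given by Erd\H{o}s, Nathanson, and S\'{a}rk\"{o}zy is not this soft Kneser deduction: their paper is aimed at quantitative conclusions (explicit bounds on the number of summands and on the common difference of the progression in terms of $\udelta(A)$), obtained by a more hands-on counting argument. For the qualitative statement the paper needs, your route is shorter and entirely adequate.
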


\begin{remark}\label{rem:NaNa}
Fact \ref{fact:ENS} can be viewed as a variant of \emph{Schnirel'mann's basis theorem} \cite{Schn}, which says that if $A\seq\N$ is such that $\udelta(A)>0$ and $1\in A$ then $\Sigma_n(A)=\N$ for some $n\geq 1$. Nash and Nathanson \cite{NaNa} use this result to give the following strengthening of Fact \ref{fact:ENS}: if $\udelta(A)>0$ then, for some $n\geq 1$, $\Sigma_n(A)$ is cofinite in $a\N$, where $a=\gcd(A)$.
\end{remark}

\section{Sparse sets and bounded formulas}\label{sec:ssparse}

The goal of this section is to characterize sufficiently small subsets of $\Z$ using a combinatorial notion of sparsity. 

\begin{definition}
A set $A\seq\Z$ is \textbf{sufficiently sparse} if $\udelta(\Sigma_n(\pm A)\cap\N)=0$ for all $n\geq 1$.
\end{definition}

\begin{proposition}\label{prop:sparse}
Given $A\seq\Z$, the following are equivalent.
\begin{enumerate}[$(i)$]
\item $A$ is sufficiently sparse.
\item For all $n\geq 1$, $\Sigma_n(\pm A)$ does not contain a nontrivial subgroup of $\Z$.
\item For all $n\geq 1$, $\Sigma_n(\pm A)$ does not contain a coset of a nontrivial subgroup of $\Z$.
\end{enumerate}
\end{proposition}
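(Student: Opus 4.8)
## Proof Proposal for Proposition \ref{prop:sparse}

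The plan is to prove the cycle of implications $(i) \Rightarrow (iii) \Rightarrow (ii) \Rightarrow (i)$, although in fact $(ii) \Leftrightarrow (iii)$ should be essentially immediate and the real content lies in connecting either of these to the density condition $(i)$.

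First I would dispose of the equivalence of $(ii)$ and $(iii)$. The implication $(iii) \Rightarrow (ii)$ is trivial, since a nontrivial subgroup $m\Z$ is in particular a coset of itself. For $(ii) \Rightarrow (iii)$, note that $\pm A$ is symmetric, hence so is each $\Sigma_n(\pm A)$, and moreover $0 \in \Sigma_n(\pm A)$ as long as $A$ is nonempty (write $a + (\nv a)$; if $A = \emptyset$ the proposition is vacuous). Now suppose $\Sigma_n(\pm A)$ contains a coset $m\Z + r$ of the nontrivial subgroup $m\Z$. Then for $x, y \in m\Z + r$ we have $x - y \in m\Z$, and since $\Sigma_n(\pm A)$ is symmetric, $-y \in \Sigma_{n}(\pm A)$ whenever $y \in \Sigma_n(\pm A)$; thus $x + (\nv y) \in \Sigma_{2n}(\pm A)$. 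Ranging over $x, y \in m\Z + r \seq \Sigma_n(\pm A)$ shows $m\Z \seq \Sigma_{2n}(\pm A)$, so $(ii)$ fails with $2n$ in place of $n$. Contrapositively, $(ii) \Rightarrow (iii)$. (One must be slightly careful to track that the index doubles, but since $(ii)$ and $(iii)$ are each quantified over all $n$, this is harmless.)

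Next, $(i) \Rightarrow (iii)$, or rather its contrapositive: if some $\Sigma_n(\pm A)$ contains a coset $m\Z + r$ of a nontrivial subgroup, then $m\Z + r$ meets $\N$ in an infinite arithmetic progression, so $\udelta(\Sigma_n(\pm A) \cap \N) \geq \udelta((m\N + r') \setminus F)$ for suitable $r' \in \{0,\dots,m-1\}$ and finite $F$, which by Fact \ref{fact:sym}$(a)$ and $(c)$ equals $\frac{1}{m} > 0$. Hence $A$ is not sufficiently sparse. This direction is straightforward.

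The substantive implication is $(i) \Rightarrow (ii)$ — or, combined with the above, the contrapositive $\neg(ii) \Rightarrow \neg(i)$ already handled, so what actually remains is $\neg(i) \Rightarrow \neg(iii)$, i.e. $\neg(i) \Rightarrow \neg(ii)$: \emph{if $\udelta(\Sigma_n(\pm A) \cap \N) > 0$ for some $n$, then some finitary sumset of $\pm A$ contains a nontrivial subgroup.} This is where Fact \ref{fact:ENS} enters. Set $B = \Sigma_n(\pm A) \cap \N$; by assumption $\udelta(B) > 0$, so by Fact \ref{fact:ENS} there is $k \geq 1$ such that $k(B)$ — equivalently $\Sigma_k(B) \seq \Sigma_{nk}(\pm A)$ — contains an infinite arithmetic progression $m\N + r$ with $m \geq 1$. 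So far this only gives a coset, not a subgroup; to upgrade, I exploit symmetry again exactly as in the $(ii)\Rightarrow(iii)$ step: $\Sigma_{nk}(\pm A)$ is symmetric and contains $0$, so it contains $\nv(m\N + r) = m\Z_{\leq 0} + (\nv r)$ as well, and taking pairwise sums $x + (\nv y)$ with $x, y \in m\N + r$ we get all of $m\Z \seq \Sigma_{2nk}(\pm A)$. Thus $\Sigma_{2nk}(\pm A)$ contains the nontrivial subgroup $m\Z$, negating $(ii)$. The main obstacle — really the only nonroutine point — is recognizing that Fact \ref{fact:ENS} supplies merely an arithmetic progression and that one needs the symmetry of $\pm A$ (and the presence of $0$) to promote that progression to a genuine subgroup; everything else is bookkeeping with the indices $n, k$ and appeals to Fact \ref{fact:sym}.
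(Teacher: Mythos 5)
Your proposal is correct and follows essentially the same route as the paper: the trivial implication $(iii)\Rightarrow(ii)$, the density bound via Fact \ref{fact:sym} for the easy direction, the symmetry-of-$\pm A$ trick (doubling the sumset index) to promote a coset to a subgroup, and Fact \ref{fact:ENS} plus that same symmetry trick for the substantive implication. The only differences are cosmetic bookkeeping in which cycle of implications is traversed.
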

\begin{proof}
$(iii)\Rightarrow (ii)$ is trivial; and $(i)\Rightarrow(ii)$ follows from Fact \ref{fact:sym}. 

$(ii)\Rightarrow(iii)$. Suppose $\Sigma_n(\pm A)$ contains the coset $m\Z+r$, with $m\geq 1$. Then $r\in\Sigma_n(\pm A)$, and so $m\Z\seq\Sigma_{2n}(\pm A)$. 

$(ii)\Rightarrow(i)$: Suppose that $\udelta(B)>0$, where $B=\Sigma_n(\pm A)\cap\N$ for some $n\geq 1$. By Fact \ref{fact:ENS}, there is some $k\geq 1$ such that $\Sigma_k(B)$ contains an infinite arithmetic progression $m\N+r$. Then $m\Z\seq\Sigma_{2k}(\pm B)\seq \Sigma_{2kn}(\pm A)$.
\end{proof}

\begin{lemma}\label{lem:sparse}
Suppose $X,F\seq\Z$ are symmetric, with $F$ finite. If $\udelta(X\cap\N)=0$ then $\udelta((X+ F)\cap\N)=0$. 
\end{lemma}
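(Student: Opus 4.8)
The plan is to show that adding a finite symmetric set $F$ to a set $X$ of zero lower density cannot raise the lower density, by reducing to the case where $F$ is a single element and then to the case where $F\seq\N$. First I would observe that $X+F=\bigcup_{f\in F}(X+f)$, so by Fact \ref{fact:sym}$(b)$ and finite subadditivity of $\udelta$ along finite unions (which follows from $\limsup$ of a sum being at most the sum of $\limsup$s, applied to $\liminf$, or more simply: if each piece has density $0$ then so does the finite union, since $(Y_1\cup\dots\cup Y_k)(n)\leq Y_1(n)+\dots+Y_k(n)$), it suffices to prove $\udelta((X+f)\cap\N)=0$ for each fixed $f\in\Z$. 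By symmetry of $X$, we have $X+f=\nv((X+(\nv f)))$ up to a sign bookkeeping, and since only the intersection with $\N$ matters, we may assume $f\geq 0$.

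So fix $f\in\N$. The key estimate is that for $n>f$,
$$
(X+f)\cap[n]\seq \big((X\cap\N)\cap[n]\big)+f \ \cup\ \big((X\cap\{\nv f,\dots,\nv 1\})+f\big),
$$
the second set being finite (of size at most $f$). Hence $(X+f)(n)\leq (X\cap\N)(n)+f$ for all $n$, since translation by a nonnegative integer is injective and $\big((X\cap\N)\cap[n]\big)+f\seq[n+f]$ contributes at most $(X\cap\N)(n)$ many elements to $[n]$ — more carefully, $\big|((X\cap\N)+f)\cap[n]\big|\leq\big|(X\cap\N)\cap[n]\big|=(X\cap\N)(n)$ because $y\mapsto y+f$ maps $(X\cap\N)\cap[n-f]$ into $[n]$ and is injective, and elements of $(X\cap\N)$ above $n-f$ land outside $[n]$. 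Therefore
$$
\frac{(X+f)(n)}{n}\leq\frac{(X\cap\N)(n)+f}{n}\xrightarrow[n\to\infty]{}\udelta(X\cap\N)+0=0,
$$
taking $\liminf$ on both sides gives $\udelta((X+f)\cap\N)\leq\udelta(X\cap\N)=0$.

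Combining the single-element case with the finite-union reduction: since $F$ is finite, say $F=\{f_1,\dots,f_k\}$, each $\udelta((X+f_i)\cap\N)=0$ by the above (using symmetry of $X$ to handle negative $f_i$), and then $\udelta((X+F)\cap\N)\leq\sum_{i=1}^k\udelta((X+f_i)\cap\N)=0$ by subadditivity of $\udelta$ over finite unions. I do not expect any serious obstacle here; the only mild subtlety is the bookkeeping around negative translates and the intersection with $\N$, which is exactly why the hypothesis that both $X$ and $F$ are symmetric is convenient — it lets us freely pass between $f$ and $\nv f$ without changing $\udelta((X+F)\cap\N)$. (Strictly, symmetry of $F$ is not even needed for this lemma; only symmetry of $X$, or even just that $X$ is bounded below or has density $0$ as a subset of $\N$ after translation, matters — but we state it as in the paper.)
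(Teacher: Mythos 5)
Your single-translate estimate is essentially sound, but the way you aggregate over $F$ contains a genuine error: lower asymptotic density is \emph{not} subadditive over finite unions. The pointwise inequality $(Y_1\cup\dots\cup Y_k)(n)\leq Y_1(n)+\dots+Y_k(n)$ is of course true, but passing to $\liminf$ does not yield $\udelta(Y_1\cup\dots\cup Y_k)\leq\sum_i\udelta(Y_i)$: the $\liminf$ of a sum is bounded \emph{below}, not above, by the sum of the $\liminf$s (it is $\limsup$ that is subadditive, which is exactly why your parenthetical justification works for upper density but fails for lower density). Concretely, take $Y\seq\N$ with $\udelta(Y)=0$ and upper density $1$ (long blocks separated by much longer gaps); then $Y$ and $\N\backslash Y$ both have lower density $0$, while their union is $\N$, of density $1$. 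So both places where you invoke subadditivity of $\udelta$ --- the initial reduction to a single translate, and the final line $\udelta((X+F)\cap\N)\leq\sum_{i}\udelta((X+f_i)\cap\N)$ --- are unjustified as written, and this is precisely the step where a union of ``density-zero'' sets could in principle fail to have density zero.

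The repair is already contained in your own argument: you prove the \emph{pointwise} bound $((X+f)\cap\N)(n)\leq (X\cap\N)(n)+|f|$ for every $n$ (for $f<0$ this is even easier and needs no symmetry, since every $x\in X$ contributing to $(X+f)\cap[n]$ is then already positive). Sum these pointwise bounds over $f\in F$ \emph{before} taking any limit:
\[
((X+F)\cap\N)(n)\;\leq\;\sum_{f\in F}((X+f)\cap\N)(n)\;\leq\;|F|\cdot (X\cap\N)(n)+\sum_{f\in F}|f|,
\]
and now dividing by $n$ and taking $\liminf$ is legitimate, since multiplying a sequence by the constant $|F|$ and adding an $O(1/n)$ term does not change whether the $\liminf$ is $0$. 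Once corrected, this is essentially the paper's argument, which bounds $C(n)\leq |F|\cdot B(n+\max F)$ in one step via a $(\leq|F|)$-to-one map from $C\cap[n]$ to $B\cap[n+\max F]$ and then concludes by contradiction; handling $F$ all at once rather than translate by translate is exactly what avoids the temptation to add lower densities. One further bookkeeping point: your reduction of negative $f$ to positive $f$ via $X+f=\nv(X+(\nv f))$ does not quite work, because negation exchanges $\N$ with the nonpositive integers, so $(X+f)\cap\N$ corresponds to the nonpositive part of $X+(\nv f)$, not to $(X+(\nv f))\cap\N$; the direct estimate for $f<0$ sidesteps this entirely.
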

\begin{proof}
Let $B=X\cap\N$, $C=(B+ F)\cap\N$, and $D=(X+ F)\cap\N$. We want to show $\udelta(D)=0$. Note that
$$
C\seq D\seq C\cup(\{x\in X:\min F\leq x<0\}+ F),
$$
and so $C\sim D$. Therefore, it suffices to show $\udelta(C)=0$. 

Let $t=|F|$ and $s=\max F$. Given $n\geq 1$, consider the function from $C\cap[n]$ to $B\cap[n+s]$ sending $x$ to the least $b\in B$ such that $x=b+f$ for some $f\in F$. This function is $(\leq t)$-to-one, and so $C(n)\leq B(n+s)t$. Suppose, toward a contradiction, that $\udelta(C)>0$. Then there is some $\epsilon>0$ and $n_*\geq 1$ such that, for all $n\geq n_*$, $\inf_{m\geq n}\frac{C(m)}{m}\geq \epsilon$. For any $n\geq n_*$, we have
$$
\inf_{m\geq n}\frac{B(m)+s}{m}\geq\inf_{m\geq n}\frac{B(m+s)}{m}\geq\inf_{m\geq n}\frac{C(m)}{mt}\geq\frac{\epsilon}{t}.
$$
Therefore, if $n\geq\max\{n_*,\frac{2st}{\epsilon}\}$, then
$$
\inf_{m\geq n}\frac{B(m)}{m}\geq\inf_{m\geq n}\frac{B(m)+s}{m}-\frac{s}{n}\geq \frac{\epsilon}{t}-\frac{s}{n}\geq\frac{\epsilon}{2t}>0.
$$
It follows that $\udelta(B)>0$, which contradicts our assumptions. 
\end{proof}

\begin{lemma}\label{lem:sschar}
A set $A\seq\Z$ is sufficiently small if and only if it is sufficiently sparse.
\end{lemma}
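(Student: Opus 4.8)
The plan is to prove the equivalence by showing separately that ``sufficiently sparse'' implies ``sufficiently small'' and that failure of ``sufficiently sparse'' implies failure of ``sufficiently small.'' Throughout I will use Proposition \ref{prop:sparse} to freely switch between the density formulation and the ``no nontrivial subgroup in any $\Sigma_n(\pm A)$'' formulation, and I will use the quantifier elimination for $\Th(\cZ)$ in the language $\cL$ (Fact \ref{fact:Z}$(a)$) to reduce statements about $\cL$-formulas to statements about finite Boolean combinations of equations and congruences in linear terms with integer coefficients.

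For the direction ``sufficiently sparse $\Rightarrow$ sufficiently small,'' fix an $\cL$-formula $\varphi(x,\ybar,\zbar)$; by quantifier elimination I may assume $\varphi$ is a conjunction of equations and negated equations $t(x,\ybar,\zbar)=0$ and congruences $t(x,\ybar,\zbar)\equiv_k 0$, where each $t$ is a $\Z$-linear combination. The task is to build $\cN\models\Th(\cZ_A)$ so that every finitely satisfiable (over $A(N)^{\ybar}$, with parameters $\bbar\in N^{\zbar}$) family $\{\varphi(x,\abar,\bbar):\abar\in A(N)^{\ybar}\}$ is realized. I would pass to a suitably saturated model, and analyze when such a family can fail to be realized: a single instance $\varphi(x,\abar,\bbar)$ pins down $x$ only if some equational conjunct has nonzero $x$-coefficient, in which case $x$ lies in a fixed coset of a bounded subgroup of $\Z$ determined by $\abar,\bbar$; otherwise $\varphi(x,\abar,\bbar)$ defines a union of congruence classes, and consistency of the whole family forces these to have a common point in any sufficiently saturated model. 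The obstruction to realizing the family is therefore that infinitely many instances each constrain $x$ to distinct finite sets, with empty intersection; the content of sparsity is precisely that the $x$-coefficients being nonzero, combined with $\abar$ ranging over $A$-tuples, would force an infinite arithmetic progression (equivalently a coset of a nontrivial subgroup) to appear inside some $\Sigma_n(\pm A)$ after solving for the pinned values of $x$ — contradicting sufficient sparsity. Making this precise (bounding the relevant subgroup, tracking how the coset moves as $\abar$ varies, and invoking Proposition \ref{prop:sparse}) is the heart of this direction.

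For the converse, suppose $A$ is not sufficiently sparse, so by Proposition \ref{prop:sparse} there is $n\geq 1$ with $\Sigma_n(\pm A)\supseteq m\Z$ for some $m\geq 1$. I want an $\cL$-formula $\varphi(x,\ybar,\zbar)$ witnessing failure of sufficient smallness: in any $\cN\models\Th(\cZ_A)$ there should be a consistent $A(N)$-family that is omitted. The natural choice is a formula expressing ``$x = a_1 \pm a_2 \pm \cdots \pm a_n$'' unwound from the sumset description of $m\Z$, together with the congruence $x\equiv_m 0$; more carefully, I would use that every element of $m\Z$ — and in particular arbitrarily large ones — is an $\cL$-definable combination over $A$, and cook up a type in $x$ over $A(N)$ saying $x\notin\{\text{any particular such representation}\}$ yet $x\equiv_m 0$ and $x$ larger than every named parameter. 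Each finite piece is satisfiable (only finitely many values excluded, and $m\Z$ is infinite), but the full type asserts $x\in m\Z\setminus\Sigma_n(\pm A(N))=\varnothing$, so it is omitted in every model. This shows $\varphi$ is not bounded in the sense required, hence $A$ is not sufficiently small. The main subtlety here is phrasing the omitted family using a single $\cL$-formula with bounded parameters rather than an infinite scheme; I expect to package it as $\varphi(x,y_1,\dots,y_n) := \bigvee_{\epsilon\in\{\pm1\}^n}(x = \epsilon_1 y_1 + \cdots + \epsilon_n y_n)$ and negate it along with imposing $x\equiv_m 0$, exploiting that $\Sigma_n(\pm A)$ is exactly the union of such linear images.

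The step I expect to be the main obstacle is the ``sufficiently sparse $\Rightarrow$ sufficiently small'' direction, specifically controlling the subgroup and coset data uniformly as the $A$-parameters vary: one must show that the failure of realizability genuinely produces a coset of a \emph{nontrivial, bounded} subgroup of $\Z$ sitting inside a finitary sumset of $\pm A$, and this requires carefully extracting the bound on denominators/coefficients from the fixed formula $\varphi$ and arguing that saturation suffices to realize the family whenever no such coset appears. The reverse direction is comparatively soft, amounting to exhibiting an explicit omitted type built directly from the containment $m\Z\subseteq\Sigma_n(\pm A)$.
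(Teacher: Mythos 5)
Your converse direction is essentially the paper's argument and is fine: after reducing (via Proposition \ref{prop:sparse}) to $m\Z\seq\Sigma_n(\pm A)$, the formula $x\equiv_m 0\wedge x\neq \epsilon_1y_1+\cdots+\epsilon_ny_n$ yields, in \emph{every} model of $\Th(\cZ_A)$, a consistent family over $A(N)$ that is omitted, because the containment $m\Z\seq\Sigma_n(\pm A)$ is expressed by a first-order sentence and so transfers. (The paper normalizes by assuming $0\in A$ so that the $k$-fold sums for $k<n$ are absorbed into the $n$-fold sums; you should do the same or pad your disjunction.)

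The forward direction, which you correctly identify as the heart of the matter, has a genuine gap, and the plan you sketch for it would not close it. First, passing to a saturated model is the wrong move: the family $\{\varphi(x,\abar,\bbar):\abar\in A(N)^{\ybar}\}$ is indexed by \emph{all} of $A(N)^{\ybar}$, which in any saturated model has the cardinality of the model itself, so no degree of saturation realizes such a type; the paper instead realizes the family in the standard model $\cZ$. Second, your case analysis is inverted. If some disjunct used by the nonstandard realization $c^*$ contains an equation with nonzero $x$-coefficient, then $c^*$ is already a standard integer and there is nothing to prove. The substantive case is the opposite one, where each instance only \emph{excludes} finitely many values of $x$ (via conjuncts $f_i(x,\abar,\bbar)\neq 0$) and imposes congruences. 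After packaging the congruences into a single class $n\Z+r$ by the Chinese Remainder Theorem, one must show that the union over all $\abar\in A^{|\ybar|}$ of the excluded values, which is contained in a finite union of sets of the form $\{x:m_ix-c_i\in\Sigma_k(\pm A)\}$, cannot cover $n\Z+r$. This is where sparsity enters, and it requires a quantitative step your sketch does not supply: one must know that $\Sigma_k(\pm A)+F$ still has lower density zero on $\N$ for any finite symmetric $F$ (the paper's Lemma \ref{lem:sparse}), so that the forbidden set, having density zero after dilation by $m_1\cdots m_t$, cannot contain the full coset. Without that lemma (or an equivalent), the claim that ``failure of realizability produces a coset inside a finitary sumset'' remains an assertion rather than a proof.
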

\begin{proof}
First, fix a sufficiently sparse set $A\seq\Z$. Let $\varphi(x,\ybar,\zbar)$ be an $\cL$-formula. Fix a tuple $\bbar\in\Z^{|\zbar|}$ and let $\Delta(x)=\{\varphi(x,\abar,\bbar):\abar\in A^{|\ybar|}\}$. Assume $\Delta(x)$ is consistent. We want to show $\Delta(x)$ is realized in $\cZ$. By quantifier elimination, we may write $\varphi(x,\ybar,\zbar)$ as $\bigvee_{\alpha=1}^p\psi_{\alpha}(x,\ybar,\zbar)$, where each $\psi_\alpha(x,\ybar,\zbar)$ is a conjunction of atomic and negated atomic $\cL$-formulas. For any $\cL$-term $f(x,\ybar,\zbar)$ and integer $n>0$, the negation of $f(x,\ybar,\zbar)\equiv_n 0$ is equivalent to $\bigvee_{r=1}^{n-1}(f(x,\ybar,\zbar)-r\equiv_n 0)$. Therefore, by enlarging the tuple $\zbar$ of variables, and the tuple $\bbar$ of parameters, we may assume no $\psi_\alpha$ contains the negation of $f(x,\ybar,\zbar)\equiv_n 0$. 

Let $c^*$ be a realization of $\Delta(x)$ in some elementary extension of $\cZ$. Let $\sigma\colon A^{\ybar}\to [p]$ be such that $c^*$ realizes $\psi_{\sigma(\abar)}(x,\abar,\bbar)$. We may assume $\sigma$ is surjective. If some $\psi_\alpha$ contains a conjunct of the form $f(x,\ybar,\zbar)=0$, for some term $f(x,\ybar,\zbar)$ with nonzero coefficient on $x$, then $c^*$ is already in $\Z$ and the desired result follows. Altogether, we may assume that, for each $\alpha\in[p]$, $\psi_\alpha$ is of the form
\[
\bigwedge_{i\in I_\alpha}f_i(x,\ybar,\zbar)\neq 0\wedge\bigwedge_{j\in J_\alpha}g_j(x,\ybar,\zbar)\equiv_{n_j}0
\]
for some finite sets $I_\alpha,J_\alpha$, $\cL$-terms $f_i,g_j$, and integers $n_j>0$. Define the types
\begin{align*}
\Delta_0(x) &:= \{f_i(x,\abar,\bbar)\neq 0:i\in I_{\sigma(\abar)},~\abar\in A^{|\ybar|}\}\mand\\
\Delta_1(x) &:= \{g_j(x,\abar,\bbar)\equiv_{n_j}0:j\in J_{\sigma(\abar)},~\abar\in A^{|\ybar|}\}.
\end{align*}
Then $c^*$ realizes $\Delta_0(x)\cup\Delta_1(x)$ and $\Delta_0(x)\cup\Delta_1(x)\models\Delta(x)$. Since there are only finitely many integers $n_j$ appearing in $\Delta_1(x)$, and $\Delta_1(x)$ is consistent, we may use the Chinese Remainder Theorem to find $n\geq 1$ and $0\leq r<n$ such that $\Delta_1(x)$ is equivalent to $x\equiv_n r$. Since there are only finitely many terms $f_i$ appearing in $\Delta_0(x)$, and $\bbar$ is fixed, we may find integers $m_1,\ldots,m_t,k>0$ and $c_1,\ldots,c_t\in\Z$ such that
\[
\{m_ix-c_i\not\in\Sigma_k(\pm A):1\leq i\leq t\}\cup\{x\equiv_n r\}\models \Delta_0(x)\cup\Delta_1(x).\tag{$\dagger$}
\]
Set $B=\{x\in\Z:m_ix-c_i\in\Sigma_k(\pm A)\text{ for some $1\leq i\leq t$}\}$. By $(\dagger)$, it suffices to show $B$ does not contain $n\Z+r$. Let $m=m_1\cdot\ldots\cdot m_t$ and set $F=\{\frac{m}{m_1}c_1,\ldots,\frac{m}{m_t}c_t\}$. Given $x\in B$, there is some $1\leq i\leq t$ such that
\[
mx\in\frac{m}{m_i}(\Sigma_r(\pm A)+c_i)\seq\Sigma_{mr}(\pm A)+\frac{m}{m_i}c_i.
\]
Therefore $mB \seq C:=\Sigma_{mr}(\pm A)+F$. By Lemma \ref{lem:sparse}, we have $\udelta(C\cap\N)=0$, and so $C$ does not contain $mn\Z+mr$. So $B$ does not contain $n\Z +r$, as desired.

Conversely, suppose $A\seq\Z$ is not sufficiently sparse. Without loss of generality, assume $0\in A$. By Proposition \ref{prop:sparse}, there is some $n\geq 1$ such that $\Sigma_n(\pm A)$ contains $m\Z$ for some $m\geq 1$. Then 
\[
\Th(\cZ_A)\models\forall x(x\equiv_m0\rightarrow \exists \ybar\in A^n(x=y_1+\ldots+y_n)).\tag{$\dagger\dagger$}
\]
Let $\varphi(x,\ybar):=x\equiv_m 0\wedge x\neq y_1+\ldots+y_n$. Fix $\cN\models\Th(\cZ_A)$ and let $\Delta(x)=\{\varphi(x,\abar):\abar\in A(N)^k\}$. Then $\Delta(x)$ is consistent, since $mN$ is infinite, but $\Delta(x)$ is not realized in $\cN$ by $(\dagger\dagger)$. 
\end{proof}

Recall that $\Th(\cZ)$ is $\lambda$-stable if and only if $\lambda\geq 2^{\aleph_0}$ and, moreover, $\cZ$ does not have the finite cover property. Thus we may combine Lemma \ref{lem:sschar} with Facts \ref{fact:CaZi} and \ref{fact:CaZi2} to make the following conclusion.

\begin{corollary}\label{cor:sscor}
Suppose $A\seq\Z$ is sufficiently sparse. Then every $\cL_A$-formula is bounded modulo $\Th(\cZ_A)$. Moreover, for any $\lambda\geq 2^{\aleph_0}$, $\Th(\cZ_A)$ is $\lambda$-stable if and only if $\Th(A^{\indd})$ is $\lambda$-stable.
\end{corollary}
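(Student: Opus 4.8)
The plan is simply to assemble the pieces already in place. First, by Lemma \ref{lem:sschar}, the hypothesis that $A$ is sufficiently sparse is equivalent to $A$ being sufficiently small. We then invoke Fact \ref{fact:CaZi2} with $\cM=\cZ$: its hypotheses are that $\Th(\cZ)$ is stable, which holds by Fact \ref{fact:Z}$(b)$ (indeed $\Th(\cZ)$ is superstable), and that $\cZ$ does not have the finite cover property over $A$, which holds by Fact \ref{fact:Z}$(c)$. Hence every $\cL_A$-formula is bounded modulo $\Th(\cZ_A)$, which is the first assertion.

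For the second assertion, apply Fact \ref{fact:CaZi}, again with $\cM=\cZ$. We have just verified that every $\cL_A$-formula is bounded modulo $\Th(\cZ_A)$, so Fact \ref{fact:CaZi} gives, for every cardinal $\lambda\geq|\Th(\cZ)|$, that $\Th(\cZ_A)$ is $\lambda$-stable if and only if both $\Th(\cZ)$ and $\Th(A^{\indd})$ are $\lambda$-stable. Since $\cL$ is countable, $|\Th(\cZ)|=\aleph_0\leq 2^{\aleph_0}$, so this applies to every $\lambda\geq 2^{\aleph_0}$. Finally, by the standard fact recalled just before the statement, $\Th(\cZ)$ is $\lambda$-stable precisely when $\lambda\geq 2^{\aleph_0}$; thus for such $\lambda$ the condition ``$\Th(\cZ)$ is $\lambda$-stable'' is automatic, and $\Th(\cZ_A)$ is $\lambda$-stable if and only if $\Th(A^{\indd})$ is $\lambda$-stable.

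There is no real obstacle here: the corollary is a direct synthesis of Lemma \ref{lem:sschar} with the imported results of Casanovas and Ziegler and the known model theory of $(\Z,+,0)$. The only point requiring a moment's care is matching the cardinal bounds --- confirming $|\Th(\cZ)|\leq 2^{\aleph_0}$ so that Fact \ref{fact:CaZi} is available for every $\lambda\geq 2^{\aleph_0}$, and recalling exactly which $\lambda$ witness $\lambda$-stability of $\Th(\cZ)$.
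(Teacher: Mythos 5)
Your proof is correct and follows exactly the paper's route: Lemma \ref{lem:sschar} to pass from sufficiently sparse to sufficiently small, Fact \ref{fact:CaZi2} (with Fact \ref{fact:Z}) to get boundedness of all $\cL_A$-formulas, and Fact \ref{fact:CaZi} together with the $\lambda$-stability spectrum of $\Th(\cZ)$ for the equivalence. Nothing is missing.
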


Note, in particular, that if $A\seq\Z$ is sufficiently sparse and infinite, then $A$ is not definable in $\cZ$ by Fact \ref{fact:Z}$(d)$. Therefore, combining Corollary \ref{cor:sscor} with Corollary \ref{cor:UZ}, we obtain Corollary \ref{Acor} from the introduction.

\begin{corollary}\label{cor:Acor}
If $A\seq\Z$ is sufficiently sparse and $\Th(A^{\indd})$ is stable, then $\Th(\cZ_A)$ is stable and $\omega\leq U(\cZ_A)\leq U(A^{\indd})\cdot\omega$.
\end{corollary}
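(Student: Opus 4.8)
The plan is to obtain Corollary \ref{cor:Acor} by chaining together the results already assembled in the excerpt. The statement says: if $A\seq\Z$ is sufficiently sparse and $\Th(A^{\indd})$ is stable, then $\Th(\cZ_A)$ is stable and $\omega\leq U(\cZ_A)\leq U(A^{\indd})\cdot\omega$. There are really two assertions here — stability and the $U$-rank bounds — and both have already been done modulo a short assembly, so the ``proof'' is essentially a three-line deduction. I will first record the hypothesis $A$ sufficiently sparse and invoke Corollary \ref{cor:sscor}: this gives both that every $\cL_A$-formula is bounded modulo $\Th(\cZ_A)$, and that $\Th(\cZ_A)$ is $\lambda$-stable iff $\Th(A^{\indd})$ is $\lambda$-stable for every $\lambda\geq 2^{\aleph_0}$.

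From there the stability of $\Th(\cZ_A)$ is immediate: by hypothesis $\Th(A^{\indd})$ is stable, so it is $\lambda$-stable for some $\lambda$, and (since $\Th(\cZ)$ is itself only stable in cardinals $\lambda\geq 2^{\aleph_0}$, and $A^{\indd}$ is interpretable in a structure of size continuum) one may take $\lambda\geq 2^{\aleph_0}$; then Corollary \ref{cor:sscor} transfers $\lambda$-stability to $\Th(\cZ_A)$, so $\Th(\cZ_A)$ is stable. For the $U$-rank bounds I would next note that, since $A$ is sufficiently sparse and infinite (the infinite case being the only one where the claim has content — if $A$ is finite the $U$-rank statement is vacuous or trivial, and indeed one should either assume $A$ infinite or note $\cZ_A$ is then just $\cZ$ with a few constants), Fact \ref{fact:Z}$(d)$ tells us $A$ is not definable in $\cZ$. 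So the hypotheses of Corollary \ref{cor:UZ} are met: $A$ is not definable in $\cZ$, and every $\cL_A$-formula is bounded modulo $\Th(\cZ_A)$. Corollary \ref{cor:UZ} then yields exactly $\omega\leq U(\cZ_A)\leq U(A^{\indd})\cdot\omega$, completing the proof.

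Since every ingredient is already proved earlier in the paper, there is no genuine obstacle here — the corollary is purely a packaging statement. The only points requiring a moment's care are bookkeeping ones: making sure the cardinal $\lambda$ witnessing stability of $\Th(A^{\indd})$ can be taken $\geq 2^{\aleph_0}$ so that Corollary \ref{cor:sscor} applies (which is automatic, as stability is preserved under increasing $\lambda$ past the continuum for a countable-language theory with a stable reduct of this kind, or simply because $\Th(\cZ)$ is $2^{\aleph_0}$-stable and one invokes Fact \ref{fact:CaZi} directly at $\lambda = 2^{\aleph_0}$), and handling the trivial finite-$A$ case separately or excluding it. Everything else is a direct citation of Corollaries \ref{cor:sscor} and \ref{cor:UZ}.
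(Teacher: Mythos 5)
Your proof is correct and follows exactly the paper's route: the paper likewise derives this corollary by combining Corollary \ref{cor:sscor} with Corollary \ref{cor:UZ}, noting via Fact \ref{fact:Z}$(d)$ that a sufficiently sparse infinite set is not definable in $\cZ$. Your remarks on the cardinal $\lambda$ and the (implicitly excluded) finite case are sound bookkeeping that the paper leaves tacit.
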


Note that there are plenty of sets $A\seq\Z$, which are not sufficiently sparse, but such that $\Th(\cZ_A)$ is stable. In particular, no infinite $\cZ$-definable subset $A\seq\Z$ can be sufficiently sparse (by Fact \ref{fact:Z}$(d)$). For this reason, we will mainly focus on subsets $A\seq\Z$ that are either bounded above or bounded below. Given such a set $A$, we say the sequence $(a_n)_{n=0}^\infty$ is a \emph{monotonic enumeration of $A$} if $A=\{a_n:n\in\N\}$ and $(a_n)_{n\in\N}$ is either strictly increasing or strictly decreasing. Any set of this form is interdefinable, modulo $\cZ$, with a subset of $\N$ and so, since our main goal is understanding stability of $\Th(\cZ_A)$, we will often focus on subsets of $\N$. As we have seen above, if $A\seq\Z$ is sufficiently sparse, then $\cL_A$-formulas are bounded modulo $\Th(\cZ_A)$, and so stability of $\Th(\cZ_A)$ reduces to stability of $\Th(A^{\indd})$. On the other hand, as we will see in the last section, for a subset $A$ of $\N$, stability of $\Th(\cZ_A)$ already implies that $A$ must be fairly sparse. This motivates the following question.

\begin{question}\label{ques:ssbound}
Is there a set $A\seq\N$ such that $\Th(\cZ_A)$ is stable, but is not the case that every $\cL_A$-formula is bounded modulo $\Th(\cZ_A)$?
\end{question}

Finally, it is worth pointing out that whether a set $A\seq\N$ is sufficiently sparse is not controlled only by the growth rate of the sequence, by also by the sumset structure. For example, if $A\seq\N$ is enumerated by $a_n=2^n+n$ then, as a sequence, $A$ grows like the powers of $2$ (which are sufficiently sparse and yield a stable expansion of $\cZ$ \cite{PaSk}). But $A$ is not sufficiently sparse, since any positive integer $n$ is of the form $2a_n-a_{n+1}+1$, and so $\Sigma_3(\pm A)=\Z$. In fact, $\Th(\cZ_A)$ is unstable (see Question \ref{ques:multbig}).

\section{Induced structure}\label{sec:induced}

In the last section, we gave a combinatorial condition on sets $A\seq\Z$, which ensures $\cL_A$-formulas are bounded in $\cZ_A$. The next step toward determining stability of $\Th(\cZ_A)$ is to analyze the induced structure $A^{\indd}$.  First, we use quantifier elimination for $\Th(\cZ)$ in the language $\cL$ to isolate a sublanguage $\cL^{\indd}$ of $\cL^*$ such that, for any $A\seq\Z$, $A^{\indd}$ is a definitional expansion of its reduct to $\cL^{\indd}$.

\begin{definition}
$~$
\begin{enumerate}
\item Let $\cL^{\indd}_0$ be the set of relations $R_\varphi\in \cL^*$, where $\varphi$ is either $x=0$ or an atomic $\cL$-formula of the form
\[
x_1+\ldots+x_k=y_1+\ldots+y_l,
\]
for some variables $x_1,\ldots,x_k,y_1,\ldots,y_l$.
\item Let $\cL^{\indd}=\cL^{\indd}_0\cup\{C_{m,r}:0\leq r<m<\omega\}$, where $C_{m,r}$ is the unary relation $R_{x\equiv_m r}\in\cL^*$.
\end{enumerate}
\end{definition}

\begin{proposition}\label{prop:unAP}
Suppose $A\seq\Z$. For any $\cL$-formula $\varphi(\xbar)$, there is a quantifier-free $\cL^{\indd}$-formula $\chi(\xbar)$ such that $A^{\indd}\models \forall\xbar(R_{\varphi}(\xbar)\leftrightarrow\chi(\xbar))$.
\end{proposition}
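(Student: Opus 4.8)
The plan is to leverage quantifier elimination for $\Th(\cZ)$ in the language $\cL$ (Fact \ref{fact:Z}$(a)$) to reduce from an arbitrary $\cL$-formula $\varphi(\xbar)$ to atomic and negated-atomic pieces, and then show that, on the structure $A^{\indd}$, each such piece is equivalent to a quantifier-free $\cL^{\indd}$-formula. Concretely, first I would fix $\varphi(\xbar)$ and apply quantifier elimination to write $\varphi$ modulo $\Th(\cZ)$ as a Boolean combination of atomic $\cL$-formulas in $\xbar$; since $A^{\indd}$ interprets $R_\varphi$ as $\varphi(A^n)$ and interpretation commutes with Boolean connectives, it suffices to produce, for each atomic $\cL$-formula $\theta(\xbar)$, a quantifier-free $\cL^{\indd}$-formula $\chi_\theta(\xbar)$ with $A^{\indd}\models\forall\xbar(R_\theta(\xbar)\leftrightarrow\chi_\theta(\xbar))$, and then close under $\neg,\vee,\wedge$.

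Next I would enumerate the atomic $\cL$-formulas. Recalling that $\cL$-terms are linear combinations of variables with integer coefficients, an atomic $\cL$-formula $\theta(\xbar)$ is either an equality $t(\xbar)=s(\xbar)$ or a congruence $t(\xbar)\equiv_m s(\xbar)$. By moving all terms to one side and clearing signs (using that $=$ and $\equiv_m$ are translation-invariant modulo $\Th(\cZ)$), $\theta$ is equivalent to one of the forms
\[
\sum_{i} c_i x_i = 0
\qquad\text{or}\qquad
\sum_i c_i x_i \equiv_m 0,
\]
with $c_i\in\Z$. The first task is to handle the case of arbitrary integer coefficients $c_i$, whereas $\cL^{\indd}_0$ only provides the relations $R_\varphi$ for $\varphi$ of the special shape $x_1+\ldots+x_k=y_1+\ldots+y_l$ (i.e.\ all coefficients $\pm1$). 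For an equality $\sum c_i x_i = 0$, I would introduce no new variables but instead observe that $c x$ (for $c>0$) as a formal sum of $c$ copies of the single variable $x$ is exactly what the atomic formula $x_1+\ldots+x_k = y_1+\ldots+y_l$ ranges over when we substitute $x$ into $c$ of its argument slots; so $\sum_{i:c_i>0} c_i x_i = \sum_{i:c_i<0} |c_i| x_i$ is literally $R_\theta'$ for the corresponding $\cL^{\indd}_0$-relation $\theta'$ with variables repeated according to multiplicities. For the congruence $\sum c_i x_i \equiv_m 0$: reducing each $c_i$ modulo $m$ to a representative in $\{0,\ldots,m-1\}$, the condition becomes $\sum_i \bar c_i x_i \equiv_m 0$ with $0\le \bar c_i<m$; I would then express this by cases on the residues $r_i$ of the $x_i$ modulo $m$, i.e.\ as a finite disjunction over tuples $(r_1,\ldots,r_n)$ with $\sum \bar c_i r_i\equiv_m 0$ of the conjunctions $\bigwedge_i C_{m,r_i}(x_i)$, which is a quantifier-free $\cL^{\indd}$-formula. (Here I use that $C_{m,r}=R_{x\equiv_m r}$ is exactly the unary predicate on $A$ picking out elements congruent to $r$ mod $m$, and that the residue of a fixed integer linear combination depends only on the residues of the inputs.)

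The main obstacle, and the only place requiring care, is the bookkeeping in the equality case: matching an atomic $\cL$-formula with genuine integer coefficients to one of the "all coefficients $1$" atomic formulas in $\cL^{\indd}_0$ by repeating variables, and checking this substitution is legitimate at the level of the $\cL^*$-relations (the relation $R_\varphi$ on $A^{\indd}$ is by definition $\varphi$ evaluated with the $x_i$ ranging over $A$, so substituting a single variable into several argument positions of $R_\varphi$ does correctly compute $\varphi$ with the corresponding coefficients). A degenerate subcase is when $\theta$ is $t(\xbar)=0$ with $t$ identically zero or has only one sign of coefficients present — then $\theta$ reduces to $x=0$ conjunctions (covered by $R_{x=0}\in\cL^{\indd}_0$) or is trivially true/false, and I would dispatch these separately. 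Once every atomic formula is matched, closing under Boolean combinations gives the desired quantifier-free $\cL^{\indd}$-formula $\chi(\xbar)$ for the original $\varphi$, completing the proof; this also yields the remark in the text that $A^{\indd}$ is a definitional expansion of its reduct to $\cL^{\indd}$.
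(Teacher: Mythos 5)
Your proposal follows essentially the same route as the paper: quantifier elimination reduces to atomic formulas, congruences $\sum c_ix_i\equiv_m 0$ are expressed as a finite disjunction over residue tuples of conjunctions of the $C_{m,r_i}(x_i)$ (the paper's disjunction is over the set of \emph{realized} residue tuples, yours over all tuples satisfying the congruence, but these define the same subset of $A^n$), and equalities are absorbed into the coefficient-one relations of $\cL^{\indd}_0$ by repeating variables. The only slip is in your "degenerate subcase": when all nonzero coefficients of $\sum c_ix_i=0$ have the same sign, the formula does \emph{not} reduce to a conjunction of $x_i=0$ for general $A\seq\Z$ (e.g.\ $x_1+x_2=0$ is satisfied by $(1,\nv 1)$ if $A$ contains both signs); this is easily repaired by adding a variable to both sides, e.g.\ rewriting $x_1+x_2=0$ as $x_1+x_2+x_1=x_1$, which is again a repeated-variable substitution into a relation of $\cL^{\indd}_0$. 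With that one-line fix the argument is complete and matches the paper's.
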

\begin{proof}
By quantifier elimination for $\Th(\cZ)$ with respect to the language $\cL$, we may assume $\varphi$ is an atomic $\cL$-formula. By construction of $\cL^{\indd}$, we may assume $\varphi(\xbar)$ is of the form 
\[
b_1x_1+\ldots+b_kx_k\equiv_n 0,
\]
 where $n>0$ and $b_1,\ldots,b_k\in\{0,\ldots,n-1\}$. Let $I$ be the set of tuples $\rbar\in\{0,\ldots,n-1\}^k$ such that there is $\abar\in\varphi(A^k)$ with $a_i\equiv_n r_i$ for all $i\in[k]$. Note that, for any $\rbar\in I$ and $\abar\in A^k$, if $a_i\equiv_n r_i$ for all $1\leq i\leq k$ then $\abar\in\varphi(A^k)$. Therefore
\[
A^{\indd}\models \forall \xbar\left(R_\varphi(\xbar)\leftrightarrow\bigvee_{\rbar\in I}\bigwedge_{i=1}^k C_{n,r_i}(x_i)\right).\qedhere
\]
\end{proof}

\begin{convention}
Given $A\seq\Z$, we use the previous proposition to identify the $\cL^*$-structure $A^{\indd}$ with its reduct to the language $\cL^{\indd}$.
\end{convention}

\begin{definition}
Given $A\seq\Z$, define $A^{\indd}_0$ to be the reduct of $A^{\indd}$ to $\cL^{\indd}_0$.
\end{definition}

To summarize the situation, consider a set $A\seq\Z$. We build $A^{\indd}$ in two steps. The first is $A^{\indd}_0$, which describes the $A$-solutions to homogeneous linear equations. Then $A^{\indd}$ is an expansion of $A^{\indd}_0$ by \emph{unary predicates} for $A\cap(m\Z+r)$, for all $0\leq r<m<\infty$. We will later show that for certain sets $A\seq\N$, $A^{\indd}_0$ is interpretable in a structure $\cN$ satisfying the property that \emph{any} expansion of $\cN$ by unary predicates is stable. This will allow us to conclude stability of $A^{\indd}$ without further analysis of the expansion of $A^{\indd}_0$ to $A^{\indd}$. The next definitions make this precise.

\begin{definition}
Given a first-order structure $\cM$ (in some language), we let $\cM^1$ denote the expansion of $\cM$ by unary predicates for all subsets of $M$.
\end{definition}

\begin{definition}
Let $\cA$ be an $\cL_1$-structure with universe $A$ and let $\cB$ be an $\cL_2$-structure with universe $B$. We say $\cA$ is an \textbf{interpretable reduct} of $\cB$ if there is a bijection $f\colon A\to B$ such that, for any $\cL_1$-formula $\varphi(x_1,\ldots,x_n)$, the set $f(\varphi(A^n))\seq B^n$ is $\cL_2(B)$-definable.
\end{definition}

Note that if a structure $\cA$ is an interpretable reduct of another structure $\cB$, then $\cA^1$ is an interpretable reduct of $\cB^1$. Thus we can summarize as follows.

\begin{corollary}\label{cor:monstH}
Suppose $A\seq\Z$. Then $A^{\indd}$ is an expansion of $A^{\indd}_0$ by unary predicates. Therefore, if $A^{\indd}_0$ is an interpretable reduct of a structure $\cN$, and $\cN^1$ is stable (of $U$-rank $\alpha$), then $A^{\indd}$ is stable (of $U$-rank at most $\alpha$).
\end{corollary}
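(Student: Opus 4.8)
The statement to prove is Corollary~\ref{cor:monstH}, which has three assertions bundled together: first, that $A^{\indd}$ is an expansion of $A^{\indd}_0$ by unary predicates; second, that an interpretable reduct relationship is inherited by the ``expansion by all subsets'' operation $\cM\mapsto\cM^1$; and third, that stability (and a $U$-rank bound) transfers accordingly. The plan is to dispatch each in turn, since each is genuinely routine given the machinery already set up.

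For the first assertion, I would simply unwind definitions: by construction $\cL^{\indd}=\cL^{\indd}_0\cup\{C_{m,r}:0\leq r<m<\omega\}$, where each $C_{m,r}$ is a \emph{unary} relation symbol, and by the Convention following Proposition~\ref{prop:unAP} we identify $A^{\indd}$ with its reduct to $\cL^{\indd}$. So $A^{\indd}$ is literally $A^{\indd}_0$ together with interpretations of the unary predicates $C_{m,r}$, which is exactly an expansion of $A^{\indd}_0$ by unary predicates. For the second assertion — the remark preceding the corollary, that if $\cA$ is an interpretable reduct of $\cB$ then $\cA^1$ is an interpretable reduct of $\cB^1$ — I would take the bijection $f\colon A\to B$ witnessing the reduct relation and observe that $f$ still witnesses it for the expanded structures: any $\cL_1$-formula-definable set pushes forward under $f$ to an $\cL_2(B)$-definable set by hypothesis, and any unary predicate $P\seq A$ of $\cA^1$ pushes forward to $f(P)\seq B$, which is a subset of $B$ and hence (a unary predicate) definable in $\cB^1$; closure under the formula-building operations then gives that every $\cA^1$-formula-definable set maps to a $\cB^1(B)$-definable set. (There's a trivial saturation-style subtlety if one demands the ambient parameter sets match, but the definition given is over the full universe $B$, so this is automatic.)

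For the third assertion, assume $A^{\indd}_0$ is an interpretable reduct of $\cN$ and $\cN^1$ is stable of $U$-rank $\alpha$. By the second assertion, $(A^{\indd}_0)^1$ is an interpretable reduct of $\cN^1$; since stability and $U$-rank bounds pass to interpretable reducts (a reduct — even an interpretable one — of a stable theory is stable, and the $U$-rank of a definable set cannot exceed that of the ambient structure), $(A^{\indd}_0)^1$ is stable of $U$-rank at most $\alpha$. Finally, by the first assertion $A^{\indd}$ is an expansion of $A^{\indd}_0$ by unary predicates, hence a reduct of $(A^{\indd}_0)^1$, so $A^{\indd}$ is stable of $U$-rank at most $\alpha$ as well.

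The only place requiring any care — and it is more a matter of bookkeeping than a real obstacle — is making precise that ``interpretable reduct'' genuinely transfers through $\cM\mapsto\cM^1$ without an increase in complexity; the key point is that a unary predicate on $A$ transported by the bijection $f$ is still just a unary predicate on $B$, so no new definitional resources beyond those already present in $\cN^1$ are needed. Everything else is a direct appeal to the standard fact that reducts preserve stability and do not raise $U$-rank, together with the definitional chasing above; no new ideas are involved, which is why the corollary is stated without a separate proof in the text and I would present it as an immediate consequence of the preceding definitions and remark.
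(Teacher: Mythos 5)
Your proof is correct and follows exactly the route the paper intends: the paper gives no explicit proof, relying on the definition of $\cL^{\indd}$ (the $C_{m,r}$ are unary) for the first assertion and on the remark that interpretable reducts pass through $\cM\mapsto\cM^1$ for the second, combined with the standard facts that stability and $U$-rank bounds are preserved under (interpretable) reducts. Your write-up is just a careful expansion of that same argument.
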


In this paper, all stability results for structures of the form $A^{\indd}$, for some $A\seq\Z$, will be achieved via the previous corollary. So we ask the following question.

\begin{question}
Is there a set $A\seq\Z$ such that $A^{\indd}_0$ is stable and $A^{\indd}$ is unstable?
\end{question}

In \cite[Section 13.3]{Pobook}, it is observed that if the language of $\cM$ contains only equality then $\cM^1$ is stable. The next result gives more examples of such structures, which will be sufficient for our analysis.

\begin{proposition}\label{prop:unaryQE}
Suppose $\cM$ is an infinite first-order structure, in a language containing only a unary function symbol $s$, such that $s^\cM$ is an injective aperiodic function. Then $\cM^1$ is superstable of $U$-rank $1$.
\end{proposition}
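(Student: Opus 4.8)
The plan is to prove quantifier elimination for $\Th(\cM^1)$ in a convenient language, then read off superstability by counting types and $U$-rank $1$ by a short forking analysis. Throughout, write $\cL_1=\{s\}\cup\{P_X:X\seq M\}$, where $P_X$ is the unary predicate naming $X$, so that $\cM^1$ is an $\cL_1$-structure and $|\cL_1|=2^{|M|}=|\Th(\cM^1)|$. Since $s^{\cM}$ is injective and aperiodic, $\cM$ (as an $\{s\}$-structure) is a disjoint union of chains, each isomorphic to $(\Z,x\mapsto x+1)$ or $(\N,x\mapsto x+1)$; the same holds in every model of $\Th(\cM^1)$, as injectivity and aperiodicity are first-order. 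Also, singletons $\{c\}$ ($c\in M$) are among the $P_X$, so $M\seq\acl(\emptyset)$ in $\cM^1$, and any relation of the form $s^a(x)=c$ with $c\in M$ is encoded by a predicate $P_X(x)$, $X\seq M$.

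The first and main step is: $\Th(\cM^1)$ eliminates quantifiers in $\cL_1$. I would prove this by eliminating one existential quantifier from $\exists y\,\theta(y,\xbar)$ with $\theta$ a conjunction of literals. Using injectivity and aperiodicity, each literal involving $y$ can be put into one of the forms $y=s^c(x_i)$, $s^c(y)=x_i$ with $c\geq1$, $y\neq s^d(x_i)$, or $\pm P_{X}(y)$ for some $X\seq M$ (absorbing iterates of $s$ via $P_X(s^a(y))\leftrightarrow P_{s^{-a}X}(y)$). The crucial point is that $\cL_1$ has a predicate for \emph{every} subset of $M$, so all the set operations that arise — finite intersections, complements, direct and inverse images under $s^c$, and the question of whether a given $X\seq M$ is finite (and which singletons it contains) — remain inside $\cL_1$, and the relevant equivalences (e.g. $P_{s^cX}(z)\leftrightarrow\exists w(s^c(w)=z\wedge P_X(w))$ and $P_{\operatorname{Im}(s^c)}(z)\leftrightarrow\exists w\,s^c(w)=z$) are true in $\cM$, hence are consequences of $\Th(\cM^1)$. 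The elimination then splits into three cases: (i) a literal pins $y=s^c(x_i)$, and we substitute; (ii) some literal $s^c(y)=x_i$ ($c\geq1$) appears, so $y$ is the unique $s^c$-preimage of $x_i$ when one exists, and $\exists y$ collapses to $P_{s^cY}(x_i)$ together with quantifier-free side conditions; (iii) no positive equation on $y$ survives, so one asks for $y\in Y$ (some fixed $Y\seq M$) avoiding finitely many values $s^d(x_j)$, which is equivalent to $\top$ if $Y$ is infinite and to an explicit finite disjunction over the elements of $Y$ otherwise. The careful bookkeeping in these cases — making sure every parameter-dependent condition genuinely collapses to a quantifier-free $\cL_1$-formula — is the part I expect to be the real work; the rest is soft.

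Granting quantifier elimination, a complete $1$-type $p(x)$ over a set $B$ is determined by the ultrafilter $U_p=\{X\seq M:P_X(x)\in p\}$ on $\mathcal{P}(M)$ (which in particular records whether and where $x$ lies in $M$) together with the equations $s^a(x)=s^b(b')$, $b'\in B$, that it contains. If $p$ is non-algebraic, then $x$ lies in a chain disjoint from $B$, so all such equations are negated and $p$ is determined by $U_p$ alone, with $U_p$ necessarily non-principal; conversely, every non-principal ultrafilter on $\mathcal{P}(M)$ arises, by saturation (each $X\in U$ is infinite, hence has large interpretation in the monster, and one chooses a point in a fresh chain). Thus the non-algebraic $1$-types over $B$ correspond bijectively to the non-principal ultrafilters on $\mathcal{P}(M)$, \emph{independently of $B$}, so there are at most $2^{2^{|M|}}=2^{|\Th(\cM^1)|}$ of them, while the algebraic $1$-types over $B$ number at most $|B|\cdot\aleph_0$. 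The same analysis for $n$-tuples (tracking the pattern of coincidences among chains, the algebraic data, and one non-principal ultrafilter per fresh chain) gives $|S_n(B)|\leq|B|+2^{|\Th(\cM^1)|}$. Hence $\Th(\cM^1)$ is $\lambda$-stable for every $\lambda\geq 2^{|\Th(\cM^1)|}$, so it is superstable.

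For the $U$-rank, note that the description above shows a non-algebraic $1$-type $p$ over $C$ has a \emph{unique} non-algebraic extension to any $B\supseteq C$, namely the type with the same $(P_X)$-part. Since $\Th(\cM^1)$ is superstable, $U$-ranks are ordinals; a non-forking extension of $p$ has the same $U$-rank as $p$, which is at least $1$ because $p$ is non-algebraic, hence that non-forking extension is itself non-algebraic and so coincides with the unique non-algebraic extension. Therefore every non-algebraic extension of $p$ is non-forking, so no $1$-type admits a forking extension of $U$-rank $\geq1$; that is, $U(p)\leq1$ for all $1$-types $p$. As $M$ is infinite there do exist non-algebraic $1$-types, so the $U$-rank is exactly $1$, and $\cM^1$ is superstable of $U$-rank $1$.
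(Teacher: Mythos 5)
Your argument is correct, but it executes both halves differently from the paper. For quantifier elimination, the paper first expands $\cM^1$ by a function symbol for the (partial) inverse of $s$ and by constants for $M\setminus s^\cM(M)$, and then verifies QE by the two-models-over-a-common-substructure criterion; the key device there is that, after normalization, the only genuinely existential content is a parameter-free formula $\psi(x)$ minus finitely many excluded points $s^j(a_i)$, and the cardinality identities $|\psi(\cN_1)|=|\psi(\cN_2)|$ and $|F\cap\psi(\cN_1)|=|F\cap\psi(\cN_2)|$ settle the finite/infinite dichotomy without ever naming the elements of a finite $\psi$. You instead eliminate quantifiers syntactically in the unexpanded language, absorbing inverse images, images, and individual elements into the predicates $P_X$; this works, and your case (iii) does by hand (via the singleton predicates and the finite/infinite split on $Y$) exactly what the paper's counting trick does implicitly. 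For the endgame, the paper invokes quasi-strong minimality and \cite{BPW} to get superstability and $U$-rank $1$ in one stroke, whereas you derive both directly: type counting via ultrafilters on $\mathcal{P}(M)$, and uniqueness of non-algebraic extensions for the rank. Your route is longer but self-contained and yields a sharper picture of the type space. Two small bookkeeping points, neither of which affects your conclusions: your normal-form list omits the negated literals $s^c(y)\neq x_i$ with $c\geq 1$, which in case (iii) each exclude at most one further point (the $s^c$-preimage of $x_i$, if it exists); and the algebraic $1$-types over $B$ number $|M|+|B|\cdot\aleph_0$ rather than $|B|\cdot\aleph_0$, since every element of $M$ is algebraic over $\emptyset$ via its singleton predicate --- this is still at most $2^{|\Th(\cM^1)|}$, so the stability count stands.
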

\begin{proof}
Set $C=M\backslash s^\cM(M)$. Let $\hat{\cM}$ be the expansion of $\cM^1$ by  constant symbols for all elements of $C$ along with a unary function which is the inverse of $s^\cM$ on $s^\cM(M)$ and the identity on $C$. We will show that $\Th(\hat{\cM})$ has quantifier elimination. From this, it follows that $\Th(\hat{\cM})$ is \emph{quasi strongly minimal} and therefore superstable of $U$-rank $1$ (see \cite{BPW}). 

Let $\cN_1$ and $\cN_2$ be models of $\Th(\hat{\cM})$, and suppose $\cA$ is a common substructure. Fix a quantifier-free formula $\varphi(x,y_1,\ldots,y_n)$ and a tuple $\abar\in A^n$. Suppose $\cN_1\models\exists x\varphi(x,\abar)$. We want to show $\cN_2\models\exists x\varphi(x,\abar)$. We may assume $\varphi(x,\ybar)$ is of the form
$$
\psi(x)\wedge \bigwedge_{i=1}^n\bigwedge_{j=\nv k}^kx\neq s^j(y_i),
$$
where $k\in\N$ and $\psi(x)$ is the conjunction of $s\inv(x)\neq x$ and a finite Boolean combination of terms of the form $P(s^i(x))$, where $P$ is a unary relation and $i\in\Z$. Let $F=\{s^j(a_i):1\leq i\leq n,~\nv k\leq j\leq k\}\seq A$ (note $s^\cA=s^{\cN_1}|_A=s^{\cN_2}|_A$). We want to show $\psi(\cN_2)\backslash F\neq\emptyset$, and thus we may assume $\psi(\cN_2)$ is finite. Since $\psi(x)$ is a formula with no parameters, it follows that $|\psi(\cN_2)|=|\psi(\cN_1)|$. Since $\cA$ is a substructure of $\cN_1$ and $\cN_2$, and $\psi(x)$ is quantifier-free, we have $|F\cap\psi(\cN_1)|=|F\cap\psi(\cN_2)|$. Since $\cN_1\models\exists x\varphi(x,\abar)$, we have $|\psi(\cN_1)|>|F\cap\psi(\cN_1)|$. Altogether, this implies $|\psi(\cN_2)|>|F\cap\psi(\cN_2)|$, as desired.
\end{proof}

\section{Geometrically sparse sets}\label{sec:GS}

In this section, we define the notion of a \emph{geometrically sparse} subset $A\seq\Z$. Our ultimate goal is show that $\Th(\cZ_A)$ is stable whenever $A$ is geometrically sparse. Part of the motivation for the definition is to give a common generalization of the following examples analyzed in \cite{PaSk}. 

\begin{fact}\textnormal{(Palac\'{i}n and Sklinos \cite{PaSk})}\label{fact:exPaSk}
Suppose $A$ is the set $\Fac:=\{n!:n\in\N\}$ of factorial numbers or the set of powers
\[
\Pi(q_1,\ldots,q_t):=\left\{q_1^{\iddots^{ q_t^n}}:n\in\N\right\}
\]
where $q_1,\ldots,q_t\geq 2$ and $t\geq 1$.  Then $\Th(\cZ_A)$ is superstable of $U$-rank $\omega$.
\end{fact}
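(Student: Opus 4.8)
The statement to prove is Fact \ref{fact:exPaSk}, which asserts that for $A$ equal to the factorials $\Fac$ or the iterated powers $\Pi(q_1,\ldots,q_t)$, the theory $\Th(\cZ_A)$ is superstable of $U$-rank $\omega$. Since this is attributed to Palac\'{i}n and Sklinos and will ultimately be subsumed by the general machinery (Corollaries \ref{cor:Acor} and \ref{cor:monstH}), the natural plan is to derive it from the tools already assembled rather than reproving it from scratch.

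\textbf{The plan is as follows.} First I would verify that each of these sets $A$ is sufficiently sparse in the sense of Definition preceding Proposition \ref{prop:sparse}, i.e. that $\udelta(\Sigma_n(\pm A)\cap\N)=0$ for every $n\geq 1$; equivalently (by Proposition \ref{prop:sparse}) that no finitary sumset $\Sigma_n(\pm A)$ contains a nontrivial subgroup of $\Z$. For the factorials and the iterated-power sets this follows from their extremely rapid growth: if $(a_k)$ enumerates $A$ in increasing order, then $a_{k+1}/a_k\to\infty$, and a standard counting argument shows that any $n$-fold sumset of such a set has $\udelta$ equal to zero — the number of elements of $\Sigma_n(\pm A)$ below $N$ is polynomial in $\log N$ rather than linear in $N$. (In fact, as the introduction notes, any $A\seq\N$ with $a_{k+1}/a_k\to\infty$ is geometrically sparse, hence sufficiently sparse by the later sections; but for this Fact one only needs the density computation directly.) By Corollary \ref{cor:sscor}, every $\cL_A$-formula is then bounded modulo $\Th(\cZ_A)$, and $\Th(\cZ_A)$ is $\lambda$-stable (for $\lambda\geq 2^{\aleph_0}$) if and only if $\Th(A^{\indd})$ is.

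\textbf{Next} I would show that $\Th(A^{\indd})$ is superstable of $U$-rank $1$. By Corollary \ref{cor:monstH}, it suffices to exhibit an interpretable reduct of $A^{\indd}_0$ inside a structure $\cN$ such that $\cN^1$ is stable of $U$-rank $1$, and for this Proposition \ref{prop:unaryQE} is tailor-made: take $\cN$ to be $(\N,s)$ with $s$ the successor function (injective and aperiodic), so $\cN^1$ is superstable of $U$-rank $1$. The content is therefore to check that the solution sets of homogeneous linear equations $x_1+\ldots+x_k=y_1+\ldots+y_l$ in $A$ are definable from the successor structure on the index set $\N$ (under the bijection $k\mapsto a_k$). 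This is where the specific arithmetic of $\Fac$ and $\Pi(q_1,\ldots,q_t)$ enters: for these sequences the growth is fast enough that any solution to such an equation must be ``trivial'' in a strong uniform sense — roughly, the multiset of indices on the left coincides with the multiset on the right, possibly after collapsing a bounded-size initial segment of $A$. One shows this by a greatest-term argument: comparing the largest index appearing on each side and using $a_{k+1}>k\cdot a_k$ (or the analogous inequality for iterated powers) to force cancellation, then inducting downward. Consequently $\varphi(A^k)$ is, under the index bijection, a finite Boolean combination of diagonals $\{x_i=x_j\}$ together with conditions confining finitely many coordinates to a fixed finite set — all of which are quantifier-free definable in $(\N,s)$. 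Hence $A^{\indd}_0$ is an interpretable reduct of $(\N,s)$, $A^{\indd}$ is stable of $U$-rank $\leq 1$ by Corollary \ref{cor:monstH}, and $\Th(\cZ_A)$ is stable with $\omega\leq U(\cZ_A)\leq 1\cdot\omega=\omega$ by Corollary \ref{cor:Acor}, giving $U(\cZ_A)=\omega$ exactly.

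\textbf{The main obstacle} is the second step: proving that homogeneous linear equations have only trivial solutions in these sequences, uniformly enough to read them off from the successor structure. For the iterated powers $\Pi(q_1,\ldots,q_t)$ one must be a little careful because distinct coefficient patterns can interact (e.g. low-order terms combining to match a higher one), so the ``bounded exceptional set'' has to be handled explicitly rather than waved away; the clean statement is that there is a finite $F\seq A$ such that outside $F$ the sequence is a \emph{dissociated}-type set for the relevant equations. The density computation in the first step is routine by comparison, and the appeal to Proposition \ref{prop:unaryQE} and Corollaries \ref{cor:sscor}, \ref{cor:monstH}, \ref{cor:Acor} is purely formal once the combinatorics is in place. (It is worth remarking that the later Section \ref{sec:index} and Theorem \ref{thm:index} carry out exactly this kind of refined analysis of $A^{\indd}$ for general recurrence sequences, so one could alternatively cite that machinery; for the present Fact the direct argument sketched above is shorter.)
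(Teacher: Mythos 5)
Your overall route is the right one and is essentially the paper's: the paper does not prove this Fact directly (it is cited from \cite{PaSk}), but it is subsumed by Theorem \ref{thm:mainGS}, whose proof runs exactly as you outline --- sufficient sparsity via a $\log$-counting bound, then interpretation of $A^{\indd}_0$ in $\cN^1_{\mathfrak{s}}$, then Corollaries \ref{cor:sscor}, \ref{cor:monstH}, and \ref{cor:Acor}. The paper explicitly notes this is also the strategy of Palac\'{i}n and Sklinos.

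There is, however, a concrete flaw in your description of the combinatorial core, and it bites precisely on one of the two families the Fact covers. For $A=\Pi(q)$ with $t=1$, i.e. $A=\{q^n:n\in\N\}$, the ratio $a_{k+1}/a_k=q$ is \emph{constant}, so the inequality $a_{k+1}>k\cdot a_k$ you invoke fails for all large $k$, and the greatest-term argument does not force the top indices on the two sides to coincide. Indeed $q^n+\ldots+q^n=q^{n+1}$ ($q$ summands) is a solution of a homogeneous equation whose index tuple is $(n,\ldots,n,n+1)$: this set is \emph{not} a Boolean combination of diagonals $\{x_i=x_j\}$ and finite confinement conditions, contrary to your claim. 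What is true (and what the paper's Lemma \ref{lem:gsD} and Proposition \ref{prop:unarysucc} establish) is that the indices in any solution either differ by a uniformly bounded amount or the solution decomposes; within a bounded window the solution set is a finite union of \emph{shifted} diagonals $x_j=\mathfrak{s}^m(x_i)$ cut out by unary predicates on the base index. This is exactly why the target structure must be $\cN^1_{\mathfrak{s}}$ rather than $\dot{\N}$ for the powers (compare Fact \ref{fact:inddPaSk}$(a)$ versus $(b)$). Your conclusion still lands in $(\N,\mathfrak{s})$, so the endpoint is salvageable, but the lemma as you state it (``multiset of indices on the left coincides with the multiset on the right'') is false for $t=1$ and needs to be replaced by the bounded-gap decomposition. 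For the factorials and for $t\geq 2$ your description is accurate, since there $a_{k+1}/a_k\to\infty$ and Theorem \ref{thm:index}$(a)$ gives interdefinability of $A^{\indd}_0$ with $\dot{\N}$.
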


\begin{definition}\label{def:gs}
$~$
\begin{enumerate}
\item A set $S\seq\R^+$ is \textbf{geometric} if $\{\frac{s}{t}:s,t\in S,~t\leq s\}$ is closed and discrete.

\item A set $A\seq\Z$ is \textbf{geometrically sparse} if there is a function $f\colon A\to\R^+$ such that $f(A)$ is geometric and $\sup_{a\in A}|a-f(a)|<\infty$.
\end{enumerate}
\end{definition}

Note that, for any real number $b>1$, the set $\{b^n:n\in\N\}$ is geometric. Therefore, we immediately see that the set of powers $\Pi(q)$, where $q\geq 2$, is geometrically sparse. The rest of the examples in Fact \ref{fact:exPaSk} are geometrically sparse due to the following observation.

\begin{proposition}\label{prop:esgs}
Suppose $A\seq\N$ is infinite and monotonically enumerated $(a_n)_{n=0}^\infty$. If $\lim_{n\rightarrow\infty}\frac{a_{n+1}}{a_n}=\infty$ then $A$ is geometric, and thus geometrically sparse.
\end{proposition}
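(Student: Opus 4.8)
The plan is to show directly that if the ratios $\frac{a_{n+1}}{a_n}$ diverge to infinity, then the set $\{\frac{a_m}{a_n} : n \le m\}$ has no accumulation point in $[1,\infty)$; since it is obviously a subset of $[1,\infty)$, closedness will then follow automatically (a subset of $\R$ with no accumulation points in $[1,\infty)$ and bounded below by $1$ is closed and discrete). Taking $f = \mathrm{id}$ on $A$ gives $\sup_{a \in A}|a - f(a)| = 0 < \infty$, so geometric implies geometrically sparse trivially. Thus the whole content is the discreteness claim.

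For the discreteness claim, I would fix a real $L \ge 1$ and a bound $M > 0$, and show that only finitely many pairs $(n,m)$ with $n \le m$ satisfy $\frac{a_m}{a_n} \le M$; since $L$ was arbitrary this shows no point of $[1,\infty)$ is an accumulation point, and also (applied with $M$ slightly bigger than $L$) that each value is isolated. Concretely, choose $N$ large enough that $\frac{a_{k+1}}{a_k} > M$ for all $k \ge N$. I claim that if $\frac{a_m}{a_n} \le M$ with $n \le m$, then $n < N$: otherwise $n \ge N$, and if also $m > n$ we would have $\frac{a_m}{a_n} \ge \frac{a_{n+1}}{a_n} > M$, a contradiction, while $m = n$ gives ratio $1 \le M$ which is fine but still forces only finitely many pairs once we also bound $m$. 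To bound $m$: for fixed $n < N$, since $(a_k)$ is strictly increasing to $\infty$, there are only finitely many $m$ with $a_m \le M a_n$. Hence $\{(n,m) : n \le m,\ \frac{a_m}{a_n} \le M\}$ is finite, so $\{\frac{a_m}{a_n} : n \le m\} \cap [1,M]$ is finite. As $[1,\infty) = \bigcup_{k} [1,k]$, the set $\{\frac{a_m}{a_n} : n\le m\}$ is closed and discrete in $[1,\infty)$.

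There is essentially no obstacle here: the statement is a soft exercise once one unwinds the definitions. The only mild subtlety is making sure the interaction of "strictly increasing" with the divergence hypothesis is used correctly, namely that $\frac{a_{n+1}}{a_n} \to \infty$ forces not just the consecutive ratios but \emph{all} ratios $\frac{a_m}{a_n}$ with $m - n$ large (or $n$ large) to be large. Since $(a_k)$ is increasing and positive, $\frac{a_m}{a_n} \ge \frac{a_{n+1}}{a_n}$ whenever $m > n$, which handles this cleanly. I would also remark that the set $f(A) = A$ consists of positive integers (as $A \seq \N$ is infinite, it contains positive elements, and we may harmlessly discard a possible $0 \in A$ by adjusting $f$ on one point if desired, though this is not even necessary since the ratio set is defined only for $t \le s$ with $s,t \in f(A)$ and one can simply note the argument above never divides by $0$ once $n$ is large).

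\begin{proof}
Take $f\colon A\to\R^+$ to be the inclusion $a\mapsto a$, so that $\sup_{a\in A}|a-f(a)|=0<\infty$; it therefore suffices to show that $A$ is geometric, i.e. that
\[
S:=\left\{\tfrac{a_m}{a_n}:n\leq m\right\}\seq[1,\infty)
\]
is closed and discrete. Since $S\seq[1,\infty)$, it is enough to show that for every real $M\geq 1$ the set $S\cap[1,M]$ is finite; indeed, $[1,\infty)=\bigcup_{k\geq 1}[1,k]$, so this implies $S$ has no accumulation point in $[1,\infty)$, hence is closed and discrete in $[1,\infty)$.

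Fix $M\geq 1$. Since $\lim_{n\rightarrow\infty}\frac{a_{n+1}}{a_n}=\infty$, there is $N\geq 1$ such that $\frac{a_{k+1}}{a_k}>M$ for all $k\geq N$. Now suppose $n\leq m$ and $\frac{a_m}{a_n}\leq M$. If $m>n$ and $n\geq N$, then, since $(a_k)$ is strictly increasing with positive terms,
\[
\frac{a_m}{a_n}\geq\frac{a_{n+1}}{a_n}>M,
\]
a contradiction. Hence either $m=n$, or $n<N$. In the first case $\frac{a_m}{a_n}=1$. In the second case, there are at most $N$ choices for $n$, and for each such $n$ the condition $a_m\leq Ma_n$ restricts $m$ to finitely many values, because $a_m\to\infty$. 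Therefore the set of pairs $(n,m)$ with $n\leq m$ and $\frac{a_m}{a_n}\leq M$ is finite, and so $S\cap[1,M]$ is finite. This proves that $S$ is closed and discrete, so $A$ is geometric and hence geometrically sparse.
\end{proof}
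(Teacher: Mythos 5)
Your proof is correct and follows essentially the same approach as the paper: fix a bound $M$, use the divergence of consecutive ratios to choose $N$, and conclude that only finitely many ratios lie in $[1,M]$. The only cosmetic difference is that the paper bounds the larger index directly (from $a_n\leq ba_m\leq ba_{n-1}$ one gets $n<N$), while you bound the smaller index first and then the larger one for each fixed denominator; both yield the same finiteness.
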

\begin{proof}
We may assume $0\not\in A$. Let $Q=\{\frac{a_n}{a_m}:m\leq n\}$. We want to show $Q$ is closed and discrete. In particular, we fix $b>1$ and show that the set $Q_0:=\{q\in Q:q\leq b\}$ is finite. By assumption on $A$, we may find some $N\geq 0$ such that if $n\geq N$ then $ba_{n-1}<a_n$. If $m<n$ and $\frac{a_n}{a_m}\leq b$, then $a_n\leq ba_m\leq ba_{n-1}$, and so $n<N$. Therefore $Q_0\seq\{\frac{a_n}{a_m}:m\leq n<N\}$. 
\end{proof}

The specific examples from Fact \ref{fact:exPaSk} can also be generalized as follows.

\begin{example}
Suppose $A\seq\N$ is monotonically enumerated $(a_n)_{n=0}^\infty$. If $\frac{a_{n+1}}{a_n}\in\N$ for all $n\in\N$, then $A$ is geometrically sparse.
\end{example}

The rest of this section is devoted to giving further examples of geometrically sparse sets. The first is somewhat ad hoc, but will be referenced again later.

\begin{example}\label{ex:stratio}
Fix real numbers $c>0$ and $b>1$. Given $n\in\N$, let $a_n$ be the integer part of $cb^n$. Then $A=\{a_n:n\in\N\}$ is geometrically sparse.

As a specific example, the set $\Fib$ of Fibonacci numbers is geometrically sparse (take $b=\frac{1+\sqrt{5}}{2}$ and $c=\frac{1}{\sqrt{5}}$). 
\end{example}

The next example shows that the class of geometrically sparse sets is invariant under ``finitary perturbations".

\begin{example}
Suppose $A\seq\Z$ is geometrically sparse and fix a finite set $F\seq\Z$. Then it is straightforward to show that any subset of $A+F$ is geometrically sparse. 
\end{example}

Using linear recurrence sequences, we can define a large family of geometrically sparse sets, which encompasses many classical and famous examples of integer sequences. We first define a special set of real algebraic numbers.

\begin{definition}
Let $U$ be the set of real algebraic numbers $\lambda>1$ such that, if $f(x)$ is the minimal polynomial of $\lambda$ over $\Q$, then $f(x)$ has integer coefficients and, if $\mu\in\C$ is a root of $f(x)$ distinct from $\lambda$, then $|\mu|\leq 1$.
\end{definition}

\begin{remark}
The set $U$ is commonly partitioned into the sets of \emph{Pisot numbers} and \emph{Salem numbers}, which are a well-studied topic in number theory. Specifically, given $\lambda\in U$, with minimal polynomial $f(x)$, $\lambda$ is a Pisot number if $|\mu|<1$ for all roots $\mu\neq\lambda$ of $f(x)$, and $\lambda$ is a Salem number if $|\mu|=1$ for at least one root $\mu\neq\lambda$ of $f(x)$. If $\lambda\in U$ then the sequence of fractional parts of the powers $\lambda^n$ exhibits interesting behavior in its distribution over the interval $[0,1]$. See \cite{PSbook} for details. 
\end{remark}

\begin{example}\label{ex:LHRR}
Fix $\lambda\in U$, and let $f(x)=x^{d+1}-c_dx^d-\ldots-c_1x-c_0$ be the minimal polynomial of $\lambda$ over $\Q$, where $d\geq 0$ and $c_0,\ldots,c_d\in\Z$.  Fix a tuple $\abar=(a_0,\ldots,a_d)\in\Z^{d+1}$ and recursively define, for $n\geq d$, the integer
\[
a_{n+1}=c_da_n+c_{d-1}a_{n-1}+\ldots+c_0a_{n-d}.
\]
Define the set $R(\abar;\cbar):=\{a_n:n\in\N\}$. We call $\lambda$ the \emph{Pisot-Salem number} of $R(\abar;\cbar)$ and we call $f(x)$ the \emph{characteristic polynomial} of $R(\abar;\cbar)$. Many well-known examples of recurrence relations can be realized in this way, including the \emph{Fibonacci numbers} $R(0,1;1,1)$, the \emph{Lucas numbers} $R(2,1;1,1)$, the \emph{Pell numbers} $R(0,1;1,2)$, the \emph{Pell-Lucas numbers} $R(2,2;1,2)$, the \emph{order $n$ Fibonacci numbers} $R(0,\mathinner{\overset{n-1}{\ldots}},0,1;1,\mathinner{\overset{n}{\ldots}},1)$ (see \cite{MilesGF}), the \emph{Padovan numbers} $R(1,1,1;1,1,0)$, and the \emph{Perrin numbers} $R(3,0,2;1,1,0)$. In the last two examples, the Pisot number is the largest real root of $x^3-x-1$, which is the smallest Pisot number (also called the \emph{plastic number}, with approximate value $1.32471$). The smallest \emph{known} Salem number (conjectured to be the smallest) is the largest real root of \emph{Lehmer's polynomial} $x^{10}+x^9-x^7-x^6-x^5-x^4-x^3+x+1$, with approximate value $1.17628$.

Now let $A=R(\abar;\cbar)$ be as above, with Pisot-Salem number $\lambda$ and characteristic polynomial $f(x)$. Assume $A$ is infinite. Let $\mu_1,\ldots,\mu_d\in\C$ be the pairwise distinct roots of $f(x)$ other than $\lambda$. By construction, there are $\alpha,\beta_1,\ldots,\beta_d\in\C$ such that, for all $n\in\N$, 
\[
a_n=\alpha\lambda^n+\beta_1\mu_1^n+\ldots+\beta_d\mu_d^n.
\]
Since $\lambda>1$ is real and $|\mu_i|\leq 1$ for all $1\leq i\leq d$, it follows that $\alpha$ is real (and nonzero since $A$ is infinite). Note  that $(a_n)_{n\in\N}$ is eventually strictly monotonic and so the geometric set $\{|\alpha|\lambda^n:n\in\N\}$ witnesses that either $A$ or $\nv A$ is geometrically sparse. Thus Theorem \ref{thm:mainGS}  will show that $\Th(\cZ_A)$ is superstable of $U$-rank $\omega$.
\end{example}

\begin{remark}
In the previous example, the expression of $a_n$ as a linear combination of integers powers of roots of the minimal polynomial of a number in $U$ is crucial to conclude that $A$ (or $\nv A$) is geometrically sparse. For example the set $A\seq\N$ enumerated by $a_n=3^n+2^n$ is not geometrically sparse. We would hypothesize that $\Th(\cZ_A)$ is stable for this and similar choices of $A$ (e.g., general linear homogeneous recurrence sequences with constant coefficients and distinct roots); but the methods here do not apply. 
\end{remark}

\section{Geometrically sparse sets are stable}\label{sec:GSstab}

The goal of this section is to prove Theorem \ref{Athm:GS}, which we now recall.

\begin{theorem}\label{thm:mainGS}
If $A\seq\Z$ is geometrically sparse and infinite then $\Th(\cZ_A)$ is superstable of $U$-rank $\omega$. 
\end{theorem}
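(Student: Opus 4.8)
The plan is to combine the machinery built up in Sections~\ref{sec:Ugen}--\ref{sec:induced} with a careful analysis of the induced structure on a geometrically sparse set. By Corollary~\ref{cor:Acor}, it suffices to prove two things about an infinite geometrically sparse $A\seq\Z$: first, that $A$ is sufficiently sparse (equivalently, by Proposition~\ref{prop:sparse}, that no $\Sigma_n(\pm A)$ contains a nontrivial subgroup of $\Z$); and second, that $\Th(A^{\indd})$ is stable. The $U$-rank statement then follows from the bound $\omega\leq U(\cZ_A)\leq U(A^{\indd})\cdot\omega$ in Corollary~\ref{cor:Acor}, once we know $U(A^{\indd})$ is finite --- in fact I expect to show $U(A^{\indd})=1$, so that $U(\cZ_A)=\omega$.

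For sufficient sparsity, the first observation is that geometric sparsity is preserved under the relevant perturbations: if $A$ is geometrically sparse via $f\colon A\to\R^+$ with $\sup_a|a-f(a)|=N<\infty$, then for any fixed $n$ the sumset $\Sigma_n(\pm A)$ is, up to a bounded additive error, contained in a finite union of sets of the form $\pm\{f(a_1)\pm\cdots\pm f(a_k):k\le n\}$. The key point is to rule out that such a sumset contains all of $m\Z$ for some $m\ge 1$. Here I would adapt the unpublished argument of Poonen referenced in the introduction: suppose for contradiction that $m\Z\seq\Sigma_n(\pm A)$ for some $n$. Since $f(A)$ is geometric, the ratios $s/t$ for $s,t\in f(A)$, $t\le s$, are closed and discrete in $[1,\infty)$; combined with the fact that $\sup_a|a-f(a)|<\infty$ (so the $a_i$ themselves grow at a controlled multiplicative rate), one shows that the elements of $f(A)$ that are relevant to representing a large integer $km$ must be confined to a bounded window at the top of the sequence. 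A counting/growth argument --- each new element of the sequence is larger than some fixed multiple of the previous one once the ratios exceed any given bound, which discreteness forces --- then shows that the number of integers in $[1,X]\cap m\Z$ representable as a bounded $\pm$-sum of such elements is $o(X)$, contradicting $m\Z\seq\Sigma_n(\pm A)$. (Equivalently, one directly verifies $\udelta(\Sigma_n(\pm A)\cap\N)=0$, which is the definition of sufficient sparsity.)

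For stability of $\Th(A^{\indd})$, I would use Corollary~\ref{cor:monstH}: it suffices to exhibit an interpretable reduct $\cN$ of $A^{\indd}_0$ such that $\cN^1$ is stable, and by Proposition~\ref{prop:unaryQE} the natural candidate for $\cN$ is $(\N,s)$ with $s$ the successor function (or a disjoint union of finitely many such, encoding congruence classes), since there the expansion $\cN^1$ by all unary predicates is superstable of $U$-rank $1$. Concretely, identifying $A$ with $\N$ via the monotonic enumeration $a\mapsto n$ where $a=a_n$, one must show that the solution set in $A$ of any homogeneous linear equation $x_1+\cdots+x_k=y_1+\cdots+y_l$ pulls back to a set definable in $(\N,s)^1$ --- i.e., a finite Boolean combination of shifts of arbitrary unary predicates and of "successor" relations $n\mapsto n+j$. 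The geometric sparsity hypothesis is exactly what makes this work: if $a_{i_1}+\cdots+a_{i_k}=a_{j_1}+\cdots+a_{j_l}$ with the indices sorted, then the largest index on each side must agree up to a bounded amount (otherwise the dominant term $f$-value on one side, being a discrete ratio above all the others, cannot be matched), and by induction peeling off top terms one sees that all indices involved lie in finitely many bounded-width "clusters" whose relative offsets are bounded; such a condition is definable using finitely many successor-shifts together with a unary predicate recording which clusters actually occur. Thus $f(\text{equation's solution set})$ is definable in $(\N,s)^1$, giving the interpretable reduct and hence stability of $A^{\indd}_0$, and so of $A^{\indd}$, of $U$-rank at most $1$.

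The main obstacle I anticipate is the second half of the third step: making precise and correct the claim that, for a geometrically sparse set, the index sets arising in linear relations among the $a_n$ are controlled by boundedly-many bounded-width clusters with bounded offsets, uniformly in the equation. This requires squeezing real information about the discreteness of $\{s/t:s,t\in f(A)\}$ down to combinatorial information about the integer indices, and handling the interaction between (i) the bounded error $|a-f(a)|$, (ii) possible coincidences and near-cancellations among many terms of comparable size, and (iii) the need for uniformity over all $k,l$ so that the resulting definition in $(\N,s)^1$ does not depend on the equation in an unbounded way. The sumset argument in the first step is also delicate but is essentially a density estimate; the interpretation argument is where the structural heart of the theorem lies, and I would expect to isolate it as a separate lemma (analogous to Corollary~\ref{cor:Hex} referenced in the introduction) before assembling the pieces.
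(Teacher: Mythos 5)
Your overall architecture coincides with the paper's: reduce via Corollary \ref{cor:Acor} to showing (i) that $A$ is sufficiently sparse and (ii) that $A^{\indd}$ is superstable of $U$-rank $1$, with (i) handled by Poonen's discreteness lemma plus a counting estimate, and (ii) handled by interpreting $A^{\indd}_0$ in $\cN^1_{\mathfrak{s}}$ via Corollary \ref{cor:monstH} and Proposition \ref{prop:unaryQE}. Your sketch of (i) is essentially the paper's Proposition \ref{prop:gsss}: every nonzero element of $\Sigma_n(\pm A)$ is, up to a bounded error, a sum of at most $n$ elements of $\pm A$ each of magnitude $O(|a|)$, and since $A(N)=O(\log N)$ (Proposition \ref{prop:GSexpl}) the counting function of $\Sigma_n(\pm A)\cap\N$ is $O((\log N)^n)$, so no nontrivial subgroup is contained.

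The gap is in step (ii), exactly where you anticipate trouble. Your structural claim --- that in any solution of $a_{m_1}+\dots+a_{m_k}=a_{n_1}+\dots+a_{n_l}$ the indices lie in finitely many bounded-width clusters \emph{whose relative offsets are bounded} --- is false. Take $A=\{2^n:n\in\N\}$: the solution set of $x_1+x_2=y_1+y_2$ in $A$ consists exactly of the permutations $\{m_1,m_2\}=\{n_1,n_2\}$, so the clusters $\{m_1\}$ and $\{m_2\}$ occur at arbitrarily large relative offset. If inter-cluster offsets were bounded, the whole solution would have bounded index-spread and Proposition \ref{prop:unarysucc} alone would finish the proof; the entire difficulty is that they are not. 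The correct statement (Lemma \ref{lem:gsD}) is a dichotomy: either the index-spread is at most some $t=t(k,l,r)$, or the equation \emph{decomposes} into two sub-equations in fewer variables over complementary subsets of the index positions, at the cost of a constant term bounded by some $s=s(k,l,r)$. Definability in $\cN^1_{\mathfrak{s}}$ then follows by a double induction on $\max\{k,l\}$ and $\min\{k,l\}$ over equations carrying constant terms, taking a finite union over the possible decomposition data and conjoining the inductively definable solution sets of the two independent pieces --- not from a single unary predicate recording cluster positions. Proving the dichotomy is the technical heart: one must control near-cancellation among the top-order terms (this is the source of the bounded constant $s'$), and the argument invokes Poonen's lemma a second time to show that the sum of the normalized $\lambda$-ratios over the top block is either exactly zero on some sub-blocks (yielding the decomposition) or bounded below by a fixed $\epsilon>0$, hence too large to be cancelled by the lower-order terms. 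None of this appears in your sketch, so the central lemma remains both misstated and unproved.
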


We will prove this result in several steps, the first of which is to give a more precise description of infinite geometrically sparse sets. To avoid certain inconsequential annoyances, we will restrict to geometrically sparse subsets of $\Z^+$. This is sufficient to prove Theorem \ref{thm:mainGS} since if $A\seq\Z$ is geometrically sparse then $A\cap\Z^+$ is geometrically sparse, and $A$ is definable in $\cZ_{A\cap\Z^+}$ since $A$ is bounded below. Recall that given $A\seq\Z^+$ and $n\geq 1$, $A(n)$ denotes $|A\cap[n]|$.

\begin{proposition}\label{prop:GSexpl}
Suppose $A\seq\Z^+$ is geometrically sparse and monotonically enumerated $(a_n)_{n=0}^\infty$. Then there are real numbers $c>1$ and $\Theta\geq 0$, a real sequence $(\lambda_m)_{m=0}^\infty$, and  a weakly increasing surjective function $f\colon \N\to \N$ such that:
\begin{enumerate}[$(i)$]
\item $\{\lambda_m:m\in\N\}\seq\R^+$ is geometric,
\item $\lambda_{m+1}\geq c\lambda_m$ for all $m\in\N$, and
\item $|a_n-\lambda_{f(n)}|\leq\Theta$ for all $n\in\N$.
\end{enumerate}
In particular, $A(n)$ is $O(\log n)$. 
\end{proposition}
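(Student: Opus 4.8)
The plan is to start from the witnessing function $g\colon A\to\R^+$ guaranteed by the definition of geometric sparsity (so $g(A)$ is geometric and $\sup_{a\in A}|a-g(a)| =: \Theta_0 < \infty$), and to repackage the data into the stated form. First I would re-index: since $A$ is monotonically enumerated as $(a_n)_{n=0}^\infty$, consider the sequence $(g(a_n))_{n=0}^\infty$ of positive reals. This sequence need not be monotone, but since $|a_n - g(a_n)|\le\Theta_0$ and $a_n\to\infty$, it is eventually increasing and tends to infinity; after discarding finitely many initial terms (which only changes $\Theta$ by a bounded amount and can be absorbed) I may assume $g(a_n)$ is weakly increasing. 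Let $\{\lambda_m : m\in\N\}$ be the set $g(A)$ enumerated in strictly increasing order, so each $g(a_n)=\lambda_{f(n)}$ for a unique weakly increasing surjection $f\colon\N\to\N$; this gives property $(iii)$ with $\Theta=\Theta_0$ and the fact that $\{\lambda_m\}$ is geometric is just property $(i)$, inherited from $g(A)$.

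The heart of the argument is property $(ii)$: producing a uniform ratio lower bound $\lambda_{m+1}\ge c\lambda_m$. Here I would use that the set $R=\{\tfrac{s}{t} : s,t\in g(A),\ t\le s\}$ is closed and discrete in $[1,\infty)$. The key point is that $R$ has a smallest element strictly greater than $1$: indeed $1\in R$, and by discreteness $1$ is isolated, so there is $c>1$ with $R\cap(1,c)=\emptyset$. Since consecutive elements $\lambda_m<\lambda_{m+1}$ of the increasing enumeration of $g(A)$ satisfy $\tfrac{\lambda_{m+1}}{\lambda_m}\in R$ and $\tfrac{\lambda_{m+1}}{\lambda_m}>1$, we must have $\tfrac{\lambda_{m+1}}{\lambda_m}\ge c$ for all $m$. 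This also forces $c>1$ as required (and in particular $\lambda_m\to\infty$ geometrically, consistent with $a_n\to\infty$). One subtlety to check: $g(A)$ could in principle be finite, but then $A$ would be finite since $|a_n-g(a_n)|$ is bounded, contradicting infiniteness of $A$; so $(\lambda_m)$ is a genuine infinite sequence and $f$ is a genuine surjection onto $\N$.

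For the final clause, that $A(n)$ is $O(\log n)$: from $(ii)$ we get $\lambda_m\ge c^m\lambda_0$, and from $(iii)$ we get $a_n\ge\lambda_{f(n)}-\Theta\ge c^{f(n)}\lambda_0-\Theta$. Since $f$ is weakly increasing and surjective, the number of indices $n$ with $a_n\le N$ is at most $|f^{-1}(\{0,\dots,M\})|$ for $M$ roughly $\log_c N$ times the number of $n$ mapping to each fixed value; but each fiber $f^{-1}(m)$ has size at most $|A\cap[\lambda_m+\Theta]|$ restricted to the window where $|a_n-\lambda_m|\le\Theta$, i.e., at most $2\Theta+1$ many integers $a_n$ can lie within $\Theta$ of a fixed $\lambda_m$. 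Hence $A(N)\le (2\Theta+1)\cdot|\{m : \lambda_m - \Theta\le N\}| = O(\log N)$, using the geometric growth of $\lambda_m$. I expect the main obstacle to be the bookkeeping around the finitely many initial terms and the non-injectivity of $f$ — making sure the bounded perturbation $\Theta$ genuinely controls fiber sizes — but none of this is deep; the real content is the isolation of $1$ in the discrete set $R$, which immediately yields the uniform ratio bound $(ii)$.
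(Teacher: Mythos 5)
Your proposal is correct and rests on the same key observation as the paper's proof: since the ratio set $R=\{\frac{s}{t}:s,t\in g(A),\,t\leq s\}$ is closed and discrete and contains $1$, the point $1$ is isolated, which yields a uniform bound $c>1$ on ratios of distinct elements of $g(A)$; property $(iii)$ and the $O(\log n)$ bound then follow from the bounded perturbation exactly as you say. The only difference from the paper is cosmetic: the paper first extracts a subsequence $(b_m)$ of $A$ with consecutive gaps at least $2\Theta_0$, sets $\lambda_m=g(b_m)$ (which forces $(\lambda_m)$ to be strictly increasing by construction), and defines $f$ via the resulting partition of $A$ into blocks of diameter at most $2\Theta_0$; you instead take $(\lambda_m)$ to be the increasing enumeration of $g(A)$ itself.

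One step in your write-up is under-justified: you assert that $(g(a_n))_{n}$ is eventually weakly increasing ``since $|a_n-g(a_n)|\leq\Theta_0$ and $a_n\to\infty$.'' Those two facts alone do not imply eventual monotonicity (a bounded perturbation of an increasing integer sequence can oscillate forever). The correct justification is the one you already have on the table: if $g(a_n)>g(a_{n+1})$, then these are distinct elements of $g(A)$, so $g(a_n)\geq c\, g(a_{n+1})$, while the bounded perturbation gives $g(a_n)<g(a_{n+1})+2\Theta_0$; together these force $g(a_{n+1})<\frac{2\Theta_0}{c-1}$, which can happen for only finitely many $n$. With that repair (and the re-indexing of the discarded initial $\lambda_m$'s so that $f$ remains surjective), the argument goes through.
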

\begin{proof}
Let $g\colon A\to\R^+$ witness that $A$ is geometrically sparse, and set $r=\sup_{a\in A}|a-g(a)|$. Define a sequence $(b_m)_{m=0}^\infty$ such that $b_0=a_0$ and $b_{m+1}=\min \{a\in A:b_m+2r\leq a\}$.  For any $m\in\N$, we have $g(b_m)\leq b_m+r<b_{m+1}-r\leq g(b_{m+1})$. For $m\in\N$, set $\lambda_m=g(b_m)$. Let $S=\{\lambda_m:m\in\N\}$ and $C=\{b_m:m\in\N\}$. Then $C\seq A$ and $S\seq g(A)$. In particular, $S$ is geometric and $g\colon C\to S$ is a strictly increasing bijection. Since $S$ is geometric and $(\lambda_m)_{m=0}^\infty$ is strictly increasing, we may fix some $c>1$ such that $\frac{\lambda_{m+1}}{\lambda_m}\geq c$ for all $m\in\N$. 

Given $m\in\N$, let $A_m=\{a\in A:b_m\leq a<b_{m+1}\}$. Then $(A_m)_{m=0}^\infty$ is a partition of $A$ into nonempty $A$-convex sets with $\min A_m=b_m$, and $\max A_m-\min A_m\leq 2r$. Thus we obtain a well-defined weakly increasing surjective function $f\colon \N\to\N$ defined so that $a_n\in A_{f(n)}$. For $m,n\in\N$, we have $|a_n-b_{f(n)}|\leq 2r$ and $|b_m-\lambda_m|\leq r$ and so, setting $\Theta=3r$, properties $(i)$ through $(iii)$ are satisfied. It also follows that $\liminf_{m\to\infty}\frac{b_{m+1}}{b_m}\geq c$ and so, since $A=\bigcup_{m\in\N}A_m$, $\min A_m=b_m$, and $|A_m|$ is uniformly bounded, we have that $A(n)$ is $O(\log n)$. 
\end{proof}

The proof of Theorem \ref{thm:mainGS} proceeds according to the strategy discussed above (which is the same strategy employed by Palac\'{i}n and Sklinos \cite{PaSk} in Fact \ref{fact:exPaSk}). We first show that if $A\seq\N$ is geometrically sparse then $A$ is sufficiently sparse, and so stability of $\Th(\cZ_A)$ reduces to stability of $\Th(A^{\indd})$. We will then show that $A^{\indd}$ is superstable of $U$-rank $1$.

\subsection{Geometrically sparse sets are sufficiently sparse}\label{sec:poon}

The next two results are a mild deconstruction of an unpublished argument of Poonen \cite{PooMO}.\footnote{Specifically, the proof modifies (with Poonen's permission) the answer to a MathOverflow question (see \cite{PooMO}), which was asked in 2010 (not by the author).}

\begin{lemma}\label{lem:poon}
Suppose $X\seq \R^{\geq1}$ is closed and discrete, and let $Q=\{x\inv:x\in X\}$. Given $k\geq 1$, define
\[
Q_k=\left\{q_1+\ldots+q_k:q_i\in \pm Q,~|q_1|=1,~\sum_{i\in I}q_i\neq 0\text{ for all nonempty $I\seq [k]$}\right\}.
\]
Then, for all $k\geq 1$, there is some $\epsilon_k>0$ such that $|q|>\epsilon_k$ for all $q\in Q_k$.
\end{lemma}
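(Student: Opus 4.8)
The plan is to induct on $k$. For $k=1$ the set $Q_1$ consists only of $\pm 1$ (since $|q_1|=1$ is forced and the nonvanishing condition is automatic), so $\epsilon_1 = \tfrac12$ works. For the inductive step, suppose the claim holds for all values below $k$, and fix $q = q_1 + \ldots + q_k \in Q_k$. I would first dispose of the case where $q$ is ``large'': since $|q_1| = 1$, if $|q_2| + \ldots + |q_k| \leq \tfrac12$ then $|q| \geq \tfrac12$, so we may assume some $q_i$ with $i \geq 2$ has $|q_i| > \tfrac{1}{2(k-1)}$. Writing $q_i = \pm x_i\inv$ with $x_i \in X$, this means $x_i < 2(k-1)$, so $x_i$ lies in the finite set $X \cap [1, 2(k-1))$ (finite because $X$ is closed and discrete). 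Thus there are only finitely many possible values for $|q_i|$, say lying in a finite set $V_k \subseteq \R^{>0}$ depending only on $X$ and $k$.

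The key maneuver is a \emph{scaling/rearrangement} trick to reduce to smaller $k$. Group the summands $q_1, \ldots, q_k$ into maximal blocks according to which $x$-value appears, or — more directly — pick out the summand $q_i$ identified above with $|q_i| \in V_k$, and consider $q' := q - q_i = \sum_{j \neq i} q_j$, a sum of $k-1$ terms from $\pm Q$. If among the remaining terms there is one with $|q_j| = 1$ and the nonvanishing-over-subsets condition still holds for $\{q_j : j \neq i\}$, then $q' \in Q_{k-1}$ and $|q'| > \epsilon_{k-1}$ by induction; combined with $|q_i| \in V_k$ one then argues that $q = q' \pm |q_i|$ cannot be too close to $0$. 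The subtlety is that the remaining terms need not include one of absolute value $1$, and that some subset of them \emph{could} sum to zero even though no subset of all $k$ terms does. To handle this cleanly, I would instead normalize: let $m = \max_j |q_j| = \max_j x_j\inv$ (the reciprocal of the smallest $x_j$), rescale all terms by $1/m$ so the largest has absolute value $1$, observe the rescaled sum still has all partial sums over nonempty subsets nonzero, apply the induction hypothesis to the \emph{full} $k$-term rescaled sum via a secondary induction — or, better, induct on the number of \emph{distinct} values among the $x_j$'s rather than on $k$ itself.

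Let me restructure accordingly. Induct on the number $\ell$ of distinct values of $x \in X$ occurring among $x_1, \ldots, x_k$ (so $\ell \leq k$). If $\ell = 1$, all $q_i = \pm 1$ (forced by $|q_1| = 1$), and $q$ is a nonzero integer, so $|q| \geq 1$. For $\ell \geq 2$: let $x^* = \min\{x_1, \ldots, x_k\}$, with reciprocal $t = (x^*)\inv$ the largest absolute value among the $q_i$. Separate the sum as $q = tN + q''$ where $N \in \Z$ is the signed count of terms equal to $\pm t$ (so $|N| \leq k$) and $q''$ is the sum of the terms of strictly smaller absolute value. Now $q''$ is a sum of at most $k$ terms from $\pm Q$ involving only $\ell - 1$ distinct $x$-values, each of absolute value at most $(x^{**})\inv$ where $x^{**}$ is the second-smallest value in $X \cap [1, 2(k-1))$ — hence $|q''| < k \cdot (x^{**})\inv =: M_k$, a bound depending only on $X$ and $k$. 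If $N = 0$, then $q = q''$ and the nonvanishing hypothesis applied to the smaller-value terms, after rescaling the largest of them to $1$, lets the inner induction give $|q| = |q''| > \epsilon'$ for some constant; if $N \neq 0$, then $|tN| \geq t = (x^*)\inv$ but also $t \leq 1$... this still doesn't immediately separate $q$ from $0$ when $t$ is small.

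I therefore expect the genuine content, and the \textbf{main obstacle}, to be exactly this interaction: showing that a large term $tN$ cannot be nearly cancelled by the accumulated smaller terms $q''$ when $t$ itself is tiny. The resolution should be that one does \emph{not} peel off the largest term but the \emph{smallest}: let $x^* = \max\{x_1,\ldots,x_k\}$ (smallest absolute value $t = (x^*)\inv$), write $q = q''' + tN$ with $q'''$ the sum of terms of absolute value $> t$ and $N$ the signed count of $\pm t$ terms. Rescale by $x^*$: then $x^* q = x^* q''' + N$, where $x^* q'''$ is a sum of $\leq k$ terms of the form $\pm x^*/x_j$ with $x_j < x^*$, i.e. $\pm (x^*\inv x_j)\inv$... this does not land back in $\pm Q$ in general. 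So the cleanest route is: prove the stronger statement that for \emph{any} $k$ and any finite $W \subseteq \R^{>0}$, sums $\sum_{i=1}^k w_i$ with $w_i \in \pm W$, $\max|w_i| \leq 1$, $\min|w_i|$ bounded below by the least element of $W$, and all nonempty partial sums nonzero, are bounded away from $0$ — and run the induction on $|W|$, clearing denominators to reduce to the case $W \subseteq \Z$ where a nonzero partial sum is an integer, hence $\geq 1$. The quantitative bound $\epsilon_k$ then emerges by tracking, through the (at most $k$-fold) denominator-clearing, the common multiple of the finitely many eligible $x$-values $X \cap [1, 2(k-1))$. I would present this cleaned-up induction as the actual proof; the bookkeeping of constants is routine once the ``clear denominators, reduce to integers'' idea is in place.
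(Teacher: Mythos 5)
Your proposal does not close the gap that you yourself identify as ``the main obstacle,'' namely that a term of non-negligible size could be nearly cancelled by an accumulation of many small terms. The final plan --- reduce to sums with values in a finite set $W\seq\R^{>0}$ and then clear denominators --- does not apply to the lemma as stated, because only \emph{one} summand (beyond $q_1$) is forced into the finite set $\{x\inv:x\in X\cap[1,2(k-1))\}$ by your opening reduction; the remaining $q_j$ may take values $x_j\inv$ with $x_j\in X$ arbitrary, and $X$ (being merely closed and discrete in $\R^{\geq 1}$, e.g.\ $X=\Z^{\geq 1}$) is typically infinite. So there is no finite $W$ containing all eligible summands, and uniformity over all finite subsets of $Q$ is exactly the content of the lemma. (The denominator-clearing step is also unavailable when $X$ contains irrationals, though for a genuinely finite $W$ one would not need it: finitely many nonzero sums are automatically bounded away from $0$.)

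The paper resolves the obstacle by a compactness argument that your proposal does not reach: suppose for contradiction that there is a sequence $x_n=q_1^n+\ldots+q_k^n\in Q_k$ with $x_n\to 0$, ordered so that $1=|q_1^n|\geq\ldots\geq|q_k^n|$. If the smallest terms $q_k^n$ do not tend to $0$, then along a subsequence every $|q_i^n|$ lies in $Q\cap(\epsilon,1]$, which \emph{is} finite by discreteness of $X$ on the bounded set $[1,\epsilon\inv]$; then $\{x_n\}$ is a finite set of nonzero numbers, contradicting $x_n\to 0$. Hence $q_k^n\to 0$, and $y_n:=x_n-q_k^n$ is a sequence in $Q_{k-1}$ tending to $0$ (the condition $|q_1^n|=1$ and the subset-nonvanishing condition are inherited), contradicting the induction hypothesis. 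Note that one only ever deletes the \emph{smallest} term, so your worry that the remaining terms might lack one of absolute value $1$, or that a subset of them could vanish, was unfounded; the real difficulty is the cancellation issue sketched above, and your proposal leaves it open.
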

\begin{proof}
 We proceed by induction on $k$. Note that $Q_1=\{1,\nv 1\}$, and so the base case is trivial. Assume the result for $Q_{k-1}$. Suppose, toward a contradiction, that there is a sequence $(x_n)_{n=0}^\infty$ in $Q_k$ converging to $0$. Let $x_n=q^n_1+\ldots+q^n_k$, where $q^n_i\in\pm Q$ and $1=|q^n_1|\geq\ldots\geq |q^n_k|$. 

We claim that $(q^n_k)_{n=0}^\infty$ converges to $0$. If not then, after replacing $(x_n)_{n=0}^\infty$ with a subsequence, we may fix $\epsilon>0$ such that $|q^n_k|>\epsilon$ for all $n\in\N$.  Let $Q^*=\{q\in Q:\epsilon<q\leq 1\}$. Then $Q^*$ is finite since $X$ is closed and discrete. Moreover, we have $|q^n_i|\in Q^*$ for all $i\in[k]$ and $n\in\N$. Therefore $\{x_n:n\in\N\}$ is finite. But each $x_n$ is nonzero since $0\not\in Q_k$, which contradicts $\lim_{n\rightarrow\infty} x_n=0$.

Now set $y_n=q^n_1+\ldots+q^n_{k-1}$. Then $(y_n)_{n<\omega}$ is a sequence of elements of $Q_{k-1}$ converging to $0$, which contradicts the induction hypothesis. 
\end{proof}

\begin{proposition}\label{prop:gsss}
If $A\seq\Z^+$ is geometrically sparse then $A$ is sufficiently sparse.
\end{proposition}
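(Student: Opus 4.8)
The plan is to show that if $A\seq\Z^+$ is geometrically sparse, then for every $n\geq 1$ the sumset $\Sigma_n(\pm A)$ does not contain any nontrivial subgroup of $\Z$; by Proposition \ref{prop:sparse}, this is equivalent to $A$ being sufficiently sparse. So fix $n\geq 1$ and suppose toward a contradiction that $m\Z\seq\Sigma_n(\pm A)$ for some $m\geq 1$. This means that for infinitely many integers $N$ (indeed, all multiples of $m$), there is some $k\leq n$ and elements $a_1,\ldots,a_k\in\pm A$ with $N=a_1+\ldots+a_k$. The key idea, extracted from Poonen's argument and packaged into Lemma \ref{lem:poon}, is that after dividing through by the largest term, such a representation forces the reciprocals of the underlying geometric set to have a tiny nonzero linear combination, which is impossible.

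Here is how I would arrange the details. Let $g\colon A\to\R^+$ witness geometric sparsity with $r=\sup_{a\in A}|a-g(a)|<\infty$, and let $X=g(A)\seq\R^+$ — after discarding finitely many small elements (which only affects $\Sigma_n(\pm A)$ up to a bounded set, harmless for the subgroup question since a subgroup is unbounded) we may assume $X\seq\R^{\geq 1}$ is closed and discrete, with $Q=\{x^{-1}:x\in X\}$ as in Lemma \ref{lem:poon}. Suppose $m\Z\seq\Sigma_n(\pm A)$. Take a very large $N$ divisible by $m$; write $N=\epsilon_1 b_1+\ldots+\epsilon_k b_k$ with $k\leq n$, $\epsilon_i\in\{\pm1\}$, $b_i\in A$, and the sum ``irredundant'' in the sense that no nonempty subset of the $\epsilon_i g(b_i)$ sums to $0$ — this can be arranged by cancelling any vanishing subsum, which only decreases $k$. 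Let $b_1$ be the largest of the $b_i$. Since $|N-(\epsilon_1 g(b_1)+\ldots+\epsilon_k g(b_k))|\leq kr\leq nr$, dividing by $g(b_1)$ gives
\[
\left|\frac{N}{g(b_1)}-\left(\epsilon_1+\epsilon_2\frac{g(b_2)}{g(b_1)}+\ldots+\epsilon_k\frac{g(b_k)}{g(b_1)}\right)\right|\leq\frac{nr}{g(b_1)}.
\]
Here $q_i:=\epsilon_i g(b_i)^{-1}\cdot\text{(something)}$ — more precisely the quantity $\epsilon_1+\sum_{i\geq 2}\epsilon_i g(b_i)/g(b_1)$ is of the form $q_1+\ldots+q_k$ with $q_i\in\pm Q'$ for the rescaled geometric set $Q'=\{g(b)/g(b_1):b\in A,~g(b)\leq g(b_1)\}$, with $|q_1|=1$ and no vanishing subsum; so by Lemma \ref{lem:poon} its absolute value exceeds the uniform constant $\epsilon_k\geq\epsilon_n>0$. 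On the other hand $N/g(b_1)$ is a nonzero integer up to the small error, and as $N\to\infty$ along multiples of $m$, the corresponding largest term $g(b_1)\to\infty$ (since $A$ has only $O(\log N)$ elements below $N$, by Proposition \ref{prop:GSexpl}, the representation of a large $N$ must use a large element). Thus $nr/g(b_1)\to 0$, forcing $N/g(b_1)$ to be within $o(1)$ of a fixed finite set of ``allowed'' values $q_1+\ldots+q_k$ bounded below in absolute value by $\epsilon_n$ and bounded above by $n$; but $N/g(b_1)$ ranges over arbitrarily large values as $N\to\infty$ while $g(b_1)$ stays comparable to $N$ only if... — this is the point needing care.

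The main obstacle, and where I would spend the most effort, is pinning down the relationship between $N$ and $g(b_1)$: a priori $g(b_1)$ could be much larger than $N$ (huge positive and negative terms cancelling), so $N/g(b_1)$ need not be large. The fix is exactly the irredundancy plus Lemma \ref{lem:poon}: irredundancy rules out the exact cancellations, and the lemma says the \emph{scaled} sum $q_1+\ldots+q_k$ can't be too close to $0$ either, hence is bounded below by $\epsilon_n$ in absolute value; combined with the upper bound $|q_1+\ldots+q_k|\leq k\leq n$, we get $\epsilon_n g(b_1)\leq |N|+nr\leq (1+o(1))g(b_1)\cdot n$, so $g(b_1)\asymp N$ — in particular $g(b_1)$ is large when $N$ is. Then $N/g(b_1)$ lies in $[-n-o(1),n+o(1)]$ but also in every interval of radius $nr/g(b_1)\to0$ around the finite set $\{q_1+\ldots+q_k\}$ of values; since there are only finitely many choices of $k\leq n$, signs $\epsilon_i$, and ratios $g(b_i)/g(b_1)$ from the locally finite geometric set lying below a fixed bound, the set of possible values $q_1+\ldots+q_k$ for the scaled sums (as $b_1$ ranges over all sufficiently large elements of $A$) is itself discrete and bounded, hence finite; so $N/g(b_1)$ takes only finitely many values up to $o(1)$ error, forcing $N$ itself to take only finitely many values up to bounded error — contradicting that $N$ ranges over all multiples of $m$. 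Therefore no such subgroup $m\Z$ exists, and $A$ is sufficiently sparse. I would double-check the ``finitely many scaled sums'' claim by noting that $g(b_i)/g(b_1)\in\{s/t:s,t\in g(A),~t\leq s\}$, which is the closed discrete set in the definition of geometric, so below any bound only finitely many values occur; this is the precise mechanism by which geometricity (not just sparsity of growth) is used.
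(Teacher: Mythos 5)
Your setup---irredundant representations, dividing by the largest term, and invoking Lemma \ref{lem:poon} to get a uniform lower bound $\epsilon_n$ on the scaled sum, hence $g(b_1)\asymp N$---is essentially the content of the Claim in the paper's proof of this proposition. But the step you yourself flag as needing care is where the argument breaks. The set of possible scaled sums $q_1+\ldots+q_k$ is \emph{not} finite: the ratios $g(b_i)/g(b_1)$ with $g(b_i)\leq g(b_1)$ are the \emph{reciprocals} of elements of the closed discrete set $\{s/t: s,t\in g(A),~t\leq s\}\subseteq[1,\infty)$, so they lie in $(0,1]$ and accumulate at $0$; your ``double-check'' has the ratio upside down, since ``closed and discrete in $[1,\infty)$'' gives finitely many values \emph{above} any positive threshold for the ratios $\geq 1$, not finitely many reciprocals below $1$. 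Already $\{1+q: q\in\pm Q,~q\neq \nv1\}$ is an infinite set of admissible scaled sums accumulating at $1$. Moreover, even granting finiteness of the set of values $v=q_1+\ldots+q_k$, your conclusion that $N$ then takes only finitely many values up to bounded error does not follow: $N\approx v\cdot g(b_1)$ with $g(b_1)$ ranging over an infinite set, so $N$ ranges over a union of finitely many scaled copies of $g(A)$, which is infinite.

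What is needed to finish---and what the paper does---is a counting argument rather than a finiteness argument. From the decomposition you established (every nonzero $a\in\Sigma_n(\pm A)$ is a sum of at most $n$ elements of $\pm A$, each of absolute value $O(|a|)$, plus a bounded correction), together with Proposition \ref{prop:GSexpl} (which gives $A(k)=O(\log k)$), one counts representations: there are $O((\log N)^n)$ choices of the tuple of summands and $O(1)$ choices of the correction, so $|\Sigma_n(\pm A)\cap[N]|=O((\log N)^n)=o(N)$. Hence $\Sigma_n(\pm A)$ cannot contain a nontrivial subgroup of $\Z$, and Proposition \ref{prop:sparse} finishes. You have the hard analytic input; replace the final ``finitely many values'' paragraph with this count of tuples (not of values of the scaled sum) and the proof closes.
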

\begin{proof}
Let $f\colon A\to\R^+$ witness that $A$ is geometrically sparse.  For $k\geq 1$, let $Q_k$ and $\epsilon_k$ be as in Lemma \ref{lem:poon} with $X=\{\frac{s}{t}:s,t\in f(A),~t\leq s\}$.

\noit{Claim}: Given $n\geq 1$ there are $c,d>0$ such that any nonzero $a\in\Sigma_n(\pm A)$ can be written as $a=a_1+\ldots+a_k+q$, where $|q|\leq d$, $k\leq n$ and $a_i\in \pm A$ with $|a_i|< c|a|$. 

\noit{Proof}:  Let $d_0=\sup_{a\in A}|a-f(a)|$ and $c_0=\max\{\epsilon_k^{\nv 1}:1\leq k\leq n\}$. Set $d=nd_0$ and $c=c_0+c_0d+d_0$. Given $a\in A$, let $f(\nv a)=\nv f(a)$. Fix $a\in\Sigma_n(\pm A)$. We may write $a=f(a_1)+\ldots+f(a_n)+r_1+\ldots+r_n$ where $a_i\in \pm A$ and $|r_i|\leq d_0$. Set $t_i=f(a_i)$. Up to re-indexing, we may write $a=t_1+\ldots+t_k+r_1+\ldots+r_n$, where $k\leq n$, $|t_1|\geq\ldots\geq|t_k|$, and $\sum_{\in I}t_i\neq 0$ for all nonempty $I\seq[k]$. Let $q_*=r_1+\ldots+r_n$ and $q_i=\frac{t_i}{|t_1|}$. Then $q_1+\ldots+q_k\in Q_k$, and so  
\[
\frac{|a-q_*|}{|t_1|}=|q_1+\ldots+q_k|>\frac{1}{c_0}.
\]
Then, for any $i\in[k]$, we have $|t_i|\leq |t_1|<c_0|a-q_*|$. Therefore, for $i\in[k]$, we have 
\[
|a_i|=|t_i+r_i|<c_0|a|+c_0|q_*|+d_0\leq c_0|a|+c_0d+d_0\leq c|a|.
\]
Finally, if $q=r_{k+1}+\ldots+r_n$, then $a=a_1+\ldots+a_k+q$ and $|q|\leq d$. \claim

We now prove the result. Fix $n\geq 1$ and let $B=\Sigma_n(\pm A)\cap\N$. By Proposition \ref{prop:GSexpl}, $A(k)$ is $O(\log k)$, and so $B(k)$ is $O((\log k)^n)$ by the claim. So $\Sigma_n(\pm A)$ does not contain a nontrivial subgroup of $\Z$. By Proposition \ref{prop:sparse}, $A$ is sufficiently sparse.
\end{proof}

\subsection{Induced structure on geometrically sparse sets}

\begin{definition}
Let $\cN_{\mathfrak{s}}$ denote $\N$ with the successor function ${\mathfrak{s}}(x)=x+1$.
\end{definition}

Note that $\cN^1_{\mathfrak{s}}$ is superstable of $U$-rank $1$ by Proposition \ref{prop:unaryQE}. The goal of this section is show that if $A\seq\N$ is geometrically sparse, then $A^{\indd}$ is an interpretable reduct of $\cN^1_{\mathfrak{s}}$. To do this, it suffices by Corollary \ref{cor:monstH} to show $A^{\indd}_0$ is an interpretable reduct of $\cN^1_{\mathfrak{s}}$. This is the main work left for the proof of Theorem \ref{thm:mainGS}. The general strategy involves considering solutions in $A$ to linear equations of the form $x_1+\ldots+x_k=y_1+\ldots+y_l$, and separating these solutions into two categories based on whether the solution ``decomposes" into two solutions of linear equations in fewer variables, possibly at the cost of introducing a constant term to the equation. The next definition makes this precise.

\begin{definition}\label{def:Akl}
Suppose $A\seq\N$ is an infinite set with monotonic enumeration $(a_n)_{n=0}^\infty$. Fix integers $k,l\geq 0$ and $r\in\Z$.
\begin{enumerate}
\item Let $A(k,l,r)$ be the set of tuples $(\mbar,\nbar)\in\N^k\times\N^l$ such that 
\[
r+a_{m_1}+\ldots+a_{m_k}=a_{n_1}+\ldots+a_{n_l}.
\]
\item Given $s\in\N$, let $A(k,l,r,s)$ be the set of tuples $(\mbar,\nbar)\in \N^k\times\N^l$ such that, for some $s'\in\Z$, with $|s'|\leq s$, and some $I\subsetneq[k]$ and $J\subsetneq [l]$, with $I,J$ not both empty,
\[
r+s'+\sum_{i\in I}a_{m_i}=\sum_{j\in J}a_{n_j}\mand \sum_{i\not\in I}a_{m_i}=s'+\sum_{j\not\in J}a_{n_j}.
\]
Note that $A(k,l,r,s)\seq A(k,l,r)$ for any $s\in\Z$.
\end{enumerate}
\end{definition}

Given an infinite set $A\seq\N$, integers $k,l\geq 0$, and $r\in\Z$, if we  write $(\mbar,\nbar)\in A(k,l,r)$ then it is understood that $\mbar$ is a $k$-tuple and $\nbar$ is an $l$-tuple. Given a finite tuple $\ubar$ of integers we write $\max\ubar$ and $\min\ubar$ for the maximum and minimum coordinate of $\ubar$, respectively.

\begin{proposition}\label{prop:unarysucc}
Suppose $A\seq\N$ is infinite. For any $k,l,r,t\in\Z$, with $k,l\geq 0$, if $X_A$ is the set of $(\mbar,\nbar)\in A(k,l,r)$ such that $\max(\mbar,\nbar)-\min(\mbar,\nbar)\leq t$, then $X_A$ is definable in $\cN^1_{\mathfrak{s}}$.
\end{proposition}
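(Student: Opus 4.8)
The plan is to reduce the condition "$(\mbar,\nbar)\in A(k,l,r)$ and $\max(\mbar,\nbar)-\min(\mbar,\nbar)\leq t$" to a Boolean combination of unary conditions on the coordinates together with finitely many conditions of the form $\mathfrak{s}^j(x)=y$, all of which are definable in $\cN^1_{\mathfrak{s}}$.

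\begin{proof}[Proof sketch]
Work with the monotonic enumeration $(a_n)_{n=0}^\infty$ of $A$. The key observation is that the constraint $\max(\mbar,\nbar)-\min(\mbar,\nbar)\leq t$ forces all the indices appearing in a tuple of $X_A$ to lie in a window of size at most $t$. So fix the least index $n_0=\min(\mbar,\nbar)$; then every coordinate of $(\mbar,\nbar)$ equals $n_0+j$ for some $j$ with $0\leq j\leq t$. For each way of assigning to the $k+l$ coordinates values $n_0+j_1,\ldots,n_0+j_{k+l}$ with each $j_p\in\{0,\ldots,t\}$ (finitely many choices), the equation $r+a_{m_1}+\ldots+a_{m_k}=a_{n_1}+\ldots+a_{n_l}$ becomes a single equation in the one unknown index $n_0$, namely
\[
r+\sum_{\text{left coords}} a_{n_0+j_p}=\sum_{\text{right coords}} a_{n_0+j_p}.
\]
I claim the set of $n_0$ satisfying this is a \emph{fixed} subset $W$ of $\N$ depending only on $k,l,r,t$ and the chosen assignment of offsets — and an arbitrary subset of $\N$ is precisely the kind of predicate available in $\cN^1_{\mathfrak{s}}$. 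Thus "$n_0\in W$" is definable in $\cN^1_{\mathfrak{s}}$ by a unary predicate. The remaining part of the assertion — that coordinate $p$ equals $n_0+j_p$ — is the condition $\mathfrak{s}^{j_p}(x_{\min})=x_p$, which is quantifier-free in the language with $\mathfrak{s}$, together with "$x_{\min}$ is the minimum", i.e. $\bigwedge_p \exists^{\leq j_p}$-type statements; more simply, having fixed which coordinate realizes the minimum and the offsets $j_p$, the whole configuration is pinned down by the conjunction of $\mathfrak{s}^{j_p}(x_{q})=x_p$ relations among the variables, which is clearly $\cN^1_{\mathfrak{s}}$-definable. Finally, $X_A$ is the finite disjunction over (choice of which coordinate is the minimum) $\times$ (choice of offsets $j_p\in\{0,\ldots,t\}$) of the conjunction "the offsets are as specified" $\wedge$ "$x_{\min}\in W_{(\text{choice})}$". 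Being a finite Boolean combination of $\cN^1_{\mathfrak{s}}(\N)$-definable sets, $X_A$ is definable in $\cN^1_{\mathfrak{s}}$.
\end{proof}

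The step I expect to require the most care is the bookkeeping in passing from "$\max(\mbar,\nbar)-\min(\mbar,\nbar)\leq t$" to the finite case split: one must be careful that the "minimum coordinate" is well-defined (handle ties by fixing a coordinate ordering), that offsets are measured consistently, and that degenerate cases $k=0$ or $l=0$ are covered (then the equation involves only one side plus $r$, still a single condition on $n_0$). None of this is deep — the entire content is that the offset window is bounded, so the unbounded part of the data is a single index, whose admissible values form a subset of $\N$ that $\cN^1_{\mathfrak{s}}$ names for free. The monotonicity of the enumeration is used only implicitly, to make "minimum index" meaningful; it is not otherwise needed.
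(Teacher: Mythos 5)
Your proposal is correct and follows essentially the same route as the paper: a finite case split over the possible offset patterns of the coordinates relative to an anchor coordinate, with the admissible anchor values forming a subset of $\N$ that $\cN^1_{\mathfrak{s}}$ names as a unary predicate, and the offsets expressed by atomic $\mathfrak{s}^j(x)=y$ relations. The only cosmetic difference is that you anchor on the minimum coordinate with non-negative offsets, whereas the paper anchors on the first coordinate $m_1$ and allows signed offsets $|u_i|\leq t$, which avoids the case split over which coordinate is minimal.
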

\begin{proof}
Without loss of generality, we may assume $k\geq 1$. Let $\Sigma$ be the (finite) set of tuples $(u_1,\ldots,u_k,v_1,\ldots,v_l)$ of integers such that $|u_i|\leq t$ for all $i\leq k$ and $|v_j|\leq t$ for all $j\leq l$. Given $\sigma\in \Sigma$, let $X_\sigma$ be the set of $(\mbar,\nbar)\in X_A$ such that $m_i=m_1+u_i$ for all $i\leq k$ and $n_j=m_1+u_j$ for all $j\leq l$. By definition of $X_A$, we have $X_A=\bigcup_{\sigma\in\Sigma}X_\sigma$. Given $\sigma\in\Sigma$, let $P_\sigma$ be the set of $m\in\N$ such that $((m+u_1,\ldots,m+u_k),(m+v_1,\ldots,m+v_l))\in A(k,l,r)$. Then a tuple $(\mbar,\nbar)\in\N^k\times\N^l$ is in $X_\sigma$ if and only if it satisfies the formula
\[
\varphi(x_1,\ldots,x_k,y_1,\ldots,y_l):=x_1\in P_\sigma\wedge\bigwedge_{i=1}^kx_k=\mathfrak{s}^{u_i}(x_1)\wedge\bigwedge_{j=1}^l y_l={\mathfrak{s}}^{v_j}(x_1).
\]
So $X_\sigma$ is definable in $\cN^1_{\mathfrak{s}}$. Therefore $X_A$ is definable in $\cN^1_{\mathfrak{s}}$.
\end{proof}

The next lemma summarizes the main technical work required to analyze $A^{\indd}_0$ for geometrically sparse sets $A$.  For cleaner exposition, we only state the lemma for now, and postpone the proof until after we have used it to finish the proof of Theorem \ref{thm:mainGS}.

\begin{lemma}\label{lem:gsD}
Suppose $A\seq\Z^+$ is infinite and geometrically sparse. For any integers $k,l\geq 1$ and $r\in\Z$ there are $s,t\geq 0$ such that, for any $(\mbar,\nbar)\in A(k,l,r)$, if $\max(\mbar,\nbar)-\min(\mbar,\nbar)>t$ then $(\mbar,\nbar)\in A(k,l,r,s)$.
\end{lemma}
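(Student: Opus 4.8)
The goal is to show that, for geometrically sparse $A\seq\Z^+$, every solution $(\mbar,\nbar)\in A(k,l,r)$ with a large ``spread'' $\max(\mbar,\nbar)-\min(\mbar,\nbar)$ must ``decompose'' as in the definition of $A(k,l,r,s)$. First I would fix a witness $f\colon A\to\R^+$ that $A$ is geometrically sparse, set $d_0=\sup_{a\in A}|a-f(a)|$, and recall from Proposition \ref{prop:GSexpl} (or its proof) that, after coarsening, one may think of the elements of $A$ as clustered into blocks governed by a geometric sequence $(\lambda_m)$ with $\lambda_{m+1}\geq c\lambda_m$ for some $c>1$. The key quantitative tool should be Lemma \ref{lem:poon}, exactly as used in Proposition \ref{prop:gsss}: if a subsum $\sum_{i\in I}a_{m_i}-\sum_{j\in J}a_{n_j}$ is ``small'' relative to the largest term appearing in it, then in fact a proper sub-subsum of the $f$-values cancels, and hence that difference is small (bounded by a constant times $d_0$ times the number of terms). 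The plan is to use this to locate, inside a spread-out solution, a proper sub-collection of indices on which the equation already balances up to a bounded error $s'$.

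Here is the main argument I would carry out. Given $(\mbar,\nbar)\in A(k,l,r)$ with $k,l\geq 1$, consider the multiset of all indices $m_1,\dots,m_k,n_1,\dots,n_l$ and sort the corresponding $a$-values in decreasing order. Because $\lambda_{m+1}\geq c\lambda_m$, if two of these $a$-values come from blocks that are far apart, then the larger one dominates: the top block's contribution cannot be cancelled by everything strictly below it unless there is already a cancellation of $f$-values within the top few blocks. More precisely, I would choose $t$ large enough (depending on $k,l,r$) so that if $\max(\mbar,\nbar)-\min(\mbar,\nbar)>t$ then there is a ``gap'' in the block structure: a threshold such that every index is either in the ``high'' part $H$ or the ``low'' part $L$, with $L$ nonempty, $H$ nonempty, and the total $a$-mass of $L$ together with $r$ being much smaller (by a factor related to $c$ and to $k+l$) than the smallest $a$-value in $H$. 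Splitting the original equation $r+\sum a_{m_i}=\sum a_{n_j}$ along $H\cup L$, the ``high'' side reads $\sum_{i\in I}a_{m_i}-\sum_{j\in J}a_{n_j}=(\text{low terms plus }r)$, whose right-hand side is bounded in absolute value by some constant $s$; here $I=\{i:m_i\in H\}$ and $J=\{j:n_j\in H\}$. One then sets $s'=\sum_{i\in I}a_{m_i}-\sum_{j\in J}a_{n_j}$, which has $|s'|\leq s$, and checks directly that both defining equations of $A(k,l,r,s)$ hold: $r+s'+\sum_{i\notin I}a_{m_i}=\sum_{j\notin J}a_{n_j}$ is just the original equation rearranged, and $\sum_{i\in I}a_{m_i}=s'+\sum_{j\in J}a_{n_j}$ is the definition of $s'$. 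The only thing to verify is that $I\subsetneq[k]$ and $J\subsetneq[l]$ are not both empty and are both proper, which follows from $H$ and $L$ both being nonempty once one is slightly careful about the degenerate cases (e.g.\ if one side has all its indices in $H$, a symmetric choice of the gap on the other side handles it; choosing $t$ to force a gap in a \emph{common} refinement of both sides' block structures is the clean way).

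\textbf{Main obstacle.} The delicate point is pinning down \emph{where} to cut, i.e.\ producing the gap $H\mid L$ from a large spread. A large spread between $\min$ and $\max$ of the indices does not immediately give a single dominant block, because the intermediate indices could be densely packed; the geometric growth only guarantees that consecutive \emph{distinct} block-values grow by a factor $c$, so one must argue that among at most $k+l$ indices there must be a ``jump'' of size exceeding the cumulative factor needed to apply Lemma \ref{lem:poon}. Concretely, choosing $t$ so that $k+l$ indices spread over more than $t$ blocks forces some block-gap larger than $\log_c(\text{something like }c_0(k+l))$, where $c_0=\max_{j\le k+l}\epsilon_j^{-1}$ from Lemma \ref{lem:poon}, and then showing the terms above that gap cannot be cancelled by the bounded total below it — this bookkeeping, together with carefully handling the additive errors $r$ and the $|r_i|\le d_0$ perturbations, is the technical heart. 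Everything else is rearrangement of equations and absorbing constants into $s$ and $t$.
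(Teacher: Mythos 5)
Your overall strategy (exploit the geometric block structure, invoke Lemma \ref{lem:poon}, locate a gap, and decompose) is in the right spirit, but the decomposition you actually write down does not satisfy Definition \ref{def:Akl}, and this is a genuine gap rather than bookkeeping. You cut the multiset of terms at a gap into a high part $H$ and a low part $L$ and set $s'=\sum_{i\in I}a_{m_i}-\sum_{j\in J}a_{n_j}=(\text{low terms})+r$. But the low terms are themselves elements of $A$: the gap only makes them small \emph{relative to} the smallest element of $H$, not bounded by an absolute constant. Since $s$ must depend only on $k,l,r$ (and $A$), your $s'$ is in general unbounded and the claim $|s'|\le s$ fails. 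The fix --- and this is what the paper does via Proposition \ref{prop:sumfinish} --- is to decompose along a different pair of subsets: the smallness of the high-part sum relative to its largest term forces, by Lemma \ref{lem:poon}, an \emph{exact} cancellation $\sum_{i\in I_*}\lambda_{f(m_i)}=\sum_{j\in J_*}\lambda_{f(n_j)}$ for some nonempty $I_*,J_*$ contained in the high parts of the two sides; one then takes the definition's $I,J$ to be the complements of $I_*,J_*$, so that $s'$ is the sum of the perturbations $\theta_n=a_n-\lambda_{f(n)}$ over $I_*\cup J_*$, which is bounded by $(k+l)\Theta$. Your first paragraph gestures at this (``a proper sub-subsum of the $f$-values cancels''), but the executed argument abandons it; note also that Lemma \ref{lem:poon} only yields a vanishing sub-subsum, not that the entire high-part difference is small, so you cannot conclude boundedness of your $s'$ even after cancellation.

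A second, related gap: for $I\subsetneq[k]$ and $J\subsetneq[l]$ to both be proper, the cancelling sets $I_*$ and $J_*$ must both be nonempty, which requires a gap (hence a nonempty high part and a nonempty low part) \emph{within each of $\mbar$ and $\nbar$ separately}, not merely in the combined multiset. Your pigeonhole argument only produces a gap in the union, and ``a symmetric choice of the gap on the other side'' does not obviously dispose of the case where one side is tightly clustered. The paper devotes a separate inductive lemma (Lemma \ref{lem:AGl2}) to exactly this case: if the $\mbar$-indices have spread at most $t$, then either the tuple already lies in $A(k,l,r,s)$ or the whole tuple has bounded spread. Only after invoking that lemma for both sides can one assume gaps $m_u+p<m_{u+1}$ and $n_v+p<n_{v+1}$ and run the cancellation argument on the high parts. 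Without a substitute for this step, and without correcting the choice of $s'$, the proof is incomplete.
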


\begin{corollary}\label{cor:Hex}
If $A\seq\Z^+$ is infinite and geometrically sparse then $A^{\indd}$ is an interpretable reduct of $\cN^1_{\mathfrak{s}}$.
\end{corollary}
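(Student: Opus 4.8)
The goal is to show $A^{\indd}$ is an interpretable reduct of $\cN^1_{\mathfrak{s}}$. By Corollary \ref{cor:monstH} (since $A^{\indd}$ is an expansion of $A^{\indd}_0$ by unary predicates, and $\cN^1_{\mathfrak{s}}$ is already closed under unary-predicate expansions), it suffices to show that $A^{\indd}_0$ is an interpretable reduct of $\cN^1_{\mathfrak{s}}$. Fixing a monotonic enumeration $(a_n)_{n=0}^\infty$ of $A$, the identification $n\mapsto a_n$ gives the needed bijection between the universe of $A^{\indd}_0$ and $\N$; so I must check that for each basic relation $R_\varphi\in\cL^{\indd}_0$ — that is, $\varphi$ is $x=0$ or a homogeneous linear equation $x_1+\ldots+x_k=y_1+\ldots+y_l$ — the corresponding subset of index-tuples is definable in $\cN^1_{\mathfrak{s}}$. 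The relation $R_{x=0}$ is either empty or a single point (depending on whether $0\in A$), hence trivially definable, so the real content is the homogeneous equation case, i.e. the sets $A(k,l,0)$ in the notation of Definition \ref{def:Akl} (viewed as subsets of $\N^{k}\times\N^{l}$ via the enumeration).

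\textbf{Key steps.} First I would reduce to bounded-spread tuples: a solution $(\mbar,\nbar)\in A(k,l,0)$ either has $\max(\mbar,\nbar)-\min(\mbar,\nbar)\le t$ for the bound $t$ supplied by Lemma \ref{lem:gsD} (with $r=0$), or else it lies in $A(k,l,0,s)$ for the accompanying $s$, meaning it ``decomposes'' into solutions of two shorter equations with a bounded constant term. The bounded-spread part is handled directly by Proposition \ref{prop:unarysucc}, which says $\{(\mbar,\nbar)\in A(k,l,0):\max-\min\le t\}$ is definable in $\cN^1_{\mathfrak{s}}$. For the decomposable part, I would set up an induction on $k+l$: the outer statement to prove is that for every $k,l\ge 0$ and every $r\in\Z$, the set $A(k,l,r)$ (as a subset of $\N^{k+l}$ under the enumeration) is definable in $\cN^1_{\mathfrak{s}}$. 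Base cases $k+l$ small are immediate (e.g. $A(1,0,r)$ is the set of $m$ with $a_m=-r$, a finite or empty set; $A(1,1,r)$ is finite unless $r=0$, in which case it is the diagonal, definable via $\mathfrak{s}$; and $r\ne0$ can only be realized by finitely many solutions once $A$ has large enough gaps — this needs the $O(\log n)$ growth from Proposition \ref{prop:GSexpl} together with geometric sparsity to rule out infinitely many solutions, actually one just needs that each $A(k,l,r)$ with the spread unbounded forces decomposition, which Lemma \ref{lem:gsD} gives). For the inductive step on $A(k,l,r)$: write it as the union of the bounded-spread piece (definable by Proposition \ref{prop:unarysucc}) and $A(k,l,r,s)$; then $A(k,l,r,s)$ is a finite union, over all splittings $I\subsetneq[k]$, $J\subsetneq[l]$ (not both empty) and all $s'$ with $|s'|\le s$, of sets of the form $\{(\mbar,\nbar): (\mbar_I,\nbar_J)\in A(|I|,|J|,r+s') \text{ and } (\mbar_{[k]\setminus I},\nbar_{[l]\setminus J})\in A(k-|I|,l-|J|,-s')\}$; each factor has strictly fewer variables, so by the induction hypothesis each is definable in $\cN^1_{\mathfrak{s}}$, hence so is the union, hence so is $A(k,l,r)$. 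This closes the induction, and specializing $r=0$ gives definability of each $R_\varphi\in\cL^{\indd}_0$, completing the proof modulo Lemma \ref{lem:gsD}.

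\textbf{Main obstacle.} The genuinely hard content is entirely packaged in Lemma \ref{lem:gsD}, whose proof is deferred: one must show that for geometrically sparse $A$, any solution to a linear equation whose indices are spread out must split into two sub-solutions of the same type (with a bounded constant correction). This is where the geometry of the set $\{s/t : s,t\in f(A),\ t\le s\}$ being closed and discrete is used in an essential way — roughly, if the indices involved range over a wide interval, the largest terms dominate and can be matched against each other up to bounded error, because the ratios of distinct scales $\lambda_m$ stay bounded away from $1$ (Proposition \ref{prop:GSexpl}(ii)), which forces a cancellation at the top and lets one peel off a shorter sub-equation. The bookkeeping — tracking the constant term, controlling which indices go into $I$ versus $J$, and iterating — is delicate but, granting Lemma \ref{lem:gsD}, the present corollary is a clean induction. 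So within the scope of this corollary, the only mild subtlety is organizing the induction so that the constant term $r$ is allowed to vary (it must, since decomposition introduces $\pm s'$), and verifying the finitely-many-solutions base cases; everything substantive is inherited from Lemma \ref{lem:gsD} and Propositions \ref{prop:GSexpl} and \ref{prop:unarysucc}.
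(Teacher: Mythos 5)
Your proposal is correct and follows essentially the same route as the paper: reduce to $A^{\indd}_0$ via Corollary \ref{cor:monstH}, split each $A(k,l,r)$ into a bounded-spread piece handled by Proposition \ref{prop:unarysucc} and a decomposable piece supplied by Lemma \ref{lem:gsD}, and induct with the constant term $r$ allowed to vary. The only differences are cosmetic: you induct on $k+l$ (which works, since both factors of a decomposition have strictly fewer total variables, and is arguably tidier than the paper's nested induction on $\max\{k,l\}$ then $\min\{k,l\}$), and your parenthetical claim that $A(1,1,r)$ is finite for $r\neq 0$ is false in general (e.g.\ $A=\{2^n\}\cup\{2^n+1\}$), but this is harmless because, as you note, it is a bounded-spread set ($|m-n|\leq|r|$) and hence definable by Proposition \ref{prop:unarysucc} regardless of finiteness.
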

\begin{proof}
By Corollary \ref{cor:monstH}, it is enough to show that $A^{\indd}_0$ is an interpretable reduct of $\cN^1_{\mathfrak{s}}$.  In particular, we prove, by induction on integers $u\geq 0$, that for any $k,l\geq 0$ if $\max\{k,l\}=u$ then $A(k,l,r)$ is definable in $\cN^1_{\mathfrak{s}}$ for all $r\in \Z$. Note that, for any $k\geq 0$ and $r\in \Z$, both $A(k,0,r)$ and $A(0,k,r)$ are finite. So we may take our base case as $u=1$ and, in this case, assume $k=1=l$. For the base case, fix $r\in\Z$. If $(m,n)\in A(1,1,r)$ then $|m-n|\leq |r|$. Thus $A(1,1,r)$ is definable in $\cN^1_{\mathfrak{s}}$ by Proposition \ref{prop:unarysucc}. For the induction step, fix $u>0$ and assume that, for any $k,l\geq 0$, if $\max\{k,l\}<u$ then $A(k,l,r)$ is definable in $\cN^1_{\mathfrak{s}}$ for all $r\in \Z$. By induction on $0\leq v\leq u$, we show that, for any $k,l\geq 0$ if $\max\{k,l\}=u$ and $\min\{k,l\}=v$ then $A(k,l,r)$ is definable in $\cN^1_{\mathfrak{s}}$ for all $r\in \Z$. The base case $v=0$ follows as above. So fix $0<v\leq u$ and assume that, for all $k,l\geq 0$, if $\max\{k,l\}=u$ and $\min\{k,l\}<v$ then $A(k,l,r)$ is definable in $\cN^1_{\mathfrak{s}}$ for all $r\in \Z$. Fix $r\in\Z$.  Let $s,t\in\Z$ be as in Lemma \ref{lem:gsD}. Fix $k,l\geq 0$ such that $\max\{k,l\}=u$ and $\min\{k,l\}=v$. Let $\Sigma$ be the set of triples $(s',I,J)$ such that $s'\in\Z$, $I\subsetneq[k]$, and $J\subsetneq[l]$, with $|s'|\leq s$ and  $I$, $J$ not both empty. Given $(s',I,J)\in \Sigma$, let $A_{I,J}(k,l,r,s')$ be the set of $(\mbar,\nbar)\in\N^k\times\N^l$ such that
\[
((m_i)_{i\in I},(n_j)_{j\in J})\in A(|I|,|J|,r+s')\mand ((m_i)_{i\not\in I},(n_j)_{j\not\in J})\in A(k-|I|,l-|J|,\nv s').
\]
For any $(s',I,J)\in \Sigma$, we have $\max\{|I|,|J|\}<u$, $\max\{k-|I|,l-|J|\}\leq u$, and if $\max\{k-|I|,l-|J|\}=u$ then $\min\{k-|I|,l-|J|\}<v$. By both induction hypotheses, it follows that $A_{I,J}(k,l,r,s')$ is definable in $\cN^1_{\mathfrak{s}}$ for any $(s',I,J)\in\Sigma$ and $r\in \Z$. Let $X=X(k,l,r,t)$ be as in Proposition \ref{prop:unarysucc}. By Proposition \ref{prop:unarysucc} and Lemma \ref{lem:gsD}, we have 
\[
A(k,l,r)=X\cup \bigcup_{(s',I,J)\in\Sigma}A_{I,J}(k,l,r,s').
\]
Therefore $A(k,l,r)$ is definable in $\cN^1_{\mathfrak{s}}$ for any $r\in \Z$.  
\end{proof}

We now have all of the pieces necessary to prove the main result (modulo the proof of Lemma \ref{lem:gsD}, which is given in the next section). 

\begin{proof}[Proof of Theorem \ref{thm:mainGS}]
We may assume $A\seq\Z^+$. By Proposition \ref{prop:unaryQE} and Corollary \ref{cor:Hex}, $A^{\indd}$ is superstable of $U$-rank $1$.  By Proposition \ref{prop:gsss}, $A$ is sufficiently sparse. Altogether, $\Th(\cZ_A)$ is superstable of $U$-rank $\omega$ by Corollary \ref{cor:Acor}. 
\end{proof}

We have shown that if $A\seq\Z^+$ is geometrically sparse then $U(A^{\indd})=1$, which motivates the following question.

\begin{question}
For which ordinals $\alpha$ is there a set $A\seq\N$ such that $U(A^{\indd})=\alpha$? Is there $A\seq\N$ such that $\Th(A^{\indd})$ is strictly stable?
\end{question}

\subsection{Proof of Lemma \ref{lem:gsD}}\label{sec:agD}

Throughout this section, we fix an infinite geometrically sparse set $A\seq\Z^+$. Let $(a_n)_{n=0}^\infty$ monotonically enumerate $A$. We also fix a real sequence $(\lambda_m)_{m=0}^\infty$, real numbers $c>1$ and $\Theta\geq 0$, and a weakly increasing surjective function $f\colon\N\to\N$ as in Proposition \ref{prop:GSexpl}. For $n\in\N$, let $\theta_n=a_n-\lambda_{f(n)}$. We have $|\theta_n|\leq\Theta$ for all $n\in\N$. 

The proof of Lemma \ref{lem:gsD} requires a few preliminary steps.

\begin{proposition}\label{prop:agrowth}
There are real numbers $\delta>0$ and $b>1$ such that $\frac{a_n}{a_m}\geq \delta b^{n-m}$ for all $m,n\in\N$ with $m<n$.
\end{proposition}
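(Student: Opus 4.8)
The plan is to extract a clean geometric lower bound for ratios $a_n/a_m$ from the data provided by Proposition \ref{prop:GSexpl}, namely the geometric sequence $(\lambda_m)_{m=0}^\infty$ with $\lambda_{m+1}\geq c\lambda_m$ (so $\lambda_m/\lambda_{m'}\geq c^{m-m'}$ for $m\geq m'$), the uniform bound $|a_n-\lambda_{f(n)}|\leq\Theta$, and the weakly increasing surjection $f\colon\N\to\N$. The key combinatorial fact I would use is that since $f$ is weakly increasing and surjective onto $\N$, with fibers of uniformly bounded size (this bound, call it $L$, comes from $|A_m|$ being uniformly bounded in the proof of Proposition \ref{prop:GSexpl}), we have $f(n)\geq n/L - O(1)$ and $f(n)\leq n$; more precisely $n-n'\leq L(f(n)-f(n')) + O(1)$ whenever $n>n'$. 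Thus a gap of $n-m$ in the index of $A$ forces a gap of at least roughly $(n-m)/L$ in the index of the $\lambda$-sequence.

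First I would fix notation: let $L$ be a uniform bound on the fiber sizes of $f$, so that for $m<n$ we have $f(n)-f(m)\geq \frac{n-m}{L}-1$ (since between index $m$ and index $n$ the value of $f$ must increase, and it can stay constant on at most $L$ consecutive indices — I would state this carefully using the convexity of the fibers). Next, since all $a_i\geq 1$ and the $\lambda$'s are positive, write
\[
\frac{a_n}{a_m}\;\geq\;\frac{\lambda_{f(n)}-\Theta}{\lambda_{f(m)}+\Theta}.
\]
Because $(\lambda_m)$ grows at least geometrically with ratio $\geq c>1$, it is unbounded, so there is an index $m_0$ beyond which $\lambda_m\geq 2\Theta$, giving $\lambda_{f(n)}-\Theta\geq\tfrac12\lambda_{f(n)}$ once $f(n)\geq m_0$; and for all $m$, $\lambda_{f(m)}+\Theta\leq (1+\Theta)\lambda_{f(m)}$ since $\lambda_{f(m)}\geq\lambda_0>0$ (rescale so $\lambda_0\geq 1$, or just absorb a constant). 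Hence for $n$ large enough relative to $m$,
\[
\frac{a_n}{a_m}\;\geq\;\frac{1}{2(1+\Theta)}\cdot\frac{\lambda_{f(n)}}{\lambda_{f(m)}}\;\geq\;\frac{1}{2(1+\Theta)}\,c^{\,f(n)-f(m)}\;\geq\;\frac{1}{2(1+\Theta)}\,c^{\,\frac{n-m}{L}-1}.
\]
Setting $b=c^{1/L}>1$ and choosing $\delta>0$ small enough to absorb the constant $\frac{1}{2(1+\Theta)c}$ and to handle the finitely many exceptional pairs $(m,n)$ with $m<n$ but $f(n)<m_0$ (there are only finitely many such, since $f(n)\to\infty$, and on each of them $a_n/a_m$ is a positive real bounded below away from $0$ because $a_n\geq a_m\cdot\frac{1}{a_m}\geq$ a fixed constant — one just takes $\delta$ below the minimum of the finitely many values $a_n/(a_m b^{n-m})$), we obtain $\frac{a_n}{a_m}\geq\delta b^{n-m}$ for all $m<n$.

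The main obstacle I anticipate is bookkeeping rather than conceptual: the clean inequality $f(n)-f(m)\geq\frac{n-m}{L}-1$ needs the fiber-convexity of $f$ (each $f^{-1}(m)$ is an interval of integers) together with the uniform bound on fiber sizes, both of which are implicit in Proposition \ref{prop:GSexpl} but should be stated explicitly — I would either cite the construction there directly or re-derive that the sets $A_m=\{a\in A: b_m\leq a< b_{m+1}\}$ have size at most $2r+1$ (in the notation of that proof, since each $A_m$ is contained in an interval of length $2r$). The other minor care point is the finitely many small-index exceptional pairs, which is handled by shrinking $\delta$; this is routine and I would dispatch it in one sentence.
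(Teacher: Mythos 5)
Your proposal is correct and follows essentially the same route as the paper: both arguments convert the uniform bound $|a_n-\lambda_{f(n)}|\leq\Theta$ into the fact that $f$ must increase at least once every $K$ (your $L$) steps, deduce $\lambda_{f(n)}/\lambda_{f(m)}\geq \delta_1 c^{(n-m)/K}$, and then transfer this to $a_n/a_m$ using that $a_n/\lambda_{f(n)}$ is bounded between two positive constants. The only cosmetic difference is that the paper packages the last step multiplicatively (fixing $\epsilon<a_n/\lambda_{f(n)}<1/\epsilon$ for all $n$, since the ratio tends to $1$) rather than via additive $\pm\Theta$ bounds and a finite set of exceptional pairs.
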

\begin{proof}
Let $K=\lceil2\Theta\rceil+1$. For any $n\in\N$, we have $a_{n+K}>a_n+2\Theta$, and so $f(n+K)\geq f(n)+1$. Therefore $\lambda_{f(n+K)}\geq c\lambda_{f(n)}$ for all $n\in\N$. Given $n\in\N$ let $q(n)\geq 0$ and $r(n)\in\{0,\ldots,K-1\}$ be such that $n=q(n)K+r(n)$. For $m<n$, we have $n\geq m+q(n-m)K$, and so
\[
\lambda_{f(n)}\geq\lambda_{f(m+q(n-m)K)}\geq\lambda_{f(m)+q(n-m)}\geq c^{q(n-m)}\lambda_{f(m)}\geq \left(c^{\frac{1}{K}-1}c^{\frac{n-m}{K}}\right)\lambda_{f(m)}.
\]
So if we set $\delta_1=c^{\frac{1}{K}-1}$ and $b=c^{\frac{1}{K}}>1$ then $\frac{\lambda_{f(n)}}{\lambda_{f(m)}}\geq\delta_1b^{n-m}$ for any $m<n$.

For any $n\in\N$, we have $\frac{a_n}{\lambda_{f(n)}}=1+\frac{\theta_n}{\lambda_{f(n)}}$, and so $\lim_{n\rightarrow\infty}\frac{a_n}{\lambda_{f(n)}}=1$. Thus we may fix $\epsilon>0$ such that, for all $n\in\N$, $\epsilon<\frac{a_n}{\lambda_{f(n)}}<\frac{1}{\epsilon}$. Set $\delta=\delta_1\epsilon^2$. If $m<n$ then
\[
\frac{a_n}{a_m}\geq \frac{a_n}{\lambda_{f(n)}}\cdot\frac{\lambda_{f(n)}}{\lambda_{f(m)}}\cdot\frac{\lambda_{f(m)}}{a_m}\geq \delta b^{n-m}.\qedhere
\]
\end{proof}

\begin{proposition}\label{prop:sumfinish}
Fix integers $k,l\geq 1$ and $r\in\Z$. Set $s=\max\{|r|,\lceil(k+l)\Theta\rceil\}$. Fix $(\mbar,\nbar)\in A(k,l,r)$ and suppose there are $I_*\subseteq\{1,\ldots,k\}$ and $J_*\subseteq\{1,\ldots,l\}$ such that at least one of $I_*$ or $J_*$ is proper and $\sum_{i\in I_*}\lambda_{f(m_i)}=\sum_{j\in J_*}\lambda_{f(n_j)}$. Then $(\mbar,\nbar)\in A(k,l,r,s)$. 
\end{proposition}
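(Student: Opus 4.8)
The idea is to pass from the given relation $\sum_{i\in I_*}\lambda_{f(m_i)}=\sum_{j\in J_*}\lambda_{f(n_j)}$ among the $\lambda$'s to a relation among the $a$'s, at the cost of a single integer error term of size at most $\lceil(k+l)\Theta\rceil\le s$, and then to read off membership in $A(k,l,r,s)$ with the index sets $I=[k]\setminus I_*$ and $J=[l]\setminus J_*$.

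Concretely, I would first substitute $a_n=\lambda_{f(n)}+\theta_n$ and use the hypothesis to get
$$
\sum_{i\in I_*}a_{m_i}-\sum_{j\in J_*}a_{n_j}=\sum_{i\in I_*}\theta_{m_i}-\sum_{j\in J_*}\theta_{n_j}=:s'.
$$
The left-hand side is an integer, so $s'\in\Z$; and since $|\theta_n|\le\Theta$ we get $|s'|\le|I_*|\Theta+|J_*|\Theta\le(k+l)\Theta$, hence, $s'$ being an integer, $|s'|\le\lceil(k+l)\Theta\rceil\le s$. Now subtract this identity from the defining equation $r+\sum_{i=1}^k a_{m_i}=\sum_{j=1}^l a_{n_j}$ of $A(k,l,r)$ to obtain $r+s'+\sum_{i\notin I_*}a_{m_i}=\sum_{j\notin J_*}a_{n_j}$. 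With $I=[k]\setminus I_*$ and $J=[l]\setminus J_*$, the rearranged identity $\sum_{i\in I_*}a_{m_i}=s'+\sum_{j\in J_*}a_{n_j}$ is exactly the second equation in the definition of $A(k,l,r,s)$, and $r+s'+\sum_{i\notin I_*}a_{m_i}=\sum_{j\notin J_*}a_{n_j}$ is exactly the first. It remains only to check the side conditions $I\subsetneq[k]$, $J\subsetneq[l]$, and that $I,J$ are not both empty. Here one uses $\{\lambda_m:m\in\N\}\subseteq\R^+$: since the $\lambda_m$ are positive, the relation $\sum_{i\in I_*}\lambda_{f(m_i)}=\sum_{j\in J_*}\lambda_{f(n_j)}$ forces $I_*=\emptyset$ if and only if $J_*=\emptyset$; assuming $I_*,J_*$ not both empty — the case in which the proposition is applied — they are then both nonempty, so $I$ and $J$ are proper, while $I,J$ are not both empty precisely because at least one of $I_*,J_*$ is proper by hypothesis. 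Hence $(\mbar,\nbar)\in A(k,l,r,s)$ with this $I$, $J$, and $s'$.

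I do not expect a real obstacle here: the content is essentially bookkeeping. The two points that deserve a moment of attention are (a) recognizing that $s'$ is an integer, which is exactly what lets the crude bound $(k+l)\Theta$ be rounded up to $\lceil(k+l)\Theta\rceil$ and thereby absorbed into $s$, and (b) matching up the index sets, namely that complementation $I_*\mapsto[k]\setminus I_*$, $J_*\mapsto[l]\setminus J_*$ converts the properness and non-emptiness hypotheses on $I_*,J_*$ into the properness and non-emptiness conditions built into the definition of $A(k,l,r,s)$ — and it is positivity of the $\lambda_m$ that keeps these in lockstep, by excluding the situation in which exactly one of $I_*,J_*$ is empty.
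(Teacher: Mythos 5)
Your proof is correct and is essentially the paper's own argument: the same $s'=\sum_{i\in I_*}\theta_{m_i}-\sum_{j\in J_*}\theta_{n_j}$, the same observation that it is an integer with $|s'|\le(k+l)\Theta\le s$, and the same complementary witnesses $I=[k]\setminus I_*$, $J=[l]\setminus J_*$. Your extra check of the properness/non-emptiness side conditions via positivity of the $\lambda_m$ is a point the paper's proof leaves implicit, but it does not change the route.
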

\begin{proof}
Let $I=\{1,\ldots,k\}\backslash I_*$ and $J=\{1,\ldots,l\}\backslash J_*$. Define
\[
x=\sum_{i\in I}a_{m_i},~y=\sum_{j\in J}a_{n_j},\mand s'=\sum_{i\in I_*}\theta_{m_i}-\sum_{j\in J_*}\theta_{n_j}.
\]
Since $(\mbar,\nbar)\in A(k,l,r)$ and $\sum_{i\in I_*}\lambda_{f(m_i)}=\sum_{j\in J_*}\lambda_{f(n_j)}$, we have $x+r+s'=y$. In particular, $s'=y-x-r\in\Z$. Note also that $|s'|\leq (k+l)\Theta\leq s$. It follows that $(\mbar,\nbar)\in A(k,l,r,s)$ as desired. 
\end{proof}

\begin{lemma}\label{lem:AGl2}
Fix integers $k,l\geq 1$ and $r\in\Z$. Set $s=\max\{|r|,\lceil(k+l)\Theta\rceil\}$. For any $t\geq 0$, there is some $t_*=t_*(k,l,r,t)\geq 0$ such that, for any $(\mbar,\nbar)\in A(k,l,r)$, if $\max\mbar-\min\mbar\leq t$ then either $(\mbar,\nbar)\in A(k,l,r,s)$ or $\max(\mbar,\nbar)-\min(\mbar,\nbar)\leq t_*$. 
\end{lemma}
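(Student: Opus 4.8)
The plan is to pass from the defining equation of $A(k,l,r)$ to a bounded signed combination of ratios $\lambda_b/\lambda_a$ with $b\le a$, and then invoke the geometric input packaged in Lemma~\ref{lem:poon}. Fix $(\mbar,\nbar)\in A(k,l,r)$ and put $M=\max(\mbar,\nbar)$, so that some coordinate of $(\mbar,\nbar)$ equals $M$. Using $r+\sum_i a_{m_i}=\sum_j a_{n_j}$ together with $a_n=\lambda_{f(n)}+\theta_n$, $|\theta_n|\le\Theta$, gives
\[
\sum_{j=1}^l\lambda_{f(n_j)}-\sum_{i=1}^k\lambda_{f(m_i)}=\rho,\qquad |\rho|\le |r|+(k+l)\Theta .
\]
Dividing by $\lambda_{f(M)}>0$ and writing $x_i=\lambda_{f(m_i)}/\lambda_{f(M)}$, $y_j=\lambda_{f(n_j)}/\lambda_{f(M)}$, we get a sum $\sum_j y_j-\sum_i x_i=\rho/\lambda_{f(M)}$ of $k+l$ elements of $\pm Q$, where $Q=\{x\inv:x\in X\}$ and $X=\{\lambda_a/\lambda_b:a\ge b\}\seq\R^{\geq 1}$ is closed and discrete by geometricity of $\{\lambda_m\}$ (Proposition~\ref{prop:GSexpl}); moreover at least one term has absolute value $1$ (the one from a coordinate equal to $M$), and the total has absolute value at most $(|r|+(k+l)\Theta)/\lambda_{f(M)}$.

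Next I would split according to the zero-sum subsets of the multiset $\{y_j\}\cup\{-x_i\}$. If some \emph{proper} nonempty subset sums to $0$, it picks out $I_*\seq[k]$ and $J_*\seq[l]$, not both full, with $\sum_{i\in I_*}x_i=\sum_{j\in J_*}y_j$; clearing denominators gives $\sum_{i\in I_*}\lambda_{f(m_i)}=\sum_{j\in J_*}\lambda_{f(n_j)}$ with at least one of $I_*,J_*$ proper, so Proposition~\ref{prop:sumfinish} yields $(\mbar,\nbar)\in A(k,l,r,s)$ and we are done. If \emph{no} nonempty subset sums to $0$, then after reordering so the unit term is first the configuration lies in the set $Q_{k+l}$ of Lemma~\ref{lem:poon}, so $|\rho|/\lambda_{f(M)}>\epsilon_{k+l}$; hence $\lambda_{f(M)}<(|r|+(k+l)\Theta)/\epsilon_{k+l}$, so $a_M\le\lambda_{f(M)}+\Theta$ is bounded in terms of $k,l,r$, and since $(a_n)$ is strictly increasing every coordinate of $(\mbar,\nbar)$ — each $\le M$ — lies below an absolute bound, whence $\max(\mbar,\nbar)-\min(\mbar,\nbar)$ is bounded. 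Finally, if the whole set is the \emph{only} nonempty zero-sum subset, then $\rho=0$ and $\sum_j\lambda_{f(n_j)}=\sum_i\lambda_{f(m_i)}$. Let $p_0$ be a coordinate minimizing $\lambda_{f(\cdot)}$: if all coordinates share the same $\lambda_f$-value then all the $a_{\cdot}$ differ pairwise by at most $2\Theta$ and the spread is bounded; otherwise the unit term (of maximal $\lambda_f$-value) is not the one at $p_0$, so deleting it leaves $k+l-1$ terms that still include a term of absolute value $1$ and have no zero-sum subset (such a subset would be a proper zero-sum subset of the whole), and Lemma~\ref{lem:poon} gives $\lambda_{f(p_0)}/\lambda_{f(M)}>\epsilon_{k+l-1}$, hence $\lambda_{f(p)}>\epsilon_{k+l-1}\lambda_{f(M)}$ for \emph{every} coordinate $p$. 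Splitting on whether $\lambda_{f(M)}>2\Theta/\epsilon_{k+l-1}$, one gets in either sub-case that $a_M/a_p$ is bounded for all $p$ (or that $a_M$ itself is bounded), so Proposition~\ref{prop:agrowth} bounds $\max(\mbar,\nbar)-\min(\mbar,\nbar)$.

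Letting $t_*$ be the maximum of the explicit bounds produced in the last two cases — these involve only $k,l,r$, the fixed data $A,(\lambda_m),f,\Theta$, the constants $\delta,b$ of Proposition~\ref{prop:agrowth}, and $\epsilon_{k+l},\epsilon_{k+l-1}$ from Lemma~\ref{lem:poon} — finishes the argument; note that in this argument the hypothesis $\max\mbar-\min\mbar\le t$ plays no role, so $t_*$ may even be taken independent of $t$. I expect the main obstacle to be the third case: a zero-sum configuration in which \emph{only} the full sum vanishes produces no sub-balance, and one must instead wring out enough rigidity — via deletion of the $\lambda_f$-smallest term and a second appeal to Lemma~\ref{lem:poon} — to force all the $\lambda_{f(\cdot)}$, and hence all coordinates, to cluster. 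A secondary point requiring care is the passage, in the last two cases, from an upper bound on $\lambda_{f(M)}$ or a lower bound on the ratios $\lambda_{f(p)}/\lambda_{f(M)}$ to an actual bound on the indices; this is exactly where Proposition~\ref{prop:agrowth} (equivalently, the fact that the fibers of $f$ have size at most $\lceil2\Theta\rceil+1$, implicit in its proof) is used.
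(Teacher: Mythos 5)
Your proof is correct, but it takes a genuinely different route from the paper's, and in fact proves more. The paper's proof of Lemma \ref{lem:AGl2} uses the hypothesis $\max\mbar-\min\mbar\leq t$ essentially: it enumerates finitely many ``shapes'' for $\mbar$ and then pins down the coordinates of $\nbar$ one at a time by induction, at each stage applying Lemma \ref{lem:poon} to a truncated ratio $d_{m_1}/a_{m_1}$ and Proposition \ref{prop:agrowth} to convert the resulting bound into an index bound. You instead normalize the full relation $\sum_j\lambda_{f(n_j)}-\sum_i\lambda_{f(m_i)}=\rho$ by $\lambda_{f(M)}$ and trichotomize on the zero-sum index subsets of the resulting $(k+l)$-term signed sum: a proper nonempty zero-sum subset gives membership in $A(k,l,r,s)$ via Proposition \ref{prop:sumfinish} (positivity of the $\lambda$'s forces both $I_*$ and $J_*$ to be nonempty, which is what that proposition's proof actually requires); no zero-sum subset forces $\lambda_{f(M)}\leq(|r|+(k+l)\Theta)/\epsilon_{k+l}$, hence bounds $M$ outright; and in the remaining case your deletion trick (remove a $\lambda_f$-minimal term and apply Lemma \ref{lem:poon} to the surviving $k+l-1$ terms, whose sum is $\pm\lambda_{f(p_0)}/\lambda_{f(M)}$) correctly forces all the $\lambda_{f(\cdot)}$ to lie within a factor $\epsilon_{k+l-1}$ of $\lambda_{f(M)}$, after which Proposition \ref{prop:agrowth} bounds the spread. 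All three cases check out; the only points needing care in a write-up are normalizing $\epsilon_{k+l-1}\leq 1$ and noting that $p_0$ is distinct from the unit term precisely because the all-$\lambda_f$-values-equal sub-case has been disposed of first. Since you never use the clustering hypothesis on $\mbar$, your argument establishes Lemma \ref{lem:gsD} directly (in contrapositive form), so the paper's two-stage structure --- Lemma \ref{lem:AGl2} followed by the gap-finding argument of Section \ref{sec:agD} --- becomes unnecessary; your version is shorter and yields a $t_*$ independent of $t$.
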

\begin{proof}
Let $\delta>0$ and $b>1$ be as in Proposition \ref{prop:agrowth}. Fix $r\in\Z$ and $t\geq 0$. Let $X$ be the set of $(\mbar,\nbar)\in A(k,l,r)$ such that  $m_k+t\geq m_1\geq\ldots\geq m_k$ and  $n_1\geq\ldots\geq n_l$. It suffices to find $t_*\geq 0$ such that, for all $(\mbar,\nbar)\in X$, either $(\mbar,\nbar)\in A(k,l,r,s)$ or $|m_1-n_j|\leq t_*$ for all $j\leq l$. Fix $(\mbar,\nbar)\in X$. We claim that $n_1\leq m_1+\log_b \frac{k}{\delta}$, which means $n_j\leq m_1+\log_b \frac{k}{\delta}$ for all $j\leq l$. If $n_1\leq m_1$ this is immediate, so we may assume $m_1<n_1$. We have
\[
a_{n_1}\leq a_{n_1}+\ldots+a_{n_l}=a_{m_1}+\ldots+a_{m_k}\leq ka_{m_1},
\]
and so $\frac{a_{n_1}}{a_{m_1}}\leq k$. By Proposition \ref{prop:agrowth}, $n_1-m_1\leq\log_b \frac{k}{\delta}$, as desired. 

Now, to prove the result, it suffices to construct $t_1,\ldots,t_l$ such that if $(\mbar,\nbar)\in X$ then either $(\mbar,\nbar)\in A(k,l,r,s)$ or $m_1\leq n_j+t_i$ for all $j\leq l$. Indeed, given this we may then define $t_*=\max\{t_1,\ldots,t_l,\log_b \frac{k}{\delta}\}$. We proceed by induction on $j$, treating $j=0$ as a vacuous base case.

Fix $w\in\{1,\ldots,l\}$ and suppose we have constructed $t_p$ for $p<w$. Let $\Sigma$ be the (finite) set of tuples $(i_1,\ldots,i_k,j_1,\ldots,j_{w-1})$ of integers such that $0\leq i_p\leq t$ for all $p\leq k$ and $|j_p|\leq t_p$ for all $p<w$. Given $\sigma\in\Sigma$, let $X_\sigma$ be the set of $(\mbar,\nbar)\in X$ such that $m_p=m_1-i_p$ for all $p\leq k$ and $n_p=m_1-j_p$ for all $p<w$. By induction $X=\bigcup_{\sigma\in\Sigma}X_\sigma$. We fix $\sigma\in\Sigma$ and construct $v_\sigma>0$ such that if $(\mbar,\nbar)\in X_\sigma$ then either $(\mbar,\nbar)\in A(k,l,r,s)$ or $m_1\leq n_w+v_\sigma$. Given this, we will then set $t_w=\max\{v_\sigma:\sigma\in\Sigma\}$. So fix $\sigma\in\Sigma$.

Given $m\geq\max\{i_1,\ldots,i_k,j_1,\ldots,j_{w-1}\}$, define
\[
d_m=r+(a_{m-i_1}+\ldots+a_{m-i_k})-(a_{m-j_1}+\ldots+a_{m-j_{w-1}}).
\]

\noit{Claim}: There is some $\epsilon>0$ such that, for any $(\mbar,\nbar)\in X_\sigma$, either $(\mbar,\nbar)\in A(k,l,r,s)$ or $\frac{d_{m_1}}{a_{m_1}}\geq\epsilon$.

Before proving the claim, we use it to finish the construction of $v_\sigma$. Let $\epsilon>0$ be as in the claim. Fix $(\mbar,\nbar)\in X_\sigma$. We have
\[
d_{m_1}=a_{n_w}+\ldots+a_{n_l}\leq (l-w+1)a_{n_w},
\]
and so, by the claim, either $(\mbar,\nbar)\in A(k,l,r,s)$ or 
\[
\frac{a_{m_1}}{a_{n_w}}\epsilon\leq\frac{a_{m_1}}{a_{n_w}}\left(\frac{d_{m_1}}{a_{m_1}}\right)=\frac{d_{m_1}}{a_{n_w}}\leq l-w+1.
\]
By Proposition \ref{prop:agrowth}, we may set $v_\sigma=\log_b\left(\frac{l-w+1}{\delta\epsilon}\right)$.

\noit{Proof of Claim}: Note that $\frac{d_{m_1}}{a_{m_1}}>0$ for any $(\mbar,\nbar)\in X_\sigma$, and so it suffices to find $\epsilon$ satisfying the result for sufficiently large $m_1$. Let $S=\{\lambda_m:m\in\N\}$, and recall that $S$ is geometric. Let $Q=\{\pm\frac{s_1}{s_2}:s_1,s_2\in S,~s_1\leq s_2\}$. Let $u=k+w-1$. By Lemma \ref{lem:poon}, we may fix $\epsilon>0$ such that, for any $q_1,\ldots,q_u\in Q$, if some $q_i=1$ and $\sum_{i\in Y}q_i\neq 0$ for all nonempty $Y\seq\{1,\ldots,u\}$, then $|q_1+\ldots+q_u|\geq 4\epsilon$.

Given $m\geq i_*:=\max\{i_1,\ldots,i_k,j_1,\ldots,j_{w-1}\}$, set 
\begin{align*}
\phi_m &=(\theta_{m-i_1}+\ldots+\theta_{m-i_k})-(\theta_{m-j_1}+\ldots+\theta_{m-j_{w-1}}),\mand\\
\eta_m &= (\lambda_{f(m-i_1)}+\ldots+\lambda_{f(m-i_k)})-(\lambda_{f(m-j_1)}+\ldots+\lambda_{f(m-j_{w-1})}).
\end{align*}
For $m\geq i_*$, we have $|\phi_m|\leq s$ and $d_m=r+\eta_m+\phi_m$. For $m\geq i_*$, let 
\[
p(m)=\max\{f(m-i_1),\ldots,f(m-i_k),f(m-j_1),\ldots,f(m-j_{w-1})\},
\]
and set $q_m=\frac{\eta_m}{\lambda_{p(m)}}$. We first show that, for any $(\mbar,\nbar)\in X_\sigma$, either $(\mbar,\nbar)\in A(k,l,r,s)$ or $q_{m_1}\geq 4\epsilon$. So fix $(\mbar,\nbar)\in X_\sigma$. We claim that if $q_{m_1}<4\epsilon$ then $(\mbar,\nbar)\in A(k,l,r,s)$. Indeed, if $0\leq q_{m_1}<4\epsilon$ then, by choice of $\epsilon$, there are nonempty $I_*\seq\{1,\ldots k\}$ and $J_*\seq\{1,\ldots,w-1\}$ such that $\sum_{i\in I_*}\lambda_{f(m_i)}=\sum_{j\in J_*}\lambda_{f(n_j)}$. So $(\mbar,\nbar)\in A(k,l,r,s)$ by  Proposition \ref{prop:sumfinish}. On the other hand, if $q_{m_1}<0$ then $\eta_{m_1}<0$, and so $a_{n_w}+\ldots+a_{n_l}=d_{m_1}\leq r+\theta_{m_1}\leq r+s$. Therefore $(\mbar,\nbar)\in A(k,l,r,s)$ witnessed by $I=\emptyset$, $J=\{w,\ldots,l\}$, and $s'=a_{n_w}+\ldots+a_{n_l}-r$. 

Finally, note that if $(\mbar,\nbar)\in X_\sigma$ then $i_1=0$, and so $f(m_1)\leq p(m_1)$. We may choose an integer $m_*>0$ such that if $m\geq m_*$ then
\[
\left|\frac{r+\phi_m}{\lambda_{f(m)}}\right|\leq\epsilon\mand a_m=\lambda_{f(m)}+\theta_m\leq 2\lambda_{f(m)}.
\]
Therefore, if $(\mbar,\nbar)\in X_\sigma\backslash A(k,l,r,s)$ and $m_1\geq m_*$ then, since $q_{m_1}\geq 4\epsilon$ by the above, we have
\[
\frac{d_{m_1}}{a_{m_1}}=\frac{r+\eta_{m_1}+\phi_{m_1}}{a_{m_1}}\geq\frac{\eta_{m_1}}{2\lambda_{f(m_1)}}-\left|\frac{r+\phi_m}{\lambda_{f(m)}}\right|\geq\frac{q_{m_1}}{2}-\epsilon\geq\epsilon.\qedhere
\]
\end{proof}

\setcounter{equation}{0}

We can now prove Lemma \ref{lem:gsD}.

\begin{proof}[Proof of Lemma \ref{lem:gsD}]
Fix $k,l\geq 1$ and $r\in\Z$. We want to find $s,t\geq 0$ such that, for any $(\mbar,\nbar)\in A(k,l,r)$, if $\max(\mbar,\nbar)-\min(\mbar,\nbar)>t$ then $(\mbar,\nbar)\in A(k,l,r,s)$.

Let $S=\{\lambda_m:m\in\N\}$, and recall that $S$ is geometric. Let $Q=\{\pm\frac{s_1}{s_2}:s_1,s_2\in S,~s_1\leq s_2\}$. By Lemma \ref{lem:poon}, we may fix $0<\epsilon<1$ such that, for any $w\leq k+l$ and any $q_1,\ldots,q_w\in Q$, if some $q_i=1$ and $\sum_{i\in X}q_i\neq 0$ for all nonempty $X\seq\{1,\ldots,w\}$, then $|q_1+\ldots+q_w|\geq\epsilon$. 

Let $k_*=\max\{k,l\}$. Choose $m_*>0$ such that, for all $m\geq m_*$,
\[
\frac{2k_*}{c^m}<\frac{\epsilon}{2},\hspace{10pt}\frac{2k_*\Theta+|r|}{\lambda_{f(m)}}<\frac{\epsilon}{2},\mand a_m\leq 2\lambda_{f(m)}.
\]
As in the proof of Proposition \ref{prop:agrowth}, we have $f(n+\lceil 2\Theta\rceil+1)\geq f(n)+1$ for all $n\in\N$. So we may choose $p>0$ such that for any $m,n$, if $m-n>p$ then $f(m)-f(n)>m_*$. Let $t_1=t'(k,l,r,kp)$ and $t_2=t'(l,k,r,lp)$ be as in Lemma \ref{lem:AGl2}. Set $t=\max\{t_1,t_2\}$ and $s=\max\{|r|,\lceil(k+l)\Theta\rceil\}$. 

Fix $(\mbar,\nbar)\in A(k,l,r)$ such that $\max(\mbar,\nbar)-\min(\mbar,\mbar)>t$. We want to show $(\mbar,\nbar)\in A(k,l,r,s)$. Without loss of generality, we may assume $m_1\leq\ldots\leq m_k$ and $n_1\leq\ldots\leq n_l$.  By choice of $t$ and Lemma \ref{lem:AGl2}, we may also assume $m_k> m_1+kp$ and $n_l> n_1+lp$. Therefore, we may fix $u\in\{1,\ldots,k-1\}$ and $v\in\{1,\ldots,l-1\}$ such that $m_u+p<m_{u+1}$ and $n_v+p<n_{v+1}$.

Without loss of generality, we may assume  that $a_{m_1}+\ldots+a_{m_u}\geq a_{n_1}+\ldots+a_{n_v}$. 
Define
\[
d=(a_{m_1}+\ldots+a_{m_u})-(a_{n_1}+\ldots+a_{n_v})+(\theta_{m_u+1}+\ldots+\theta_{m_k})-(\theta_{n_v+1}+\ldots+\theta_{n_l}).
\]
Note that if $u<i\leq k$ and $v<j\leq l$ then $m_i>m_u+p\geq m_*$ and $n_j>n_v+p\geq m_*$. Therefore 
\begin{equation}
|d|\leq ka_{m_u}+2k_*\Theta\leq 2k\lambda_{f(m_u)}+2k_*\Theta.
\end{equation}
Moreover, since $(\mbar,\nbar)\in A(k,l)$, we have
\begin{equation}
\lambda_{f(m_{u+1})}+\ldots+\lambda_{f(m_k)}+r=\lambda_{f(n_{v+1})}+\ldots+\lambda_{f(n_l)}-d.
\end{equation}
Let $m=\max\{m_k,n_l\}$. After dividing both sides of $(2)$ by $\lambda_{f(m)}$, and rearranging, we obtain
\begin{equation}
q_1+\ldots+q_w=\frac{d-r}{\lambda_{f(m)}},
\end{equation}
where $w=k+l-u-v$ and $q_i\in Q$, with at least one $q_i=1$. Since $m-m_u\geq m_k-m_u>p$, we have $f(m)-f(m_u)>m_*$ and so it follows from (1) that
\begin{equation}
\left|\frac{d-r}{\lambda_{f(m)}}\right|\leq \frac{2k \lambda_{f(m_u)}}{\lambda_{f(m)}}+\frac{2k_*\Theta+|r|}{\lambda_{f(m)}}<\frac{2k}{c^{m_*}}+\frac{\epsilon}{2}<\epsilon
\end{equation}
By $(3)$, $(4)$, and choice of $\epsilon$, there are nonempty $I_*\seq\{u+1,\ldots,k\}$ and $J_*\seq\{v+1,\ldots,l\}$ such that $\sum_{i\in I_*}\lambda_{f(m_i)}=\sum_{j\in J_*}\lambda_{f(n_j)}$. By Proposition \ref{prop:sumfinish},  $(\mbar,\nbar)\in A(k,l,r,s)$ as desired. 
\end{proof}

\subsection{Induced structure for special cases}\label{sec:index}

We have shown that if $A\seq\Z^+$ is infinite and geometrically sparse then $A^{\indd}$ is an interpretable reduct of $\cN^1_{\mathfrak{s}}$. In this section, we discuss how this can be refined for the examples given in Section \ref{sec:GS}. 

\begin{definition}
Let $\dot{\N}$ be the structure with universe $\N$ in the language of equality. Let $\cN^{\ap}$ and $\cN^{\ap}_{\mathfrak{s}}$ be the expansions of $\dot{\N}$ and $\cN_{\mathfrak{s}}$, respectively, by unary predicates for arithmetic progressions $m\N+r$, where $0\leq r<m<\omega$. 
\end{definition}

In \cite{PaSk}, Palac\'{i}n and Sklinos give a detailed description of $A^{\indd}$ for the examples in Fact \ref{fact:exPaSk}. In our terminology, this analysis can be summarized as follows.

\begin{fact}\textnormal{\cite{PaSk}}\label{fact:inddPaSk}
\begin{enumerate}[$(a)$]
\item If $A=\Fac$ then $A^{\indd}$ is interdefinable with $\dot{\N}$.
\item If $A=\Pi(q_1,\ldots,q_t)$, with $q_i\geq 2$ and $t\geq 1$, then $A^{\indd}_0$ is an interpretable reduct of $\cN_{\mathfrak{s}}$ and $A^{\indd}$ is an interpretable reduct of $\cN^{\ap}_{\mathfrak{s}}$. 
\end{enumerate}
\end{fact}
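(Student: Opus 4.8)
The plan is to specialize the machinery of Section~\ref{sec:GSstab} and track which unary predicates on $\N$ are really needed; this re-derives, inside the present framework, the analysis of $A^{\indd}$ carried out by Palac\'{i}n and Sklinos. Both $\Fac$ and $\Pi(q_1,\ldots,q_t)$ are geometrically sparse, since their consecutive ratios tend to infinity, so Proposition~\ref{prop:esgs} applies and Corollary~\ref{cor:Hex} already gives that $A^{\indd}$ is an interpretable reduct of $\cN^1_{\mathfrak{s}}$ via the bijection $a_n\mapsto n$. By Proposition~\ref{prop:unAP} it then suffices to understand $A^{\indd}_0$ (the relations $R_\varphi$ with $\varphi$ of the form $x_1+\ldots+x_k=y_1+\ldots+y_l$) together with the residue predicates $C_{m,r}$, which on $A$ are just the sets $A\cap(m\Z+r)$.

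For part $(a)$, first I would show that for $A=\Fac$ each $R_\varphi$ is a finite union of equality-definable sets. Given a solution $\sum_i a_{m_i}=\sum_j a_{n_j}$, cancel common summands as multisets; on the ``reduced'' part the largest index $M$ now occurs on exactly one side with positive multiplicity, and comparing with the other side (all of whose summands are strictly smaller) bounds $M$ in terms of $k$ and $l$. Thus there are only finitely many reduced patterns, each extendable by an arbitrary matched multiset of common indices, so $R_\varphi$ is equality-definable. Next, since $m\mid n!$ for all $n\ge m$, the set $\Fac\cap m\Z$ is cofinite in $\Fac$ and $\Fac\cap(m\Z+r)$ is finite for $r\neq 0$, so every $C_{m,r}$ is finite or cofinite, hence equality-definable. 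With Proposition~\ref{prop:unAP} this makes every $R_\varphi$ ($\varphi\in\cL$) equality-definable on $\Fac$, so $\Fac^{\indd}$ is a reduct of $\dot{\N}$; conversely $\dot{\N}$ is a reduct of $\Fac^{\indd}$ since $x=y$ is an atomic $\cL$-formula. Hence they are interdefinable.

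For part $(b)$, write $b_n$ for the $n$-th element of $\Pi(q_1,\ldots,q_t)$. In the proof of Corollary~\ref{cor:Hex} the only unary predicates on $\N$ that appear are the sets $P_\sigma=\{m:((m+u_1,\ldots,m+u_k),(m+v_1,\ldots,m+v_l))\in A(k,l,r)\}$ supplied by Proposition~\ref{prop:unarysucc}, so it is enough to show each $P_\sigma$ is finite or cofinite --- these being the only subsets of $\N$ definable in $\cN_{\mathfrak{s}}$. Since $b_{m+1}/b_m\to\infty$, for all large $m$ the equation $r+\sum_i b_{m+u_i}=\sum_j b_{m+v_j}$ forces, by peeling off dominant terms in the spirit of Lemma~\ref{lem:poon}, that the multisets $\{u_i\}$ and $\{v_j\}$ coincide and $r=0$; so $P_\sigma$ is cofinite in that balanced case and finite otherwise. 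This upgrades Corollary~\ref{cor:Hex} to ``$A^{\indd}_0$ is an interpretable reduct of $\cN_{\mathfrak{s}}$''. It remains to place the $C_{m,r}$: transported to $\N$ these are the sets $\{n:b_n\equiv r\pmod m\}$, and an induction on $t$ shows $n\mapsto b_n\bmod m$ is eventually periodic (at each level a power $q^k\bmod N$ is eventually periodic in $k$, and the exponents in the tower are themselves eventually periodic modulo any fixed integer and tend to infinity). Eventually periodic subsets of $\N$ are finite Boolean combinations of arithmetic progressions and singletons, hence definable in $\cN^{\ap}_{\mathfrak{s}}$; since $A^{\indd}$ is $A^{\indd}_0$ expanded by the $C_{m,r}$, it is an interpretable reduct of $\cN^{\ap}_{\mathfrak{s}}$.

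The step I expect to be the main obstacle is establishing that the $P_\sigma$ are finite or cofinite for towers with $t\ge 2$: unlike $\Pi(q)$, where $P_\sigma$ reduces cleanly to whether a fixed integer is a power of $q$, here $b_{m+1}/b_m$ itself grows with $m$, so the ``no carrying for large $m$'' estimate must compare nested exponentials carefully; the cleanest route is probably to run the dominant-term argument by induction on the number of exponents, mirroring the inductive use of Lemma~\ref{lem:poon} in Section~\ref{sec:agD}.
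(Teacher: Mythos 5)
The paper does not actually prove this Fact; it quotes it from \cite{PaSk} and instead proves the generalization Theorem~\ref{thm:index} (with Remark~\ref{rem:index}). Your proposal re-derives the Fact by essentially that same method: your part $(a)$ is the multiset-cancellation claim from the proof of Theorem~\ref{thm:index}$(a)$ (cancel common summands, then $M!\leq l(M-1)!$ bounds the largest surviving index by $\max\{k,l\}$) together with Remark~\ref{rem:index}$(c)$ for the residues, and is correct as written. Your part $(b)$ correctly isolates the point that, in the proof of Corollary~\ref{cor:Hex}, the only unary predicates needed beyond successor are the sets $P_\sigma$ of Proposition~\ref{prop:unarysucc}, and that eventual periodicity of $n\mapsto b_n\bmod m$ handles the $C_{m,r}$; this is the tower analogue of the Skolem--Mahler--Lech step in the paper's Theorem~\ref{thm:index}$(b)$.

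There is, however, a localized error in part $(b)$ that you should repair. The dichotomy you assert --- $P_\sigma$ is cofinite exactly when the multisets $\{u_i\}$ and $\{v_j\}$ coincide and $r=0$, and finite otherwise --- rests on $b_{m+1}/b_m\to\infty$, which holds only for $t\geq 2$. For $t=1$ the ratio is the constant $q$ and the dichotomy is false: with $q=2$, $u_1=u_2=0$, $v_1=1$, $r=0$, one has $b_m+b_m=2^{m+1}=b_{m+1}$ for every $m$, so $P_\sigma=\N$ although the multisets differ. The conclusion (finite or cofinite) still holds for $t=1$, but only via the separate argument you merely gesture at in your closing paragraph: dividing by $q^{m+\min(u_i,v_j)}$ turns the equation into $r=q^{m+w_0}c$ for a fixed integer $c$, which has no solutions, exactly one solution, or (when $c=r=0$) all solutions. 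Note also that your closing paragraph inverts which case is delicate: for $t\geq 2$ the divergence of consecutive ratios means the single largest term eventually dominates the entire opposite side plus $|r|$, so the crude estimate $(\alpha-\beta)b_{m+w}\leq(k+l)b_{m+w-1}+|r|$ settles the peeling step with no careful comparison of nested exponentials; it is $t=1$ that genuinely requires the different (though easy) argument. With the $t=1$ case of the $P_\sigma$ analysis written out, the proof is complete.
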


In this section, we sketch how to amend the proofs in Section \ref{sec:GSstab} to obtain the following result which, together with Remark \ref{rem:index}, generalizes Fact \ref{fact:inddPaSk}.

\begin{theorem}$~$\label{thm:index}
\begin{enumerate}[$(a)$]
\item Suppose $A\seq\N$ is monotonically enumerated $(a_n)_{n=0}^\infty$. If $\lim_{n\rightarrow\infty}\frac{a_{n+1}}{a_n}=\infty$ then $A^{\indd}_0$ is interdefinable with $\dot{\N}$.
\item Suppose $A=R(\abar;\cbar)$ for some $\abar,\cbar\in\Z^{d+1}$ as in Example \ref{ex:LHRR}. Then $A_0^{\indd}$ is an interpretable reduct of $\cN_{\mathfrak{s}}$ and $A^{\indd}$ is an interpretable reduct of $\cN^{\ap}_{\mathfrak{s}}$. 
\end{enumerate}
\end{theorem}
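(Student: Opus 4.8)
\textbf{Proof proposal for Theorem \ref{thm:index}.}

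The plan is to revisit the machinery of Section \ref{sec:GSstab} and track, in each of the two cases, a more restrictive class of definable sets than ``all subsets'', thereby obtaining an interpretation into a weaker structure than $\cN^1_{\mathfrak{s}}$. In both parts the decisive point is the combinatorial structure of the sets $P_\sigma\seq\N$ that appear in the proof of Proposition \ref{prop:unarysucc} (the ``slices'' $\{m:((m+u_i)_i,(m+v_j)_j)\in A(k,l,r)\}$): if these slices always lie in a fixed Boolean algebra $\mathcal{C}$ of subsets of $\N$ that is preserved under the inductive construction in Corollary \ref{cor:Hex}, then $A^{\indd}_0$ is an interpretable reduct of the corresponding expansion. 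For part $(a)$ the relevant $\mathcal{C}$ is the Boolean algebra of \emph{finite and cofinite} sets; for part $(b)$ it is the Boolean algebra generated by arithmetic progressions. So the skeleton is: (1) strengthen Proposition \ref{prop:unarysucc} to say $X_A$ is definable using only predicates from $\mathcal{C}$; (2) check that Lemma \ref{lem:gsD} still feeds into the induction of Corollary \ref{cor:Hex} without enlarging $\mathcal{C}$ (the set operations used there — finite unions over $\Sigma$, the shift maps $\mathfrak{s}^{u_i}$, and intersections — all preserve both candidate algebras); (3) conclude, and for the $A^{\indd}$ versus $A^{\indd}_0$ distinction in $(b)$, note that the extra unary predicates $A\cap(m\Z+r)$ of Corollary \ref{cor:monstH} become predicates for arithmetic progressions on the index set, which is exactly the passage from $\cN_{\mathfrak{s}}$ to $\cN^{\ap}_{\mathfrak{s}}$.

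For part $(a)$, the hypothesis $\lim \frac{a_{n+1}}{a_n}=\infty$ should force each slice $P_\sigma$ to be finite or cofinite for a much simpler reason than in the general geometrically sparse case: once $m$ is large, a relation $r+a_{m+u_1}+\ldots+a_{m+u_k}=a_{m+v_1}+\ldots+a_{m+v_l}$ among translates of $A$ is governed entirely by the single largest term (since the ratios $a_{n+1}/a_n$ eventually dwarf any bounded constant and any bounded number of smaller terms), so for $m\gg 0$ membership in $P_\sigma$ depends only on whether the multisets $\{u_i\}$ and $\{v_j\}$ agree after removing their common maximum and on the residual constant $r$ — a condition independent of $m$. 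Hence $P_\sigma$ is eventually constant, i.e. finite or cofinite. One then runs the Corollary \ref{cor:Hex} induction verbatim, observing that finite and cofinite subsets of $\N$ are exactly the sets definable in $\dot{\N}$, and that the construction never leaves this class; this yields $A^{\indd}_0$ interdefinable with $\dot{\N}$. (Here I would actually want to re-derive a clean analogue of Lemma \ref{lem:gsD} directly from the divergence hypothesis rather than quote the geometrically sparse version, since the bounds are so much cruder.)

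For part $(b)$, the key input is the Skolem--Mahler--Lech theorem: writing $a_n=\alpha\lambda^n+\sum_i\beta_i\mu_i^n$ and substituting $n\mapsto m+u_p$, $m+v_q$, the relation defining $P_\sigma$ becomes an exponential-polynomial equation in $m$, i.e. the vanishing of a single linear recurrence sequence $(c_m)_m$ in $m$; by Skolem--Mahler--Lech its zero set is a finite union of a finite set and full arithmetic progressions, so $P_\sigma$ lies in the Boolean algebra generated by arithmetic progressions. Again the Corollary \ref{cor:Hex} induction is stable under the operations involved (shifts of an arithmetic progression, and finite unions/intersections of such, stay in this algebra), so $A^{\indd}_0$ is an interpretable reduct of $\cN^{\ap}_{\mathfrak{s}}$; and since $A^{\indd}_0$ only ever used the successor structure plus these progression-predicates, and the successor structure is already present, it is an interpretable reduct of $\cN_{\mathfrak{s}}$ \emph{expanded by arithmetic-progression predicates on the index}, which — after absorbing the genuinely successor-free instances — one checks is the same as being an interpretable reduct of $\cN_{\mathfrak{s}}$ together with those predicates; the bookkeeping to separate ``$A^{\indd}_0$ is an interpretable reduct of $\cN_{\mathfrak{s}}$'' from ``$A^{\indd}$ is an interpretable reduct of $\cN^{\ap}_{\mathfrak{s}}$'' mirrors Fact \ref{fact:inddPaSk}$(b)$.

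The main obstacle I expect is not the Skolem--Mahler--Lech application itself but verifying that the $m_*$-uniformity in Lemma \ref{lem:gsD} and Lemma \ref{lem:AGl2} can be arranged to cohere with the algebraic description of the slices — i.e. that the ``small $m$'' exceptional set really does only contribute a \emph{finite} set (not an arbitrary set) to each $P_\sigma$, so that the output genuinely lands in the finite/cofinite algebra (part $(a)$) or the arithmetic-progression algebra (part $(b)$) rather than merely in ``arbitrary subsets''. This amounts to showing that below the threshold $m_*$ there are only finitely many $m$ to worry about for each fixed $\sigma$, which is automatic since $\sigma$ ranges over a finite set and each relevant bound $t_*$, $s$, $m_*$ is uniform in $m$; so the obstacle is really just careful bookkeeping, and I would present it as a refinement of the existing proofs in Section \ref{sec:agD} rather than a fresh argument.
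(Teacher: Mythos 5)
Your high-level strategy (control the slices $P_\sigma$, use domination by the largest term in $(a)$ and Skolem--Mahler--Lech in $(b)$) is the same as the paper's, but both parts as written have a gap at exactly the point where you must land in a structure \emph{weaker} than $\cN^1_{\mathfrak{s}}$.

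For part $(a)$: showing each $P_\sigma$ is finite or cofinite is not enough to conclude definability in $\dot{\N}$, because the formula produced by Proposition \ref{prop:unarysucc} is $x_1\in P_\sigma\wedge\bigwedge_i x_i=\mathfrak{s}^{u_i}(x_1)\wedge\ldots$, which uses the successor. If $P_\sigma$ is cofinite and some shift is nonzero --- e.g.\ $\sigma$ with $\ubar=(0,1)$, $\vbar=(1,0)$ gives $P_\sigma=\N$ and $X_\sigma=\{(m,m+1,m+1,m):m\in\N\}$ inside $A(2,2,0)$ --- then $X_\sigma$ is an infinite shifted diagonal, not definable in pure equality. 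So ``running Corollary \ref{cor:Hex} verbatim'' only lands you in $\cN_{\mathfrak{s}}$. The paper avoids this by proving a stronger decomposition than Lemma \ref{lem:gsD}: for $\max\{k,l\}>1$ there is $t$ such that every $(\mbar,\nbar)\in A(k,l,0)$ with $\max(\mbar,\nbar)>t$ (note: $\max$, not $\max-\min$) lies in $A(k,l,0,0)$, i.e.\ splits exactly (with $s'=0$) into two smaller homogeneous equations. The exceptional set is then genuinely finite, the recursion stays at $r=0$ throughout, and it bottoms out at the diagonal $A(1,1,0)$, which is equality. Your ``multisets agree after removing the common maximum'' observation is the right germ, but it must be packaged as this decomposition of tuples rather than as finiteness/cofiniteness of the slices.

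For part $(b)$: the generic Skolem--Mahler--Lech conclusion (zero set $=$ finite set $\cup$ finitely many full arithmetic progressions) only puts $P_\sigma$ in the AP-algebra, hence only shows $A^{\indd}_0$ is an interpretable reduct of $\cN^{\ap}_{\mathfrak{s}}$ --- but the theorem asserts the strictly stronger statement that $A^{\indd}_0$ is an interpretable reduct of $\cN_{\mathfrak{s}}$, and your ``absorbing the successor-free instances'' remark does not close that distance. The paper gets the stronger conclusion by observing that the auxiliary sequence $f(m)=\sum_t\gamma_t\mu_t^m$ cutting out $P_\sigma$ has the \emph{same characteristic polynomial} as $A$ (the minimal polynomial of a Pisot or Salem number), so it is weakly non-degenerate and SML in the form of \cite[Corollary 7.2]{SchLRS} gives ``identically zero or finitely many zeros''; thus $P_\sigma$ itself is finite or cofinite and the arithmetic progressions never appear at this stage. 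The AP predicates enter only in the passage from $A^{\indd}_0$ to $A^{\indd}$, where (as you correctly say) eventual periodicity of $A$ modulo each $n$ handles the congruence predicates $C_{m,r}$.
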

\begin{proof}
Part $(a)$. Without loss of generality, assume $0\not\in A$. 

\noit{Claim}: For any integers $k,l\geq 1$, with $\max\{k,l\}>1$, there is $t\geq 0$ such that, for any $(\mbar,\nbar)\in A(k,l,0)$ if $\max(\mbar,\nbar)>t$ then $(\mbar,\nbar)\in A(k,l,0,0)$. 

\noit{Proof}: Let $k_*=\max\{k,l\}$ and fix $t\geq 0$ such that if $n>t$ then $a_n>k_*a_{n-1}$. Fix $(\mbar,\nbar)\in A(k,l,0)$ such that $\max(\mbar,\nbar)>t$ and, without loss of generality, $m_1=\max\mbar\geq\max\nbar=n_1$. Then $(\mbar,\nbar)\in A(k,l,0)$ implies $a_{m_1}\leq k_*a_{n_1}$, and so $m_1=n_1$ since $m_1>t$. Let $I=[k]\backslash\{1\}$ and $J=[l]\backslash\{1\}$. Since $k_*>1$, $I$ and $J$ witness $(\mbar,\nbar)\in A(k,l,0,0)$ (see Definition \ref{def:Akl}).\claim

Now follow the proof of Corollary \ref{cor:Hex} to show that $A(k,l,0)$ is definable in $\dot{\N}$ for any $k,l\geq 0$. In the base case of the proof, replace the use of Proposition \ref{prop:unarysucc} by the observation that $A(1,1,0)=\{(n,n):n\in\N\}$ is definable in $\dot{\N}$. In the induction step, replace Lemma \ref{lem:gsD} by the above claim, and replace the use of  Proposition \ref{prop:unarysucc} by  the fact that, for any $k,l\geq 0$ and $t\geq 0$, if $X$ is the set of $(\mbar,\nbar)\in A(k,l,0)$ such that $\max(\mbar,\nbar)\leq t$, then $X$ is finite and thus definable in $\dot{\N}$.

Part $(b)$. First, after possibly replacing $A$ with $\nv A$ and removing a finite set, we may assume $A\seq\Z^+$ and $(a_n)_{n=0}^\infty$ is a monotonic enumeration of $A$. By the proof of Corollary \ref{cor:Hex}, in order to show $A^{\indd}_0$ is an interpretable reduct of $\cN_{\mathfrak{s}}$, it suffices to show that the conclusion of Proposition \ref{prop:unarysucc} holds with $\cN^1_{\mathfrak{s}}$ replaced by $\cN_{\mathfrak{s}}$. To do this, it suffices to show that the set $P_\sigma$, from the proof of Proposition \ref{prop:unarysucc}, is definable in $\dot{\N}$. Fix $\beta_0,\ldots,\beta_d,\mu_0,\ldots,\mu_d\in\C$ such that $a_n=\sum_{t=0}^d\beta_t\mu_t^n$ for all $n\in\N$. Let $\sigma=(\ubar,\vbar)$, and set $\gamma_t=\beta_t(\sum_{i=1}^k\mu_t^{u_i}-\sum_{j=1}^l\mu_t^{v_j})$. If $f\colon\N\to\C$ is such that $f(n)=\sum_{t=0}^d\gamma_t\mu_t^n$, then $P_\sigma$ is the zero-set of $f$. Moreover, $f(n)$ is a recurrence relation with the same characteristic polynomial as $A$. Therefore $f(n)$ either is identically zero or has finitely many zeroes by  the Skolem-Mahler-Lech Theorem \cite[Theorem 2.1]{EPSWbook} applied to \emph{weakly non-degenerate} recurrence relations (see \cite[Corollary 7.2]{SchLRS}). Thus $P_\sigma$ is definable in $\dot{\N}$, as desired. Finally, to conclude $A^{\indd}$ is an interpretable reduct of $\cN^{\ap}_{\mathfrak{s}}$, apply Proposition \ref{prop:unAP}, together with the fact that $A$ is eventually periodic modulo any fixed $n\geq 1$ (since it is given by a linear recurrence sequence).
\end{proof}

\begin{remark}\label{rem:index}$~$
 \begin{enumerate}[$(a)$]
  \item Lemma 4.6 of \cite{PaSk}, which proves the conclusion of Theorem \ref{thm:index}$(a)$ for the factorials, can be easily modified to give a different proof of Theorem \ref{thm:index}$(a)$.
  
 \item Suppose $A\seq\N$ is monotonically enumerated $(a_n)_{n=0}^\infty$, with $\lim_{n\rightarrow\infty}\frac{a_{n+1}}{a_n}=\infty$. Then, by Corollary \ref{cor:monstH}, $A^{\indd}$ is an interpretable reduct of an expansion of $\dot{\N}$ by unary predicates.

 \item If $A$ is the set of factorials then, since $A$ is eventually $0$ modulo any fixed integer $n\geq 1$, it follows that $A^{\indd}$ is a definitional expansion of $A^{\indd}_0$, which explains Fact \ref{fact:inddPaSk}$(a)$. Thus, we have the same result for any $A\seq\N$ such that $\lim_{n\rightarrow\infty}\frac{a_{n+1}}{a_n}=\infty$ and $(a_n)_{n=0}^\infty$ is eventually constant modulo any $n\geq 1$.
 
 \item If $A=\Pi(q_1,\ldots,q_t)$, with $t\geq 2$, then $A^{\indd}$ is an interpretable reduct of $\cN^{\ap}$ since $\lim_{n\rightarrow\infty}\frac{a_{n+1}}{a_n}=\infty$ and $A$ periodic modulo any $m\geq 1$. 
\end{enumerate}
\end{remark}

\section{Sparse consequences of stability}\label{sec:sstable}

The previous results have shown that $\Th(\cZ_A)$ is stable if $A$ is sparse enough. In this section, we study sparsity assumptions on $A$ which are necessary for stability of $\Th(\cZ_A)$. The first such result can be obtained by just considering the case that $\cZ_A$ does not define the ordering on $\Z$ (i.e. does not define the set $\N$).

\begin{definition}
A set $A\seq\N$ is \textbf{$\udelta$-sparse} if $\udelta(\Sigma_n(A))=0$ for all $n\geq 1$.
\end{definition}

\begin{remark}\label{rem:ENS}
By Fact \ref{fact:ENS}, a set $A\seq\N$ is $\udelta$-sparse if and only if, for all $n\geq 1$, $\Sigma_n(A)$ does not contain an infinite arithmetic progression.
\end{remark}

\begin{theorem}\label{thm:stabudelta}
Given $A\seq\N$, if $\cZ_A$ does not define $\N$ then $A$ is $\udelta$-sparse.
\end{theorem}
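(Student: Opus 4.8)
The plan is to prove the contrapositive: if $A$ is not $\udelta$-sparse, then $\cZ_A$ defines $\N$. So suppose there is some $n\geq 1$ with $\udelta(\Sigma_n(A))>0$. By Fact \ref{fact:ENS} (applied with the set $\Sigma_n(A)\seq\N$, which still has positive lower density), there is some $k\geq 1$ such that $\Sigma_k(\Sigma_n(A))=\Sigma_{kn}(A)$ contains an infinite arithmetic progression $m\N+r$, for some $m\geq 1$ and $0\leq r<m$. Set $N=kn$, so that $m\N+r\seq\Sigma_N(A)$, i.e. every sufficiently large element of the coset $m\Z+r$ that is nonnegative can be written as a sum of at most $N$ elements of $A$.

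The key step is to convert this ``eventual'' covering statement into an honest definition of $\N$. First I would observe that the formula
\[
\theta(x):=\exists y_1\ldots\exists y_N\Big[\bigwedge_{i=1}^N A(y_i)\wedge x\equiv_m r\wedge x=y_1+\ldots+y_N\Big]
\]
(where, as usual, fewer than $N$ summands is allowed by permitting repetitions, or one pads with a fixed element of $A$ if $0\in A$, or one takes a finite disjunction over $\bigcup_{j\le N} j(A)$) is an $\cL_A$-formula whose realizations in $\cZ$ form exactly $\Sigma_N(A)\cap(m\Z+r)$. By our choice, this set contains $m\N+r$ and is contained in $(m\Z+r)$; in particular, it contains all sufficiently large elements of $m\Z+r$ but, since $A\seq\N$, contains no negative element of $m\Z+r$ below $-\,\text{(something bounded)}$... more precisely $\Sigma_N(A)\seq\N$ entirely, so $\theta(\cZ)=m\N+r$ up to a finite symmetric difference with a subset of $\{x: 0\le x< \text{const}\}$. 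Thus $\theta(x)$ defines a set $D$ with $D\sim m\N+r$ and $D\seq\N$.

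From here, $\N$ is easily recovered: $\N$ is the set of $z\in\Z$ such that $z+d\in D$ for all but finitely many... no — better, I would use that $\N=\{z:\ \text{there exist $w_1,w_2\in D$ with }w_1-w_2\le z\text{ and ... }\}$. Cleaner: since $D\sim m\N+r$, the set $m\N = (D-r)$ shifted and ``eventually equal'' to $m\N$ is definable up to finite error, and $\N$ is definable from $m\N$ via $x\in\N\iff \exists y (y\in m\N \wedge y\le x+m\wedge x\le y)$, no ordering yet... The honest route: $m\N+r$ being definable up to a finite set, and $\Z$ being definable, so $(m\Z+r)\setminus(m\N+r)$ is $-m\N + (m-... )$, i.e. the ``other half''; so $m\N$ is definable, hence $\N=\bigcup_{j=0}^{m-1}(m\N+j)$ is definable once we note $m\N+j = (m\N) + j$, and finally the ordering $x\le y$ is definable by $y-x\in\N$. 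The one technical wrinkle — and the step I expect to be the main obstacle — is handling the finite discrepancy between $\Sigma_N(A)\cap(m\Z+r)$ and $m\N+r$: one must argue that adjoining or deleting finitely many integers to a definable set keeps it definable (immediate, since finite sets are definable in $\cZ$ using constants, which are $\emptyset$-definable as $0,0+0+\cdots$, or one works over parameters), and that a set $\sim m\N+r$ genuinely pins down $m\N+r$ exactly — this uses that $m\N+r$ is the unique set, up to finite symmetric difference, in its class that is ``downward closed-complement'' within $m\Z+r$, which can be phrased definably as ``$x\in D'\iff$ all but finitely many elements of $m\Z+r$ above $x$ lie in $D$,'' and ``all but finitely many'' is itself not first-order — so instead one should pick a single large enough $c\in m\N+r$ with $c+m\N+r\cdots$; concretely, choose $c$ so large that $c+(m\N)\seq D$, then $x\in\N \iff x\equiv 0\ (m)\ \&\ x+c+r\in D$ defines $m\N$ exactly, and the rest follows. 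Writing this final bookkeeping carefully is the bulk of the work; the density/arithmetic-progression input is entirely supplied by Fact \ref{fact:ENS}.
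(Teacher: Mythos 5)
Your proposal is correct and follows essentially the same route as the paper's proof: apply Fact \ref{fact:ENS} to place an infinite arithmetic progression inside a definable sumset $\Sigma_N(A)\seq\N$, then recover $\N$ from the formula ``$mz+(\text{offset})\in\Sigma_N(A)$'' after deleting a finite (hence parameter-definable) error set, the key point being that $\Sigma_N(A)\seq\N$ forces the error to be finite. The only blemishes are cosmetic: your final formula pins down $m\N$ only up to finitely many negative multiples of $m$ (not ``exactly''), and nonzero constants are not $\emptyset$-definable in $(\Z,+,0)$ --- but both points are moot since definability with parameters suffices and you already invoke the finite-correction step.
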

\begin{proof}
Since $\Sigma_n(A)$ is definable in $\cZ_A$ for all $n\geq 1$, it suffices (via Remark \ref{rem:ENS}) to show that if $A$ contains an infinite arithmetic progression then it is unstable. So assume $A$ contains $m\N+r$ for some $0\leq r<m$. Setting $X=\{z\in \Z:\lceil\frac{\nv r}{m}\rceil\leq z<0\}$, we have $z\in \N$ if and only if $mz+r\in \Sigma_n(A)$ and $z\not\in X$ (this uses the fact that $\Sigma_n(A)$ is bounded below by $0$). Therefore $\cZ_A$ defines $\N$.
\end{proof}

For ease of exposition, we make the following definition.

\begin{definition}
A set $A\seq\N$ is \textbf{stable} if $\Th(\cZ_A)$ is stable. 
\end{definition}

If $A\seq\N$ is stable then $\cZ_A$ does not define $\N$, and so $A$ is $\udelta$-sparse by Theorem \ref{thm:stabudelta}. With Remark \ref{rem:ENS} in mind, it is natural to ask about the behavior of finite arithmetic progressions in stable subsets of $\N$. 

\begin{definition}
A set $A\seq\N$ is \textbf{$\ap$-sparse} if, for all $n\geq 1$, $\Sigma_n(A)$ does not contain arbitrarily long finite arithmetic progressions.
\end{definition}

\begin{question}\label{ques:stabAP}
Suppose $A\seq\N$ is stable. Is $A$ $\ap$-sparse?
\end{question}

Szemer\'{e}di's Theorem \cite{Szem} implies that if $A\seq\N$ is $\ap$-sparse then, for all $n\geq 1$, $\Sigma_n(A)$ has upper Banach density zero. Therefore, a strategy toward a negative answer to Question \ref{ques:stabAP} would be to find an unstable $A\seq\N$ with positive upper Banach density. Theorem \ref{thm:stabBD} below, which is due to Goldbring, shows this is not possible (the statement and proof are included with his permission).

\begin{definition}\label{def:BD}
Fix $A\seq\N$.
\begin{enumerate}
\item The \textbf{upper Banach density of $A$} is 
\[
\delta^b(A)=\limsup_{n-m\to\infty}\frac{|A\cap [m+1,n]|}{n-m}.
\]
\item $A$ is \textbf{$\delta^b$-sparse} if $\delta^b(\Sigma_n(A))=0$ for all $n\geq 1$.
\end{enumerate}
\end{definition}

\begin{theorem}\label{thm:stabBD}
Given $A\seq\N$, if $A$ is stable then $A$ is $\delta^b$-sparse.
\end{theorem}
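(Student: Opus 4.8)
The plan is to argue by contraposition: I will show that if $\Sigma_n(A)$ has positive upper Banach density for some $n\geq 1$, then $\Th(\cZ_A)$ is unstable. Fix such an $n$ and set $S=\Sigma_n(A)$, so $\delta^b(S)>0$. The key tool is the result of Jin \cite{JinBD}, which states that whenever $S,T\seq\N$ both have positive upper Banach density, the sumset $S+T$ is \emph{piecewise syndetic}; applying this with $S=T$, the set $S+S=\Sigma_{2n}(A)$ is piecewise syndetic. Recall that a set $X\seq\N$ is piecewise syndetic if there is a bound $d$ such that $X$ meets every interval of length $d$ within arbitrarily long intervals; equivalently, $X+[0,d]$ contains arbitrarily long intervals of $\N$. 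The point is that $X:=\Sigma_{2n}(A)+[0,d]$ is a union of (at most $d+1$) translates of sets definable in $\cZ_A$, hence $X$ itself is definable in $\cZ_A$, and $X$ contains arbitrarily long finite arithmetic progressions of step $1$ — in fact arbitrarily long intervals of consecutive integers.

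Next I would upgrade ``$X$ contains arbitrarily long intervals of $\N$ and is bounded below'' to ``$\cZ_A$ has the order property.'' The standard way to see instability here is to exhibit a formula with the independence property, or directly the order property, using the intervals inside $X$. Concretely: since $X$ is definable, bounded below (by $0$), and contains arbitrarily long initial-type intervals, one can find in a suitable elementary extension $\cZ_A^*$ an element $N$ such that $[0,N]\cap X^* $ is ``infinite on both ends in the sense of $*$'' — more carefully, one picks intervals $I_j\seq X$ with $|I_j|\to\infty$, translates them to start near $0$, and takes an ultralimit/compactness argument to produce an infinite definable interval $[0,b]\seq X^*$ with $b$ nonstandard. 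The formula $\varphi(x,y):= x+y\in X \wedge \neg(x\in X_{\text{bad}})$ (where $X_{\text{bad}}$ is a fixed finite correction set handling the lower boundary, as in the proof of Theorem \ref{thm:stabudelta}) then defines, for appropriate parameters, a linear order: the relation $x\leq y$ restricted to an interval. This is essentially the same mechanism as in Theorem \ref{thm:stabudelta}, where an \emph{infinite} arithmetic progression inside $\Sigma_n(A)$ let one define all of $\N$; here we only get arbitrarily long \emph{finite} progressions, which is not enough to define $\N$ outright (consistent with Question \ref{ques:stabAP} being open), but by compactness it is enough to define an infinite linear order in a saturated model, which is exactly the order property and hence instability.

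So the skeleton is: (1) assume $\delta^b(\Sigma_n(A))>0$; (2) apply Jin's theorem to get $\Sigma_{2n}(A)$ piecewise syndetic; (3) unwind piecewise syndeticity to get a $\cZ_A$-definable set $X$ containing arbitrarily long intervals $[c_j, c_j+\ell_j]$ with $\ell_j\to\infty$; (4) by compactness, pass to a monster model $\cZ_A^*$ and find a definable set containing an interval $[0,b]$ with $b$ infinite (after translating so the left endpoints are uniformly bounded, or by working with the formula $x+y\in X$ and quantifying over the starting point); (5) use this infinite definable interval, together with the group structure, to define a linear order of infinite size on a definable set, witnessing the order property; (6) conclude $\Th(\cZ_A)$ is unstable.

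The main obstacle I anticipate is step (3)–(5): making precise the passage from ``piecewise syndetic'' to ``contains an honest infinite interval in an elementary extension'' and then extracting the order property cleanly. The subtlety is that piecewise syndeticity gives intervals with \emph{unbounded but moving} left endpoints $c_j$, so one cannot simply take $[0, b]$ without first translating; and translation by $c_j$ is harmless since $\cZ_A$ has $+$, but one must arrange the translated sets to be uniformly definable (by a single formula with one parameter ranging over the $c_j$), which is where the definition $x+y\in X$ with $y$ the translation parameter comes in. Once an infinite definable interval $[0,b]$ is in hand, defining the order is routine: $u \preceq v$ iff $v - u \in [0,b]$, which is a definable linear (pre)order on $[0,b]$ of infinite length, giving the order property. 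I would also double-check that the finite ``boundary correction'' needed because $\Sigma_{2n}(A)\seq\N$ (so its translates by $[0,d]$ behave slightly irregularly near $0$) is genuinely finite and hence absorbed into parameters, exactly as the set $X$ was handled in the proof of Theorem \ref{thm:stabudelta}.
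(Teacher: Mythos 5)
Your reduction via Jin's theorem is essentially the paper's: from $\delta^b(\Sigma_n(A))>0$ you obtain a $\cZ_A$-definable set $B\seq\N$ containing arbitrarily long intervals (the paper absorbs the gap-filling into a further sumset after assuming $1\in A$, rather than adding $[0,d]$ externally, but that difference is cosmetic). The genuine gap is in your steps (4)--(5), where you pass from ``$B$ contains arbitrarily long intervals'' to the order property. Compactness gives a nonstandard $c^*$ with $c^*+i\in B^*$ for every standard $i$; it does \emph{not} give a \emph{definable} infinite interval. The internal interval sitting inside $B^*$ is not definable in $\cZ_A^*$ (your descriptions of it already quietly use the ordering, which is not available), and $B^*$ itself is not an interval. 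Consequently $u\preceq v:\Leftrightarrow v-u\in B$ is not a linear order on any definable set: totality would require $B$ to contain an interval of the form $[0,L]$, whereas you only know it contains some $[c,c+\ell]$ where $c$ may grow far faster than $\ell$; and transitivity would require $B+B\seq B$ on the relevant range, which you do not know. The same obstruction blocks a direct verification of the order property for $\varphi(x,y):=y-x\in B$: to arrange $b_j-a_i\notin B$ for $i>j$ you must exhibit specific non-elements of $B$ at positive locations adjacent to the long intervals, and piecewise syndeticity gives no control over where $B$ fails to meet $\N$.

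The paper closes exactly this gap with stable group theory instead of a direct construction: in a stable group, every definable set or its complement is generic, i.e.\ finitely many translates of one of them cover the group. Here $B\seq\N$, so finitely many translates of $B$ cannot cover $\Z$; and since $B$ contains arbitrarily long intervals, finitely many translates of $\Z\setminus B$ cannot cover $\Z$ either (a point covered by every translate $t_i+(\Z\setminus B)$ would have to avoid $B-t_i$ for all $i$, impossible once $B$ contains an interval longer than the spread of the $t_i$). Hence $\Th(\cZ_A)$ is unstable, by \cite[Lemma 5.1]{PoStG}. Your steps (1)--(3) can stand, but (4)--(6) should be replaced by this genericity dichotomy; as written, the instability half of your argument is missing its key idea.
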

\begin{proof}[Proof \textnormal{(Goldbring)}] As in Theorem \ref{thm:stabudelta}, it suffices to show that if $A\seq\N$ is stable then $\delta^b(A)=0$. So fix $A\seq\N$ and assume $\delta^b(A)>0$. Without loss of generality, we may assume $1\in A$. Thus, by a result of Jin \cite[Theorem 1]{JinBD}, there is some $n\geq 1$ such that if $B=\Sigma_n(A)$ then $\delta^b(B)=1$ (i.e. $B$ contains arbitrarily large intervals). In particular, $\Z$ cannot be covered by finitely many translates of $\Z\backslash B$. Since $B\seq\N$, $\Z$ also cannot be covered by finitely many translates of $B$. Since $B$ is definable in $\cZ_A$, it follows that $\Th(\cZ_A)$ is unstable (in particular, $x-y\in B$ has the order property; see \cite[Lemma 5.1]{PoStG}). 
\end{proof}

It is also worth noting that, since $\udelta(A)\leq\delta^b(A)$ for any $A\seq\N$, Theorem \ref{thm:stabBD} strengthens (quite significantly) the conclusion of Theorem \ref{thm:stabudelta} applied to stable sets. Once again, a positive answer to Question \ref{ques:stabAP} would strengthen this even further, since $\ap$-sparse subsets of $\N$ are $\delta^b$-sparse by Szemer\'{e}di's Theorem. While we have been unable to answer Question \ref{ques:stabAP}, we can obtain some partial results on the behavior of finite arithmetic progressions in stable subsets of $\N$.

\begin{definition}
Given $A\seq\Z$, an arithmetic progression $(a+id)_{i=0}^{n-1}\seq A$ is \textbf{strongly contained in $A$} if $a-id\not\in A$ for all $1\leq i\leq n-1$.
\end{definition}

\begin{proposition}\label{prop:strcon}
Fix $A\seq\Z$ and suppose the formula $y-x\in A$ is stable in $\Th(\cZ_A)$. Then $A$ does not strongly contain arbitrarily long arithmetic progressions.
\end{proposition}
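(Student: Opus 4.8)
The plan is to prove the contrapositive: assuming that $A$ strongly contains arbitrarily long arithmetic progressions, I will show that the formula $\varphi(x,y):=(y-x\in A)$ has the order property in $\Th(\cZ_A)$, and hence is unstable. The core observation is that a single strongly contained arithmetic progression of length $n$ already yields, using parameters from $\Z$ itself, a half-graph of size $n$ for $\varphi$; since $n$ is arbitrary, compactness upgrades this to an infinite half-graph in a model of $\Th(\cZ_A)$.

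In detail, first I would fix $n\geq 1$ and use the hypothesis to choose $a,d\in\Z$ with $\{a+id:0\leq i\leq n-1\}\seq A$ and $a-id\notin A$ for all $1\leq i\leq n-1$. Next I would set, for $0\leq i,j\leq n-1$,
\[
b_j:=jd\mand c_i:=a+id,
\]
so that $c_i-b_j=a+(i-j)d$. The third step is the verification that $\cZ_A\models\varphi(b_j,c_i)$ if and only if $j\leq i$: if $j\leq i$ then $i-j\in\{0,\ldots,n-1\}$, so $c_i-b_j$ is one of the listed terms of the progression and lies in $A$; if $j>i$ then $c_i-b_j=a-(j-i)d$ with $1\leq j-i\leq n-1$, which lies outside $A$ precisely by the strong-containment hypothesis. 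This exhibits a half-graph of length $n$ for $\varphi$.

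Finally, since such a configuration exists for every $n$, the standard compactness argument in the order-property characterization of stability (cf.\ \cite[Lemma 5.1]{PoStG} and \cite{Pobook}) produces sequences $(b_j)_{j<\omega}$ and $(c_i)_{i<\omega}$ in a model of $\Th(\cZ_A)$ with $\varphi(b_j,c_i)\leftrightarrow j\leq i$, so $\varphi(x,y)$ is unstable, contrary to hypothesis. I do not expect a genuine obstacle here; the one place that uses the full strength of ``strongly contained'' rather than merely ``contained'' is the case $j>i$, where without the assumption $a-id\notin A$ the set $A$ could accidentally contain the relevant negative-offset terms and the half-graph would degenerate.
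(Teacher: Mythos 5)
Your proof is correct and is essentially the same as the paper's: both exhibit a half-graph for $y-x\in A$ of arbitrary finite length by taking $b_j$ along the multiples of the common difference $d$ and $c_i$ along the progression itself, with the case $j>i$ handled exactly by the strong-containment hypothesis. The paper leaves the final compactness step implicit, which you spell out; otherwise the arguments coincide up to an index shift.
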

\begin{proof}
Suppose $A$ strongly contains arbitrarily long arithmetic progressions. Fix an integer $k>0$. We find $(b_i,c_i)_{1\leq i\leq k}$ in $\Z$ such that $c_j-b_i\in A$ if and only if $i\leq j$. Let $(a+id)_{i=0}^{k-1}$ be an arithmetic progression strongly contained in $A$. Given $i\in\Z$, set $b_i=(i-1)d$. For $1\leq i\leq k$, define $c_i=a+b_i$. Given $1\leq i,j\leq k$, we have $c_j-b_i\in A$ if and only if $i\leq j$. 
\end{proof}

\begin{definition}
A set $A\seq\N$ is \textbf{$\ap_*$-sparse} if, for all $n\geq 1$, $\Sigma_n(A)$ does not strongly contain arbitrarily long arithmetic progressions.
\end{definition}

From Proposition \ref{prop:strcon}, we immediately obtain the following conclusion.

\begin{theorem}\label{thm:stabap*}
If $A\seq\N$ is stable then $A$ is $\ap_*$-sparse.
\end{theorem}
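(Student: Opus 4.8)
The plan is to derive Theorem \ref{thm:stabap*} from Proposition \ref{prop:strcon}, in the same spirit as the derivation of Theorem \ref{thm:stabudelta} from its combinatorial input. Fix a stable set $A\seq\N$ and an integer $n\geq 1$; since $n$ is arbitrary, it suffices to show that $\Sigma_n(A)$ does not strongly contain arbitrarily long arithmetic progressions. I would do this by first showing that $\Sigma_n(A)$ is itself a stable subset of $\N$, and then invoking Proposition \ref{prop:strcon} with the distinguished set taken to be $\Sigma_n(A)$ in place of $A$.

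For the first step, note that $\Sigma_n(A)\seq\N$ and that $\Sigma_n(A)$ is definable in $\cZ_A$ over $\emptyset$, being defined by the $\cL_A$-formula $\bigvee_{k=1}^{n}\exists y_1\cdots\exists y_k\big(\bigwedge_{i=1}^{k}A(y_i)\wedge x=y_1+\cdots+y_k\big)$. Hence $\Th(\cZ_{\Sigma_n(A)})$ is interpretable in $\Th(\cZ_A)$ (via the identity on the home sort, replacing the predicate by its defining formula), and since a theory interpretable in a stable theory is stable, $\Th(\cZ_{\Sigma_n(A)})$ is stable. In other words, $\Sigma_n(A)$ is a stable subset of $\N$, so in particular the formula $y-x\in\Sigma_n(A)$ is stable in $\Th(\cZ_{\Sigma_n(A)})$.

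Now Proposition \ref{prop:strcon}, applied with $\Sigma_n(A)$ in place of $A$, yields that $\Sigma_n(A)$ does not strongly contain arbitrarily long arithmetic progressions; letting $n$ range over all positive integers shows $A$ is $\ap_*$-sparse. There is no genuinely difficult step here: the theorem is essentially a formal consequence of Proposition \ref{prop:strcon}. The only point that warrants care is that Proposition \ref{prop:strcon} is phrased for the named predicate $A$ and the formula $y-x\in A$, so one must either pass through the reduct $\cZ_{\Sigma_n(A)}$ as above, or instead observe that the proof of Proposition \ref{prop:strcon} uses only that $y-x\in\Sigma_n(A)$ defines a set strongly containing arbitrarily long arithmetic progressions (and the witnessing tuples $b_i,c_i$ already lie in $\Z$), and so applies unchanged to any $\cZ_A$-definable set in place of $A$.
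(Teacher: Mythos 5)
Your proposal is correct and matches the paper's derivation: the paper obtains Theorem \ref{thm:stabap*} immediately from Proposition \ref{prop:strcon}, precisely because each $\Sigma_n(A)$ is $\emptyset$-definable in $\cZ_A$ and every formula is stable in a stable theory. Your spelled-out justification (via the definitional reduct $\cZ_{\Sigma_n(A)}$, or by noting the proof of Proposition \ref{prop:strcon} works for any definable set) just fills in what the paper leaves implicit.
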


It is not immediately obvious that $\ap_*$-sparse sets are actually sparse in any reasonable way. For example, $\Z$ does not strongly contain arbitrarily long arithmetic progressions. The next remark gives some justification for our terminology and, once again, highlights the importance of restricting to subsets of $\Z$ which are either bounded above or bounded below.

\begin{remark}\label{rem:dimsparse}
Fix a set $A\seq\N$, and suppose $(a+id)_{i=0}^{n-1}$ is an arithmetic progression in $A$. If $a\leq d$ then $(a+id)_{i=0}^{n-1}$ is strongly contained in $A$, since $a-id<0$ for all $i\geq 1$. Any arithmetic progression of the form $(a+id)_{i=0}^{n-1}$ contains a sub-progression $(a+id_*)_{i=0}^{k-1}$ with $a\leq d_*$ and $k\geq n/\lceil\frac{a}{d}\rceil$. Therefore, if $A$ does not strongly contain arbitrarily long arithmetic progressions, then there is a uniform finite bound on the value of $n/\lceil\frac{a}{d}\rceil$ for any arithmetic progression $(a+id)_{i=0}^{n-1}$ appearing in $A$. It follows (via Remark \ref{rem:ENS}) that $\ap_*$-sparse subsets of $\N$ are $\udelta$-sparse.
\end{remark}

Figure \ref{figure} collects the properties of subsets of $\N$ that have been defined, along with the known relationships between them.

\begin{figure}[H]
\begin{displaymath}
\xymatrix{
 & \text{$\udelta$-sparse}& \\
 \text{$\ap_*$-sparse}\ar[ur]  &   \text{$\delta^b$-sparse}\ar[u]  & \\
 \text{$\ap$-sparse} \ar[u]\ar[ur] &  \text{stable}\ar[u]\ar[ul] &  \text{\begin{tabular}{c} sufficiently \\ sparse\end{tabular}}\ar[ul]\\
  & \text{\begin{tabular}{c} geometrically \\ sparse\end{tabular}}\ar[ur] \ar[u] &
}
\end{displaymath}
\caption{Known implications between properties of subsets of $\N$.}
\label{figure}
\end{figure}
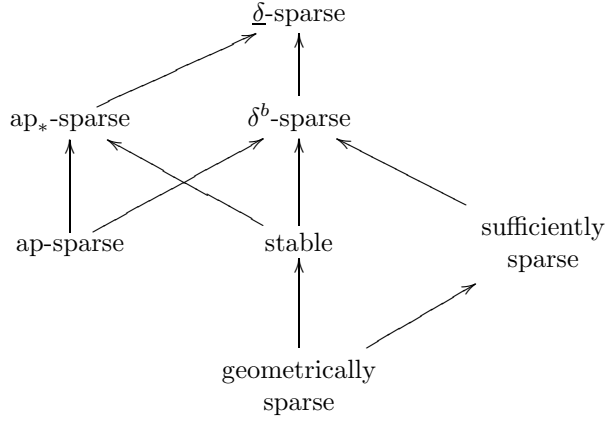

All of these implications are either trivial or proved explicitly above, except for ``sufficiently sparse implies $\delta^b$-sparse". Since this is not directly relevant to our results, we only sketch the argument and leave the details to the reader. Suppose $A\seq\N$ is not $\delta^b$-sparse. By \cite[Theorem 1]{JinBD}, there are $m,n\geq 1$ such that $mI\seq \Sigma_n(A)$ for arbitrarily large intervals $I\seq\N$.  Therefore $\Sigma_{2n}(\pm A)$ contains $m\Z$, and so $A$ is not sufficiently sparse. (Note that this also shows ``$\ap$-sparse implies $\delta^b$-sparse" without using Szemer\'{e}di's Theorem.)

We end this section with a few miscellaneous remarks and questions.

\subsection{Limits of ratios and additive bases}

Given a geometrically sparse set $A\seq\N$, a key ingredient in the proof of stability of $\Th(\cZ_A)$ was the associated sequence $(\lambda_m)_{m=0}^\infty$ such that $\inf_{m\geq0}\frac{\lambda_{m+1}}{\lambda_m}>1$.

\begin{definition}
Given an infinite set $A\seq\N$, define 
\[
\minf(A)=\liminf_{n\rightarrow\infty}\frac{a_{n+1}}{a_n}\mand\msup(A)=\limsup_{n\rightarrow\infty}\frac{a_{n+1}}{a_n},
\]
where $(a_n)_{n=0}^\infty$ is a monotonic enumeration of $A$. If $\minf(A)=\msup(A)$, then we let $\mfrak(A)=\lim_{n\rightarrow\infty}\frac{a_{n+1}}{a_n}$ denote this common value.
\end{definition}

Note that, given $A\seq\N$, $\minf(A)=\infty$ if and only if $\mfrak(A)=\infty$, and $\msup(A)=1$ if and only if $\mfrak(A)=1$. We have shown that if $\mfrak(A)=\infty$ then $A$ is stable (via Proposition \ref{prop:esgs}). Moreover, for any real number $r>1$, there is a stable set $A\seq\N$ such that $\mfrak(A)=r$ (take $a_n$ to be the integer part of $r^n$ as in Example \ref{ex:stratio}).

\begin{question}\label{ques:multbig}
Is there an unstable set $A\seq\N$ with $\minf(A)>1$? Is there a stable set $A\seq\N$ with $\mfrak(A)=1$?\footnote{In a later paper, we show that both of these questions have a positive answer. See \cite{CoGG}.}
\end{question}

 It is of course easy to find unstable sets $A\seq\N$ such that $\mfrak(A)=1$ (e.g. $A=\N$).  The inquiry into such sets leads to classical results around \emph{additive bases}, which are sets $A\seq\N$ such that $\Sigma_n(A)=\N$ for some $n\geq 1$ (see \cite{NathB1}). We broaden this definition as follows. 

\begin{definition}
A set $A\seq\N$ is a \textbf{relative additive asymptotic base} (RAAB) if there is some $n\geq 1$ such that $\Sigma_n(A)$ is cofinite in $a\N$, where $a=\gcd(A)$. 
\end{definition}

The following observation follows from the proof of Theorem \ref{thm:stabudelta}.

\begin{corollary}\label{cor:RAAB}
If $A\seq\N$ is a RAAB then $\cZ_A$ defines $\N$. 
\end{corollary}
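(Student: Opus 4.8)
The final statement to prove is Corollary \ref{cor:RAAB}: if $A\seq\N$ is a RAAB then $\cZ_A$ defines $\N$.

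The plan is to mimic the proof of Theorem \ref{thm:stabudelta} essentially verbatim. By definition of RAAB, there is $n\geq 1$ such that $\Sigma_n(A)$ is cofinite in $a\N$, where $a=\gcd(A)$. Write $B=\Sigma_n(A)$; then $B$ is definable in $\cZ_A$, and $B\seq\N$, so in particular $B$ is bounded below by $0$. Since $B$ is cofinite in $a\N$, there is a finite set $E\seq a\N$ such that $a\N=B\cup E$, i.e. for every $z\in\Z$, $az\in B$ if and only if $az\notin E$. The key point is that membership in $B$ detects nonnegativity of $z$ up to the correction coming from $E$ together with the bounded-below-ness of $B$.

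Concretely, I would argue as follows. For $z\in\Z$ with $z\geq 0$, we have $az\in a\N$, so $az\in B$ unless $az\in E$. For $z<0$, we have $az<0$, hence $az\notin B$ since $B\seq\N$. Let $X=\{z\in\Z : az\in E\}$, a finite (hence $\cZ$-definable) set. Then for $z\in\Z$, $z\in\N$ if and only if either $az\in B$, or $z\in X$ and $z\geq -(\max X)$ — but to avoid circularity I should phrase it purely in terms of $B$ and finite sets: $z\in\N$ iff $az\in B$ and $z\notin X^-$, or $z\in X^+$, where $X^-=\{z<0:az\in E\}$ and $X^+=\{z\geq 0:az\in E\}$ are both finite. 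More cleanly: $\N=\{z\in\Z: az\in B\}\cup X^+$, and $X^+$ is finite, while $\{z : az\in B\}\subseteq\N$ automatically since $B\subseteq\N$ and $a>0$. Thus $\N$ is a finite union of $\cZ_A$-definable sets, so $\cZ_A$ defines $\N$, and therefore defines the ordering $x\leq y \iff y-x\in\N$.

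The only subtlety — and it is minor — is handling the degenerate case $a=0$, which occurs only when $A=\{0\}$ or $A=\emptyset$; in that case $A$ being a RAAB is vacuous/trivial and we may either exclude it or note $\gcd(A)$ is taken with the convention that makes $A$ infinite, so $a\geq 1$. Assuming $a\geq 1$, there is no real obstacle here: the argument is a direct transcription of the proof of Theorem \ref{thm:stabudelta}, with ``contains $m\N+r$'' replaced by ``is cofinite in $a\N$'', and the only added ingredient is absorbing the finite cofiniteness defect $E$ into a finite definable correction term. I would write:

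\begin{proof}
We may assume $A$ is infinite (else the statement is vacuous), so $a:=\gcd(A)\geq 1$. Let $n\geq 1$ be such that $B:=\Sigma_n(A)$ is cofinite in $a\N$, and let $E=a\N\backslash B$, which is finite. Note that $B$ is definable in $\cZ_A$, $B\seq\N$, and $a>0$. Set $X=\{z\in\Z:az\in E\}$, which is finite and hence $\cZ$-definable, and let $X^+=\{z\in X:z\geq\nv z\}=\{z\in X:z\geq 0\}$, which is $\cZ$-definable since $X$ is finite. For any $z\in\Z$, if $z\geq 0$ then $az\in a\N$, so $az\in B$ or $z\in X^+$; conversely, if $az\in B$ then $az\in\N$, so $z\geq 0$, and clearly $X^+\seq\N$. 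Therefore
\[
\N=\{z\in\Z:az\in B\}\cup X^+,
\]
which is definable in $\cZ_A$. Hence $\cZ_A$ defines $\N$.
\end{proof}

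That is the whole proof; there is no hard step, only the bookkeeping of the finite correction set.
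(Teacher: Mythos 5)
Your proof is correct and is essentially the paper's argument: the paper obtains the corollary directly from the proof of Theorem \ref{thm:stabudelta} (a set cofinite in $a\N$ contains an infinite arithmetic progression, and membership in the definable, bounded-below sumset $\Sigma_n(A)$ then detects $\N$ up to a finite correction), which is exactly the bookkeeping you carry out explicitly with your set $X^+$.
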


In fact, by the result of Nash and Nathanson \cite{NaNa} mentioned in Remark \ref{rem:NaNa}, $A\seq\N$ is a RAAB if and only if $A$ is not $\udelta$-sparse. Therefore Corollary \ref{cor:RAAB} is precisely the contrapositive of Theorem \ref{thm:stabudelta}. By definition, if $A$ is a RAAB then $\mfrak(\Sigma_n(A))=1$ for some $n>0$. In 1770, Waring conjectured that for any $k\geq 1$, the set $P_k=\{n^k:n\in\N\}$ is an additive base. This is trivial for $k=1$, and a famous result of Lagrange from 1770 is that $\Sigma_4(P_2)=\N$. In 1909, Hilbert \cite{HilbertWaring} proved Waring's conjecture for all $k$. The fact that $P_k$ is a RAAB for all $k\geq 1$ is generalized by the following result.

\begin{fact}[Kamke \cite{Kamke} 1931]
Let $f(x)$ be a non-constant polynomial such that $A_f:=\{f(n):n\in\N\}\seq\N$. Then $A_f$ is a RAAB.
\end{fact}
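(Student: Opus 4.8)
The plan is to deduce the statement from the combinatorial characterization recorded just above the theorem, namely that $A\seq\N$ is a RAAB if and only if it is not $\udelta$-sparse (the result of Nash and Nathanson from Remark \ref{rem:NaNa}). By Remark \ref{rem:ENS}, $A_f$ fails to be $\udelta$-sparse as soon as some finitary sumset $\Sigma_n(A_f)$ contains an infinite arithmetic progression; and once $\Sigma_n(A_f)\supseteq m\N+r$ (even merely a tail of it), Fact \ref{fact:sym} gives $\udelta(\Sigma_n(A_f))\geq\frac1m>0$. So the whole task reduces to producing a single $n\geq1$ for which $\Sigma_n(A_f)$ contains an infinite arithmetic progression.

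First I would dispose of the trivial structure. Since $f$ is non-constant and $f(\N)\seq\N$, its leading coefficient is positive, so $f$ is eventually increasing; writing $f$ in the binomial basis $f(x)=\sum_{j=0}^{k}a_j\binom{x}{j}$ with $a_j=(\Delta^j f)(0)\in\Z$ and $k=\deg f$, we have $a_k>0$ and $k!\,f\in\Z[x]$. If $k=1$ then $A_f=a_1\N+a_0$ is itself an infinite arithmetic progression and we take $n=1$. So assume $k\geq2$.

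The heart of the argument is then the Waring-type theorem for the polynomial $f$, which I would invoke through the Hardy--Littlewood circle method. For $n$ large in terms of $k$, the number of representations $N=f(x_1)+\cdots+f(x_n)$ with $x_i\in\N$ satisfies an asymptotic of the form $\mathfrak{S}_n(N)\,J_n(N)+o(N^{n/k-1})$, where the singular integral $J_n(N)$ is of order $N^{n/k-1}$ and the singular series $\mathfrak{S}_n(N)=\prod_p\chi_p(N)$ is bounded below by a positive constant whenever $N$ lies in a certain fixed set $\mathcal{P}$ of locally admissible integers. Because all local obstructions are concentrated at finitely many primes once $n$ is large, $\mathcal{P}$ is a nonempty union of residue classes to a fixed modulus, hence contains an infinite arithmetic progression. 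Consequently every sufficiently large $N\in\mathcal{P}$ is a sum of $n$ values of $f$, so $\Sigma_n(A_f)$ contains a tail of $\mathcal{P}$, and therefore an infinite arithmetic progression. Combined with the first paragraph, this shows $A_f$ is not $\udelta$-sparse and so is a RAAB.

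The main obstacle is exactly this last input: the minor-arc estimate (Weyl's inequality for the polynomial exponential sum $\sum_{x\leq P}e(\alpha f(x))$) together with the positivity of the singular series. There is no purely density-theoretic shortcut here, since a generic subset of $[1,N]$ of size $\asymp N^{1/k}$ can have every finitary sumset of density zero; one genuinely must exploit that $A_f$ is the value set of a polynomial. An alternative would be to reduce to Hilbert's solution of Waring's problem for the monomials $x^k$ (cited in the introduction), but that reduction is itself nontrivial: the shifted values $f(x),f(x+1),\dots,f(x+k)$ form only a $\Q$-basis of the polynomials of degree $\leq k$, so $x^k$ is a \emph{signed} $\Q$-combination of them and the naive transfer of representations fails --- one would instead need the nonnegative-coefficient polynomial identities underlying the elementary proof of the Hilbert--Waring theorem.
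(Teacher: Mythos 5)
The paper does not actually prove this statement: it is quoted as a \emph{Fact} with a citation to Kamke, so there is no internal argument to compare yours against. Judged on its own terms, your reduction is sound and well aligned with how the paper frames things: by the Nash--Nathanson equivalence stated just before the Fact, $A_f$ is a RAAB iff it is not $\udelta$-sparse, and by Remark \ref{rem:ENS} together with Fact \ref{fact:sym} this follows once a single $\Sigma_n(A_f)$ contains (a tail of) an infinite arithmetic progression. Your degree-one case is also fine. You are further right that there is no density-theoretic shortcut and that the naive reduction to Hilbert--Waring via the binomial basis fails because of signs; identifying that obstruction is a genuine point in your favor.

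The issue is that the remaining step --- some $\Sigma_n(A_f)$ contains an infinite arithmetic progression --- \emph{is} Kamke's theorem, essentially in its entirety, and your treatment of it is a black-box appeal to the circle method rather than a proof. The assertions that the singular series is uniformly positive on a set of locally admissible $N$, that local obstructions are confined to finitely many primes once $n$ is large, and that $\mathcal{P}$ is a union of residue classes to a fixed modulus all require arguments (Hensel lifting from nonsingular solutions mod $p$, Weyl's inequality with enough variables to beat the minor arcs), none of which are supplied. You flag this honestly, but it means the proposal is a correct \emph{reduction} plus a citation of the hard analytic core, not a self-contained proof. Since the paper itself only cites Kamke, this is arguably the appropriate level of detail for the context; but if a proof were demanded, the gap sits exactly where you placed it, and closing it would require either carrying out the Hardy--Littlewood analysis for general integer-valued polynomials or reproducing Kamke's elementary adaptation of Hilbert's identities.
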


An example of a RAAB, which is not covered by Kamke's result, is the set $P$ of primes (the existence of such an $n>0$ such that $\Sigma_n(P)=\N$ was first established by Schnirel'mann \cite{Schn} in 1933).

\subsection{Other notions of model theoretic tameness}

We currently have no concrete example of an unstable set $A\seq\N$ such that $\cZ_A$ does not define the ordering on $\Z$ (although Theorems \ref{thm:stabBD} and \ref{thm:stabap*} generate possible candidates). So it is natural to ask if stability of $\Th(\cZ_A)$ can fail in more subtle ways. For instance:

\begin{question}
 Is there an unstable subset $A\seq\N$ such that $\Th(\cZ_A)$ is simple?
\end{question}

Again, the restriction to subsets of $\N$ is important here. For example, using results of Chatzidakis and Pillay \cite{ChzPi} on ``generic" predicates, one can prove the existence of $A\seq\Z$ such that $\Th(\cZ_A)$ is supersimple (of $SU$-rank $1$) and unstable. However, such sets given by \cite{ChzPi} are necessarily unbounded above and below. The following is another result in this vein by Kaplan and Shelah.

\begin{fact}\textnormal{\cite{KaSh}}\label{fact:KaSh}
If $P$ is the set of primes then $\Th(\cZ_{\pm P})$ is unstable and, assuming Dickson's Conjecture\footnote{Dickson's Conjecture, which concerns primes in arithmetic progressions, is a fairly strong conjecture in number theory, which implies several other conjectures and known results (e.g. infinitely many twin primes, infinitely many Sophie Germain primes, and the Green-Tao Theorem).}, $\Th(\cZ_{\pm P})$ is supersimple of $SU$-rank $1$. 
\end{fact}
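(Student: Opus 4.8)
I would prove this in two independent parts, following Kaplan and Shelah \cite{KaSh}: instability unconditionally, and supersimplicity together with the $SU$-rank computation under Dickson's conjecture. For the instability, the plan is to produce a single $\cL_{\pm P}$-formula with the order property, and I would use $y-x\in\pm P$. Since $\pm P$ is symmetric, Proposition \ref{prop:strcon} reduces the task to showing that $\pm P$ strongly contains arbitrarily long arithmetic progressions: for each $n$, an arithmetic progression of $n$ primes $q,q+d,\dots,q+(n-1)d$ whose backward reflection $q-d,q-2d,\dots,q-(n-1)d$ is disjoint from $\pm P$, i.e.\ with $|q-kd|$ composite or $\le 1$ for $1\le k\le n-1$. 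The forward progression is furnished by the Green--Tao theorem; the delicate part is arranging \emph{simultaneously} that all the reflected points are non-prime, which is carried out in \cite{KaSh} by controlling the small prime factors of the $q-kd$ (forcing each to be divisible by some fixed small prime via a covering-type system of congruences, compatibly with the progression itself avoiding small prime factors).

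For the conditional half, the plan is to realize $\pm P$ as a constrained generic subset of $\cZ=(\Z,+,0)$, in the style of Chatzidakis and Pillay \cite{ChzPi}. Working in $\cL=\{+,0,(\equiv_n)_{n\ge2}\}$ --- where $\Th(\cZ)$ has quantifier elimination and every definable set is a Boolean combination of cosets of subgroups (Fact \ref{fact:Z}) --- I would first establish a quantifier elimination for $\Th(\cZ_{\pm P})$, reducing every formula to a Boolean combination of $\cL$-formulas and formulas $\pm P(t(\xbar))$ for $\cL$-terms $t$. The heart of this is a density lemma: a finite system of the form ``$\pm P(t_i(\bbar,z))$ holds for $i\le j$, $\pm P(s_i(\bbar,z))$ fails for $i\le j'$, and $z$ realizes a prescribed congruence pattern'' is solvable for $z$ in $\cZ$ exactly when an explicit combinatorial admissibility condition on $\bbar$ holds. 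Dickson's conjecture enters in the nontrivial (``admissible $\Rightarrow$ solvable'') direction: after reducing modulo a suitable modulus one is left with finding a $z$ that makes finitely many linear forms simultaneously prime while finitely many others stay composite, which is exactly Dickson's conclusion once the absence of a local obstruction has been checked; the sign symmetry $z\mapsto -z$ (since $\pm P(w)$ means $w\in P$ or $-w\in P$) and the special prime $2$ are folded into the admissibility condition.

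With this in hand, $\Th(\cZ_{\pm P})$ is the model companion of the theory of expansions of $\cZ$ by a unary predicate subject to the admissibility axioms, so the Chatzidakis--Pillay analysis gives supersimplicity, with forking controlled by the $\cL$-reduct together with an independent generic choice of the $\pm P$-pattern. Since $\Th(\cZ)$ is superstable of $U$-rank $1$ (Fact \ref{fact:Z}), this forces $SU(\cZ_{\pm P})=1$: a complete $1$-type over a model $M$ implying that $x$ lies in the algebraic closure of $M$ computed in the $\cL$-reduct is algebraic, and any other $1$-type is determined by its $\cL$-reduct (of $U$-rank $1$ over $M$) plus a generic $\pm P$-pattern, so each of its forking extensions is algebraic; hence the rank is exactly $1$.

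The step I expect to be the main obstacle is the density lemma: identifying the correct admissibility condition --- one encoding the $z\mapsto -z$ symmetry, the role of the prime $2$, and the divisibility obstructions at every modulus --- and then deducing it from Dickson's conjecture by an elementary sieve argument. The unconditional instability is the other place where genuine number theory is needed, namely the backward-reflection construction; there one must control the arithmetic of the reflected points $q-kd$ while preserving primality of the progression $q,q+d,\dots,q+(n-1)d$, a simultaneous condition of exactly the kind that Dickson's conjecture would trivialize but which must be handled unconditionally here.
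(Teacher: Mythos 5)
First, a point of reference: the paper does not prove Fact \ref{fact:KaSh} at all --- it is imported as a black box from Kaplan and Shelah \cite{KaSh} --- so there is no internal argument to measure your proposal against, and what you have written must stand on its own. For the conditional half your outline is a reasonable reconstruction of the known strategy (a Dickson-powered realization lemma yielding quantifier elimination, a Chatzidakis--Pillay-style genericity analysis \cite{ChzPi} adapted to the congruence obstructions, and $SU$-rank $1$ inherited from $U(\cZ)=1$), but you correctly identify that the admissibility/density lemma is the entire mathematical content, and you leave it open; as a proof this half is a plan, not an argument.

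The genuine gap is in the unconditional instability. Your reduction via Proposition \ref{prop:strcon} requires that $\pm P$ strongly contain arbitrarily long arithmetic progressions: for every $n$, primes $q,q+d,\ldots,q+(n-1)d$ with $|q-kd|$ composite (or $\leq 1$) for all $1\leq k\leq n-1$. This does not follow from the Green--Tao theorem as you suggest. To force each $q-kd$ to have a small prime factor you must impose congruence conditions linking $q$ and $d$ (say $q\equiv kd \pmod {p_k}$ for distinct auxiliary primes $p_k>2n$, so as not to obstruct primality of $q+jd$), and you then need an arithmetic progression of primes with $q$ and $d$ lying in prescribed residue classes separately --- a statement about counting solutions to a linear system in primes, which requires the full Green--Tao--Ziegler machinery rather than the bare existence of long prime progressions. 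You defer exactly this step to \cite{KaSh}, but the ``strongly contained progression'' framework is Conant's own (Proposition \ref{prop:strcon} is proved in this paper, and the paper pointedly cannot even settle Question \ref{ques:stabAP} about long progressions in stable sets), so you should not assume the reflected-composite construction appears there; Kaplan and Shelah establish the order property for $\Pr(x+y)$ by their own route. As written, the one step of your instability argument that carries all the number theory is both unproved and misattributed.
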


 In general, if $A\seq\Z$ is stable then $\Th(\cZ_{\pm A})$ is stable, as $\pm A$ is definable in $\cZ_A$. There are also clearly unstable subsets $A$ such that $\Th(\cZ_{\pm A})$ is stable (e.g. $A=\N$). The following is an example of an unstable set $A\seq\N$ such that $\cZ_A$ and $\cZ_{\pm A}$ are interdefinable.

\begin{example}
Let $A$ be the set of squares (recall that $A$ is an asymptotic base and therefore unstable). We show that $A$ is definable in $\cZ_{\pm A}$. For any odd $n\in\N$, we have $n^2\equiv_4 1$ and $\nv n^2\equiv_4 3$. Thus $B:=(\pm A)\cap(4\Z+1)$, which is definable in $\cZ_{\pm A}$, is precisely the set of positive odd squares (so $B\seq\N$). By classical work of Gauss, $8\N+3\seq\Sigma_3(B)$ (see, e.g., \cite[Theorem 1.5]{NathB1}). By Theorem \ref{thm:stabudelta}, $\cZ_{\pm A}$ defines the ordering, and so $A$ is definable in $\pm A$.
\end{example}

\begin{problem}
Characterize the unstable subsets $A\seq\N$ such that $\Th(\cZ_{\pm A})$ is stable (or simple).
\end{problem}

\section*{Acknowledgements} 

I would like to thank Chris Laskowski, Daniel Palac\'{i}n, Rizos Sklinos, Kyle Gannon, and Leo Jimenez for several beneficial conversations. I also thank Bjorn Poonen for allowing me to use \cite{PooMO}, and Isaac Goldbring for allowing me to include Theorem \ref{thm:stabBD}.

A previous draft of this paper included the conjecture that if $A\seq\N$ is stable and $B\seq A$, then $B$ is stable. This has been removed due to a straightforward counterexample found by Jimenez. 

Independently of our work, Lambotte and Point \cite{LaPo} have also studied stable expansions of $(\Z,+,0)$ by unary predicates, and some of our examples of geometrically sparse sets described in Section \ref{sec:GS} overlap with examples considered in \cite{LaPo}.


\begin{thebibliography}{10}

\bibitem{BaBa}
Bektur Baizhanov and John~T. Baldwin, \emph{Local homogeneity}, J. Symbolic
  Logic \textbf{69} (2004), no.~4, 1243--1260. \MR{2135665}

\bibitem{BauFCP}
Andreas Baudisch, \emph{Magidor-{M}alitz quantifiers in modules}, J. Symbolic
  Logic \textbf{49} (1984), no.~1, 1--8. \MR{736597}

\bibitem{BPW}
Oleg Belegradek, Ya'acov Peterzil, and Frank Wagner, \emph{Quasi-o-minimal
  structures}, J. Symbolic Logic \textbf{65} (2000), no.~3, 1115--1132.
  \MR{1791366 (2001k:03079)}

\bibitem{PSbook}
M.-J. Bertin, A.~Decomps-Guilloux, M.~Grandet-Hugot, M.~Pathiaux-Delefosse, and
  J.-P. Schreiber, \emph{Pisot and {S}alem numbers}, Birkh\"auser Verlag,
  Basel, 1992, With a preface by David W. Boyd. \MR{1187044}

\bibitem{CaZi}
Enrique Casanovas and Martin Ziegler, \emph{Stable theories with a new
  predicate}, J. Symbolic Logic \textbf{66} (2001), no.~3, 1127--1140.
  \MR{1856732}

\bibitem{ChzPi}
Z.~Chatzidakis and A.~Pillay, \emph{Generic structures and simple theories},
  Ann. Pure Appl. Logic \textbf{95} (1998), no.~1-3, 71--92. \MR{1650667
  (2000c:03028)}

\bibitem{CoGG}
Gabriel Conant, \emph{Multiplicative structure in stable expansions of the
  group of integers}, arXiv:1704.00105, 2017.

\bibitem{ENS}
Paul Erd{\H{o}}s, Melvyn~B. Nathanson, and Andr{\'a}s S{\'a}rk{\"o}zy,
  \emph{Sumsets containing infinite arithmetic progressions}, J. Number Theory
  \textbf{28} (1988), no.~2, 159--166. \MR{927657}

\bibitem{EPSWbook}
Graham Everest, Alf van~der Poorten, Igor Shparlinski, and Thomas Ward,
  \emph{Recurrence sequences}, Mathematical Surveys and Monographs, vol. 104,
  American Mathematical Society, Providence, RI, 2003. \MR{1990179}

\bibitem{HilbertWaring}
David Hilbert, \emph{Beweis f\"ur die {D}arstellbarkeit der ganzen {Z}ahlen
  durch eine feste {A}nzahl {$n\sp {ter}$} {P}otenzen ({W}aringsches
  {P}roblem)}, Math. Ann. \textbf{67} (1909), no.~3, 281--300. \MR{1511530}

\bibitem{JinBD}
Renling Jin, \emph{Nonstandard methods for upper {B}anach density problems},
  Journal of Number Theory \textbf{91} (2001), no.~1, 20--38.

\bibitem{Kamke}
E.~Kamke, \emph{Verallgemeinerungen des {W}aring-{H}ilbertschen {S}atzes},
  Math. Ann. \textbf{83} (1921), no.~1-2, 85--112. \MR{1512001}

\bibitem{KaSh}
Itay Kaplan and Saharon Shelah, \emph{Decidability and classification of the
  theory of integers with primes}, arXiv:1601.07099, 2016, to appear in J.
  Symbolic Logic.

\bibitem{LaPo}
Quentin Lambotte and Fran\c{c}oise Point, \emph{On expansions of
  $(\mathbf{Z},+,0)$}, arXiv:1702.04795, 2017.

\bibitem{Mabook}
David Marker, \emph{Model theory}, Graduate Texts in Mathematics, vol. 217,
  Springer-Verlag, New York, 2002. \MR{1924282 (2003e:03060)}

\bibitem{MilesGF}
E.~P. Miles, Jr., \emph{Generalized {F}ibonacci numbers and associated
  matrices}, Amer. Math. Monthly \textbf{67} (1960), 745--752. \MR{0123521}

\bibitem{NaNa}
John C.~M. Nash and Melvyn~B. Nathanson, \emph{Cofinite subsets of asymptotic
  bases for the positive integers}, J. Number Theory \textbf{20} (1985), no.~3,
  363--372. \MR{797183}

\bibitem{NathB1}
Melvyn~B. Nathanson, \emph{Additive number theory. {T}he classical bases},
  Graduate Texts in Mathematics, vol. 164, Springer-Verlag, New York, 1996.
  \MR{1395371}

\bibitem{PaSk}
Daniel Palac\'{i}n and Rizos Sklinos, \emph{Superstable expansions of free
  abelian groups}, Notre Dame J. Form. Log. \textbf{59} (2018), no.~2,
  157--169.

\bibitem{PiGST}
Anand Pillay, \emph{Geometric stability theory}, Oxford Logic Guides, vol.~32,
  The Clarendon Press, Oxford University Press, New York, 1996, Oxford Science
  Publications. \MR{1429864}

\bibitem{Pobook}
Bruno Poizat, \emph{A course in model theory}, Universitext, Springer-Verlag,
  New York, 2000, An introduction to contemporary mathematical logic,
  Translated from the French by Moses Klein and revised by the author.
  \MR{1757487 (2001a:03072)}

\bibitem{PoStG}
\bysame, \emph{Stable groups}, Mathematical Surveys and Monographs, vol.~87,
  American Mathematical Society, Providence, RI, 2001, Translated from the 1987
  French original by Moses Gabriel Klein. \MR{1827833 (2002a:03067)}

\bibitem{PoZ}
\bysame, \emph{Superg\'en\'erix}, J. Algebra \textbf{404} (2014), 240--270,
  {\`A} la m{\'e}moire d'{\'E}ric Jaligot. [In memoriam {\'E}ric Jaligot].
  \MR{3177894}

\bibitem{PooMO}
Bjorn Poonen, \emph{Representing numbers in a non-integer base with few (but
  possibly negative) nonzero digits}, MathOverflow,
  URL:http://mathoverflow.net/q/12177 (version: 2010-01-18).

\bibitem{Prestbook}
Mike Prest, \emph{Model theory and modules}, Handbook of algebra, {V}ol. 3,
  North-Holland, Amsterdam, 2003, pp.~227--253. \MR{2035097}
  
\bibitem{SchLRS}
Wolfgang~M. Schmidt, \emph{Linear recurrence sequences}, Diophantine
  approximation ({C}etraro, 2000), Lecture Notes in Math., vol. 1819, Springer,
  Berlin, 2003, pp.~171--247. \MR{2009831}

\bibitem{Schn}
L.~Schnirelmann, \emph{\"{U}ber additive {E}igenschaften von {Z}ahlen}, Math.
  Ann. \textbf{107} (1933), no.~1, 649--690. \MR{1512821}

\bibitem{Szem}
E.~Szemer{\'e}di, \emph{On sets of integers containing no {$k$} elements in
  arithmetic progression}, Acta Arith. \textbf{27} (1975), 199--245, Collection
  of articles in memory of Juri{\u\i} Vladimirovi{\v{c}} Linnik. \MR{0369312}

\end{thebibliography}
\end{document}